\newtheorem{Remark}{Remark}
\newtheorem{thm}{Theorem}
\newtheorem{corollary}[thm]{Corollary}
\newtheorem{lemma}[thm]{Lemma}
\newtheorem{prop}[thm]{Proposition}
\newtheorem{assumption}{Assumption}
\newtheorem{defn}{Definition}%[section]
\newtheorem{exmp}{Example}%[section]
\def\ItemNN$#1${\item $\displaystyle#1$}
\newcommand{\itemEq}[1]{%
         \begingroup%
         \setlength{\abovedisplayskip}{0pt}%
         \setlength{\belowdisplayskip}{0pt}%
         \parbox[c]{\linewidth}{\begin{flalign}#1&&\end{flalign}}%
         \endgroup}
\DeclareMathOperator*{\argmin}{argmin}
\def\BState{\State\hskip-\ALG@thistlm}
\title{Projection in negative norms and the regularization of rough linear functionals}
\author{ F.~Millar $^a$       \and
        I.~Muga $^a$\and 
        S.~Rojas$^b$  \and
        K.G.~van der Zee$^c$
}
\date{\noindent $^a$ Instituto de Matem\'aticas,  
              Pontificia Universidad Cat\'olica de Valpara\'iso, Chile. 
              \\
              $^b$ School of Earth and Planetary Sciences, Curtin University, Australia.
                                       \\
           $^c$ School of Mathematical Sciences,
              University of Nottingham, UK. \\[2ex]  \today}
\begin{document}
\maketitle
\begin{abstract}
In order to construct regularizations of continuous linear functionals acting on Sobolev spaces such as $W_0^{1,q}(\Omega)$, where $1<q<\infty$ and $\Omega$~is a Lipschitz domain, we propose a projection method in negative Sobolev spaces~$W^{-1,p}(\Omega)$, $p$~being the conjugate exponent satisfying $p^{-1} + q^{-1} = 1$. Our method is particularly useful when one is dealing with a rough (irregular) functional that is a member of $W^{-1,p}(\Omega)$, though not of~$L^1(\Omega)$, but one strives for a regular approximation in~$L^1(\Omega)$. 
We focus on projections onto discrete finite element spaces~$G_n$, and consider both discontinuous as well as continuous piecewise-polynomial approximations.

While the proposed method aims to compute the best approximation as measured in the negative (dual) norm, for practical reasons, we will employ a computable, discrete dual norm that supremizes over a discrete subspace~$V_m$. We show that this idea leads to a fully discrete method given by a mixed problem on~$V_m\times G_n$. We propose a discontinuous as well as a continuous lowest-order pair, prove that they are compatible, and therefore obtain quasi-optimally convergent methods. 

We present numerical experiments that compute finite element approximations to Dirac delta's and line sources. We also present adaptively generate meshes, obtained from an error representation that comes with the method. Finally, we show how the presented projection method can be used to efficiently compute numerical approximations to partial differential equations with rough data. 

\end{abstract}

\setcounter{tocdepth}{3}
\tableofcontents
\newcommand{\fintro}{f} % \ell
%\marginnote{Title: Projection in negative norms? or projection using negative norms? or with negative norms?
%\\
%author: K.G. van der Zee, School of Mathematical Sciences, University of Nottingham, UK}
%
\section{Introduction}
In the approximation of solutions to partial differential equations (PDEs), the right-hand side data (e.g., sources) may not necessarily be representable by the action of an $L^2$ (or, more generally, $L^1$) function. We will refer to such functionals as being \emph{rough}, or irregular. For instance, 
rough linear functionals $\fintro$ acting on functions $v:\Omega\to\mathbb R$, with $\Omega$ being a $d$-dimensional Lipschitz domain, include:
\begin{enumerate}[label=(\roman*)]
\item Singular actions over derivatives:
\begin{equation*}
\fintro(v):=\int_{\Omega}\vec{F}\cdot \nabla v.
\tag{where $\vec{F}$ has some kind of singularity in~$\Omega$}
\end{equation*}
\item  Point sources, defined by a Dirac delta~$\delta_{(\cdot)}$ distribution or derivatives of it:
\begin{equation*}
\fintro(v):=\langle \delta_{x_0},v\rangle=v(x_0).\tag{for a given $x_0\in\Omega$}
\end{equation*}
\item  Line sources with density $\psi$:
\begin{equation*}
\fintro(v):=\int_{\textit{C}}\psi v.\tag{for a given a contour $C\subset\overline{\Omega}$}
\end{equation*}
\end{enumerate}
\par
There are several numerical complications when dealing with rough functionals:
\begin{itemize}
\item PDEs with rough data have low-regular solutions, which imply low convergence rates with quasi-uniform discretizations (e.g., finite element discretizations using uniformly-refined meshes);
\item Adaptive methods may recover optimal convergence rates (in terms of number of degrees of freedom), however, standard refinement indicators may not be valid or may be impractical (because of the data being rough, hence does not have an $L^2$ norm)~\cite{Cohen2012,NOCHETTO2013}; 
\item Software packages may not support the implementation of rough functionals,  but only facilitate standard domain integrals, i.e.,
\begin{alignat*}{2}
 \fintro(v) = \int_\Omega \phi v\,, \tag{for a given $\phi:\Omega\rightarrow \mathbb{R}$}
\end{alignat*}
to allow for an efficient quadrature treatment.
\end{itemize}
\par
A natural idea to overcome these complications is to employ \emph{regularizations} of the rough functional~$f$; cf.~Hosseini et al.~\cite{hosseini2016regularizations}.  
To explain the effect of regularizations of~$f$ on errors, consider the abstract linear problem 
\begin{alignat*}{2}
\mathcal Lu= f \quad  \text{in } V^*
\end{alignat*}
defined by a continuous and bounded below operator $\mathcal L:U\mapsto V^*$, where $U$ and $V$ are (trial and test) Banach spaces\footnote{This is the situation commonly encountered in variational formulations of PDEs, where $b(u,v)=\left<f,v\right>_{V^*,V}$ and $\mathcal Lu:=b(u,\cdot)$, for a given continuous bilinear form $b:U\times V\to\mathbb R$.}, and $V^*$ is the dual space of~$V$. Let $f_n\in V^*$ be a regularization of $f$ and let 
\begin{alignat*}{2}
  u_n:=\mathcal L^{-1}f_n
\end{alignat*}
be the exact solution for the regularized problem.  If $u_{n,h}$ is a numerical approximation to $u_n$, then by the triangular inequality:
\begin{equation}\label{eq:err_estimate}
\|u-u_{n,h}\|_{U}\leq \underbrace{\|u-u_n\|_{U}}_{\mbox{Regularization error}}+ \underbrace{\|u_n-u_{n,h}\|_{U}}_{\mbox{Discretization error}}.
\end{equation}
Assuming that the \emph{discretization error} can be controlled efficiently by standard adaptive procedures, the error estimate~\eqref{eq:err_estimate} will be dominated by the \emph{regularization error}, for which we know that 
\begin{alignat}{2}
\label{eq:regErrorBound}
  \|u-u_n\|_{U}\leq {\gamma}^{-1}\|f-f_n \|_{V^*}\,,
\end{alignat}
where $\gamma>0$ is the stability ($\inf$-$\sup$) constant of the operator $\mathcal L$. 
Thus, the focus of attention now is on how to control the error $\|f-f_n \|_{V^*}$ (if possible, up to a given accuracy). 
Notice that the data regularization error, $f-f_n $, is naturally measured in the dual norm~$\|\cdot\|_{V^*}$, which in typical situations corresponds to a negative Sobolev space norm.
\par
The main purpose of this paper is to propose and analyse a general methodology, in the wide context of Banach spaces, to construct a robust projection of~$f$ into a finite dimensional subspace~$G_n\subset V^*$. The projection~$f_n\in G_n$ is constructed to have the desirable qualities of being regular and being a near-best approximation to~$f$ (as measured by~$\|{\cdot}\|_{V^*}$).
We focus on projections onto discrete finite element spaces~$G_n\subset L^\infty(\Omega)$, and consider both discontinuous as well as continuous piecewise-polynomial approximations. Such projections~$f_n$ allow for exact integration of the usual finite element domain integrals $\int_\Omega f_n v\,$ via quadrature.%
\footnote{Note also that when using piecewise polynomial~$f_n$, conveniently, data oscillation may vanishes in standard a~posteriori error estimates, as used in adaptive FEM; see, e.g.,~\cite{CasKreNocSieSINUM2008,CasNocIMANA2011}.}
\par
Our methodology builds upon the discrete-dual minimal-residual (DDMRes) method in Banach spaces~\cite{MugTylZeeCMAM2019,MugVdZ_SINUM2020}. The principle behind this method is residual minimization in dual norms, the idea of which can be traced back to Discontinuous Petrov--Galerkin (DPG) methods~\cite{DemGopBOOK-CH2014}. Applied to the current setting, the problem is indeed to minimize~$\|f-g_n\|_{V^*}$ amongst~$g_n\in G_n$, which is nothing but a projection problem in dual (negative) norms. For computability reasons, the dual norm is replaced by a discrete dual norm $\|f-g_n\|_{(V_m)^*}$, where~$V_m$ is a suitable discrete subspace of~$V$.
\par
The main contributions of our work are as follows. 
By means of a mathematical object known as the \emph{duality map} (see Section~\ref{theory}), we prove the equivalence between the negative-norm projection problem and a monotone-mixed formulation that is suitable for finite element discretizations (Theorem~\ref{Teo:1}). The discrete (computable) counterpart of this monotone-mixed formulation is proved to be well-posed and lead to quasi-optimal convergence (Theorem~\ref{inexact:thm}) under a Fortin compatibility condition on~$G_n \times V_m$ (cf.~\cite{GopQiuMOC2014}). In other words, the discrete method delivers projections~$\tilde f_n\in G_n$ that are near-best to~$f\in V^*$, hence satisfy:
\begin{alignat*}{2}
  \|f-\tilde f_n\|_{V^*} \le C \inf_{g_n \in G_n} \|f-g_n\|_{V^*}\,.
\end{alignat*}
Moreover, the discrete method is shown to be equivalent to a best-approximation problem in a discrete-dual norm (Theorem~\ref{Thm:inexact-method}). 
\par
We furthermore propose lowest-order pairs of finite element spaces and prove their Fortin compatibility. The~$\mathbb{P}_0/(\mathbb{P}_1 + \mathrm{bubble})$ compatible pair (Proposition~\ref{Fortin_space}) uses a discontinuous piecewise-constant finite element space for~$G_n$ and continuous linears enriched with element bubbles for~$V_m$. The~$\mathbb{P}_1/\mathbb{P}_2$ compatible pair (Proposition~\ref{prop:compatible2}) uses a continuous piecewise-linear finite element space for~$G_n$ and continuous quadratics for~$V_m$.  
\par
The discrete method also has a built-in residual representative. We show that this leads to a natural a posteriori error estimator, which can be localized and employed to conduct adaptive mesh refinements showing outstanding convergence rates (see Section~\ref{applications}). Moreover, we have observed that flatter norms (i.e., $W^{-1,p}(\Omega)$-norms with exponents $p$ closer to $1$) induce a better localization of such mesh refinements.

\subsection{Instability when using the $L^2$~projection}\label{sec:motiv_example}
We wish to highlight that a naive $L^2$~projection for rough functionals may result in unexpected or unwanted results. We illustrate this with a simple 1-D example.
\par
Let $\Omega=(0,1)$ and consider the rough functional $\fintro\in H^{-1}(0,1):=\left(H^1_0(0,1)\right)^*$ defined by:
$$
\fintro(v):=\int_0^1 x^{-{1\over4}}v'(x)\, dx, \quad\forall v\in H^1_0(0,1).
$$
For a given small parameter $\epsilon>0$, we are going to approximate this funcional using the one dimensional space generated by the \emph{hat} function:
$$
\phi_\epsilon(x):=\left\{
\begin{array}{cl}
\displaystyle {x\epsilon^{-1}} & \mbox{if } x\in(0,\epsilon),\\\\
\displaystyle {(1-x)(1-\epsilon)^{-1}} & \mbox{if } x\in(\epsilon,1).
\end{array}\right.
$$
If we intend to compute the $L^2$-projection of the rough functional $\fintro(\cdot)$ onto the one-dimensional space generated by $\phi_\epsilon$ we arrive at the problem of finding $\alpha\in\mathbb R$ such that: 
$$
\alpha \|\phi_\epsilon\|_{L^2(0,1)}^2 = \int_0^1 x^{-{1\over4}} \phi_\epsilon'(x)\,dx
={1\over\epsilon}\int_0^\epsilon x^{-{1\over4}}dx - {1\over 1-\epsilon}\int_\epsilon^1 x^{-{1\over4}}dx.
$$
Notice that the right hand side of the above equation is of order $\epsilon^{-1/4}$ and goes to infinity as $\epsilon\to 0$. However, the $L^2$-norm of $\phi_\epsilon$ equals $\sqrt{3}/3$, irrespective of $\varepsilon$. Thus, the $L^2$-projection $\alpha\phi_\epsilon$ diverges as $\epsilon\to0$.

On another hand, we have computed the exact $H^{-1}$-projection of $\fintro$ onto the one-dimensional span of $\phi_\varepsilon$, together with a discrete $H^{-1}$-projection onto the same one-dimensional space, but using our proposed methodology with a $\mathbb P_2$ test space setting (see Section~\ref{sec:P1/P2} for further details). The $L^2$-norm of these best approximations are depicted in Figure~\ref{fig:L2norm_of_H-1projection}, and compared with the divergent $L^2$-projection. The $H^{-1}$ projections show stable behaviors as $\varepsilon\to0^+$.
%.
\begin{figure}
\centering
\includegraphics[scale=0.4]{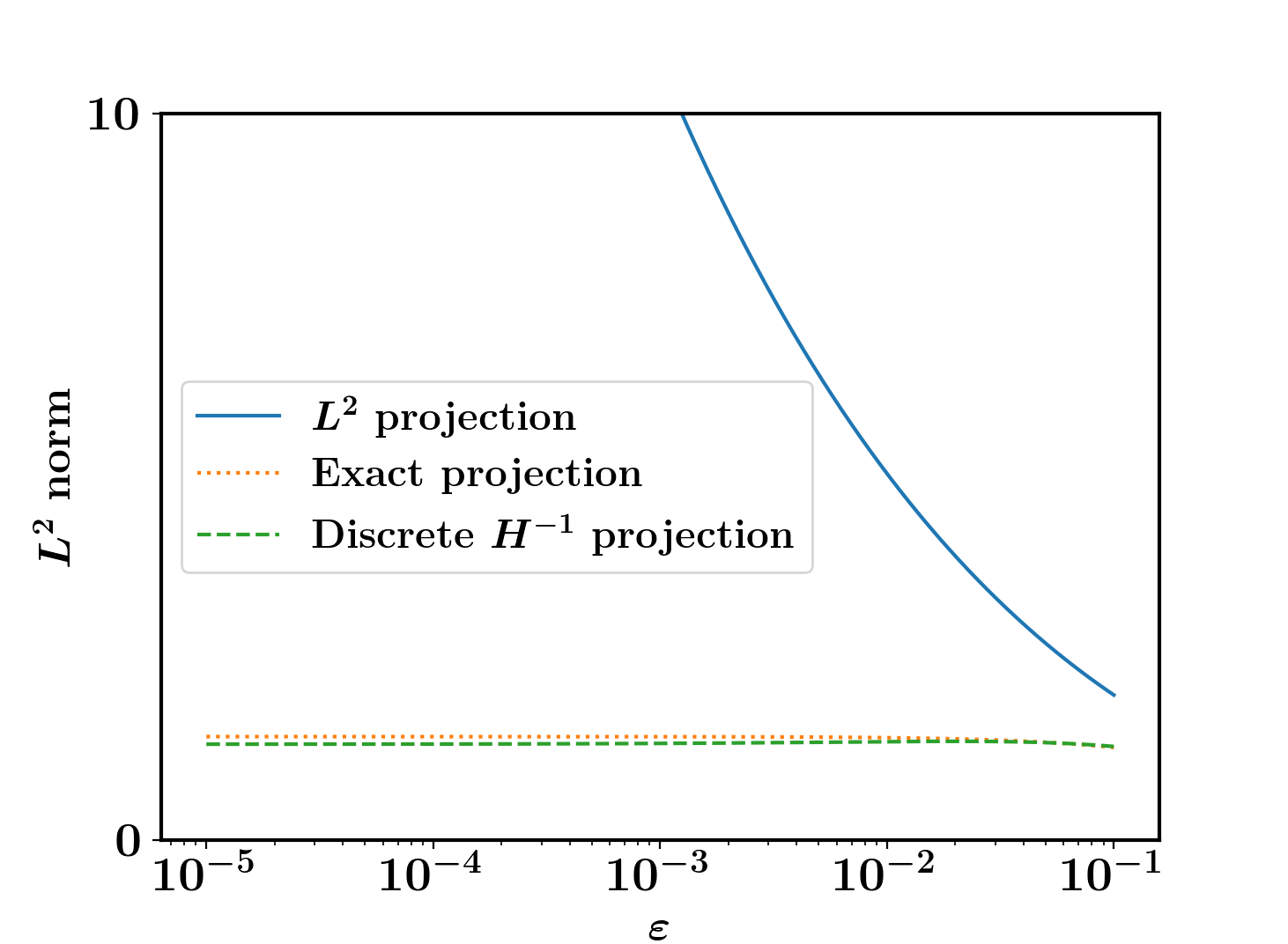}
\caption{$L^2$ norm of exact and discrete $H^{-1}$-projections of a rough functional over the one-dimensional space generated by $\phi_\epsilon$, for several small values of $\varepsilon>0$.
%, 
}
\label{fig:L2norm_of_H-1projection}
\end{figure}
\subsection{Related literature}
Solutions of second-order PDEs driven by rough sources may become singular. Indeed, it is well-known that the free-space solution of the Poisson problem:
 \begin{equation}\label{eq::poisson}
 -\Delta u=\delta_{x_0},
 \end{equation}
exhibits a $\log$-type singularity in two-dimensions; and a singular behavior of the type 
$1/\operatorname{dist}(x,x_0)$ in three-dimensions. In fact, for dimensions $d\geq 2$, the solution of~\eqref{eq::poisson} does not reach the Hilbert space $H^1(\Omega)$, mainly because the Dirac delta distribution is not in the dual space of $H^1(\Omega)$. Nevertheless, a regularized version of $\delta_{x_0}$ (e.g., an approximation represented by an $L^2(\Omega)$ function) would produce a regular solution of the Poisson problem, for which standard adaptive procedures work efficiently (see, e.g.,~\cite{Dorfler_SINUM1996,Verfurth_BOOK1996,MorNocSie_SINUM2000,BinDahDeV_NM2004,Stevenson2005,Stevenson2007}.

Rough or singular data has taken the attention of finite element analysts since the early works of Babu\v{s}ka~\cite{babuvska1971error} and Scott~\cite{scott1973finite}, where they analyzed \emph{a priori} error estimates and convergence rates for the Galerkin method applied to elliptic problems with singular source data, in the context of fractional Sobolev (Hilbert) norms $H^s$. In particular, Scott uses explicit regularizations of the delta distribution to estimate the regularization error. Later on, Eriksson \cite{eriksson1985improved} showed optimal convergence order in~$L^1$ and~$W^{1,1}$ norms, depending on adequate graded meshes adapted for Dirac delta right-hand sides. In a more general Banach-space setting, Casado-Diaz~et~al.~\cite{casado2007finite} proved the $W^{1,p}_0$-convergence and error estimates, for $1\leq p \leq{d}/({d-1})$, of piecewise linear polynomials approximating the solution of second order elliptic equations in divergence form with right-hand sides in $L^1$ (cf.~Example~\ref{ex:L1}). They also showed $W^{1,p}_0$-weak convergence when the right-hand side is a general Radon measure (see also~\cite{Clain_M3AS1995,GalHer_CRAS2004}). 

From the point of view of \textit{a posteriori} error analysis and adaptive finite element methods, rough right-hand sides in $H^{-1}$ has been considered in early works of Nochetto~\cite{NOCHETTO1995} and Stevenson~\cite{Stevenson2005,Stevenson2007}. The latter mainly rely on the approximability of $H^{-1}$ functionals by piecewise constants functions. A different approach was taken by Cohen~et~al.~\cite{Cohen2012}, where they provide $H^{-1}$-convergent algorithms directly using indicators based on local $H^{-1}$ norms. In the same spirit, Blechta~et~al.~\cite{BleMalVoh_IMAJNA2020} study the localization in negative norms $W^{-1,p}$ for \emph{a posteriori} error estimates purposes. 

Point sources have attracted major attention throughout the existing Literature. Recall that the exact solutions to these problems are commonly not encountered in standard Hilbert spaces like $H^1$. To overcome this drawback, we can distinguish adaptive approaches based on \emph{a posterior} error estimates in the natural $W^{1,p}$-setting of these equations (see, e.g.,~\cite{araya2006posteriori,araya2007adaptive,AloCamRodVal_CAMWA2014,fuica2019posteriori}); or approaches based on error estimates in fractional (and Hilbert) Sobolev norms $H^s$ (see, e.g.,~\cite{GazpozMorin2017}); or approaches based on weighted Muckenhoupt norms (see, e.g.,~\cite{agnelli2014posteriori,allendesdirac,AllOtaSal_SISC2020}). On another hand, we can also find methods based on mesh-grading techniques (see, e.g.,~\cite{ApeBenSirVex_SINUM2011,dangelo_SINUM2012}), and methods based on regularization techniques (see, e.g.,~\cite{walden1999approximation,tornberg2004numerical,SuaJacDon_SISC2014,BenVenPonTra_IJNME2014,hosseini2016regularizations,BenSym_JCP2019,HelLei_NM2020}). Many of the former results also apply for line sources (see, e.g.,~\cite{dangelo_SINUM2012,GjeKumNorWoh_M2AN2019}).

\subsection{Outline}
The outline of the paper is as follows. In Section \ref{section::2}, we present the preliminary concepts related to best approximation in Banach spaces and the functional analysis tools to be required. Section \ref{section::3} introduces the abstract regularization methodology of rough functionals as a best approximation problem (or projection problem) in dual norms, which in turns is equivalent to a monotone (non-linear) mixed method that can be approached numerically. We provide discrete well-posedness, stability, and {\it a priori} error analysis in Theorem~\ref{inexact:thm}.  Additionally,  {\it a posteriori} error analysis can be found in Theorem~\ref{thm:aposteriori}. In Section~\ref{sec:compatible_pairs}, we provide two trial/test compatible pairs for which our methodology is proved to be well-posed and stable. 
Due to their technicality, the proofs of these last results have been shifted to Appendix A and B, respectively.
In Section~\ref{applications}, we perform numerical experiment with point sources and line sources, together with solutions of PDEs with regularized source. Finally, we outline some conclusions and future work in Section~\ref{sec:conclusions}.

\section{Preliminaries}\label{section::2}

\subsection{Functional spaces and rough linear functionals}

Let  $(X,\| \cdot \|_{X})$  be  a normed space. The  dual space of $X$ (i.e., all the real valued continuous linear functionals  defined on $X$) will be denoted by $X^*$. The action of  $F\in X^*$ over  elements $x\in X$ will be denoted  by  a duality pairing  between $X^*$ and $X$, i.e.,
 $$\langle F,x\rangle_{X^*,X}:=F(x).$$
The norm of the dual space $X^*$ is defined by
\begin{equation}\label{eq::dualnorm}
 \|\cdot  \|_{X^*}:=\sup_{x\in X}\frac{\langle \cdot, x\rangle_{X^*,X}}{\|x \|_{X}}.
\end{equation}

Let $d\in\mathbb N$ denote a spatial dimension and let $\Omega\subset \mathbb{R}^d$ be a bounded Lipschitz domain.  Consider the standard Lebesgue space $L^q(\Omega)$ (for $q\geq 1$) and the Sobolev spaces:
\begin{equation}\label{Sobolev}
W^{1,q}(\Omega):= \{ v\in L^q(\Omega); \nabla v \in \left [ L^q(\Omega) \right]^d  \},
\end{equation}
 where  the $q^\text{th}$-power of the  norm is defined by
$$\|v\|^q_{W^{1,q}(\Omega)}:=\|v\|^q_{L^q(\Omega)}+\|\nabla v\|^q_{L ^q(\Omega)}.$$

Let   $\mathcal{C}^{\infty}_0(\Omega)$ be the space of smooth functions with compact support on $\Omega$, and consider  the subspace $W^{1,q}_0(\Omega)\subset W^{1,q}(\Omega)$ defined by:
\begin{equation*}
W^{1,q}_{0}(\Omega):=\overline{\mathcal{C}^\infty_0(\Omega)}^{\|\cdot\|_{W^{1,q}(\Omega)}}.
\end{equation*}
By Poincar\'e's inequality, it is well-known that $ \|\cdot\|_{W^{1,q}_0(\Omega)}:=\|\nabla (\cdot)\|_{L^q(\Omega)}$ defines an  equivalent norm on $W^{1,q}_0(\Omega)$. The dual of the Sobolev space $W^{1,q}_0(\Omega)$ will be denoted with a negative exponent, i.e.,
$$W^{-1,p}(\Omega):=\left(W_0^{1,q}(\Omega)\right)^*,$$ 
where  $p^{-1}+q^{-1}=1$. 
The associated dual norm is given accordingly to \eqref{eq::dualnorm}, in which case we talk about a  \emph{negative norm}. 
\begin{Remark}
The reader can refer to \cite{adams2003sobolev,DinPalVal_BSM2012} for the definition of Sobolev spaces $W^{s,q}(\Omega)$ with higher or even fractional derivatives of order $s>0$. In general, we will use the terminology \emph{negative norm} to denote the supremum norm~\eqref{eq::dualnorm} of the dual of a Sobolev space $W^{s,q}(\Omega)$, with $s>0$. 
\end{Remark}

The following examples, typify what we understand by irregular and rough linear functionals in negative Sobolev spaces.

\begin{exmp}[Irregular functions]\label{ex:L1}
For $f\in L^r(\Omega)$, with $r\geq 1$, the application $v\mapsto\int_\Omega fv$ defines a continuous linear functional over $L^q(\Omega)$, for any $q\geq r^*:=r/(r-1)$. However, it also defines a continuous linear functional over $W^{1,q}(\Omega)$, for a wider spectrum of values of $q\geq 1$. Indeed, one can show that 
$$\left | \int_\Omega fv \right |\leq \|f\|_{L^r(\Omega)}\|v\|_{L^{r^*}(\Omega)}\lesssim\|f\|_{L^r(\Omega)}\|v\|_{W^{1,q}(\Omega)},$$
provided the embedding $W^{1,q}(\Omega)\hookrightarrow L^{r^*}(\Omega)$ holds true continuously. This is always the case in one dimension, i.e., for $d=1$, the embedding holds true continuously for any $q\geq 1$. In higher dimensions extra assumptions are needed. For instance, if  
$\Omega\subset\mathbb R^d$ has the cone property (see \cite[Theorem~5.4]{adams2003sobolev}), then the embedding holds true continuously for any $q \geq {r^*d/ (r^*+d)}$. 
For example, if $f\in L^1(\Omega)$, but $f\notin L^{1+\epsilon}(\Omega)$, for any $\epsilon >0$, then $q\geq d$ is needed.
\end{exmp}

\begin{exmp}[Actions over derivatives]
Let  $g\in [L^r(\Omega)]^d$ with $r\geq 1$. Then the application  
\begin{equation*}
v\mapsto \int_{\Omega}g\cdot \nabla v,
\end{equation*} 
defines a continuous linear functional over $W^{1,q}(\Omega)$ for any $q\geq r^*:=r/(r-1)$.
\end{exmp}

\begin{exmp}[Point sources]\label{delta_example}
Let $\mathcal C(\Omega)$ be the space of continuous real-valued functions over $\Omega$.
For $x_0\in \Omega$, the application $v\mapsto v(x_0)$ defines a continuous linear functional over $W^{1,q}(\Omega)$ provided the embedding $W^{1,q}(\Omega)\hookrightarrow \mathcal C(\Omega)$ holds true continuously. The usual requirement for that is $q>d$ (see, e.g., Adams~\cite[Theorem~5.4]{adams2003sobolev}).
In such a case, this functional is known as the Dirac delta distribution centered at $x_0$, and we write
\begin{equation*}
\langle \delta_{x_0},v\rangle_{(W^{1,q})^*,W^{1,q}}: = v(x_0),\qquad \forall v\in W^{1,q}(\Omega).
\end{equation*}
\end{exmp}
\begin{exmp}[Line sources] 
\label{ex:line_sources}
For $d\geq 2$, let $\Gamma \subset \overline{\Omega}$ be a bounded Lipschitz curve, and  
let $\phi\in L^r(\Gamma)$. For any $q\geq \max\{d-1,r/(r-1)\}$, the linear application 
\begin{equation}\label{eq:line_source}
v\mapsto \int_\Gamma \phi v,
\end{equation}
defines a continuous linear functional over $W^{1,q}(\Omega)$. Indeed, in such a case we have 
$v\big|_\Gamma\in W^{1-{d-1\over q},q}(\Gamma)$. So the integral in~\eqref{eq:line_source} is well-defined and the whole process is continuous due to multiple applications of the trace Theorem. 
\end{exmp}

Of course, Example~\ref{ex:line_sources} can be extended to \emph{surface sources} in dimensions $d\geq 3$, and so on and so forth.

\subsection{Best approximations in Banach spaces}

For a given Banach space, the notion of projection into finite-dimensional subspaces is deeply related to the notion of \emph{best approximation}, which is formalized below in the general context of abstract normed spaces.
\begin{defn}[Best approximation]
Let  $X$ be  a normed space and consider a finite dimensional subspace $ X_n\subset X$. A \emph{best approximation} to  $x \in X $ in the finite dimensional space $X_n$, is an element $x_n\in X_n$  such that:
\begin{equation*}
\|x- x_n\|_{X} \leq \|x-z_n \|_{X},\qquad \forall z_n\in X_n.
\end{equation*}
\end{defn}

The following geometrical property of normed spaces will be necessary for the uniqueness of a best approximation (the existence of it, is due to the finite dimensionality of $X_n$; see Proposition~\ref{prop:best} below).
\begin{defn}[Strictly convex space]\label{def1:1}
A  normed  space $X$  is  strictly convex, if  for all $x_1,x_2\in X$,  such that $x_1\neq x_2$ and $\|x_1\|_X=\|x_2\|_X=1 $, it holds that:
\begin{equation*}
\|\alpha x_1+(1-\alpha)x_2\|_{X}<1,\qquad \forall \alpha \in (0,1).
\end{equation*} 
\end{defn}
\begin{Remark}
The Sobolev space $W^{1,q}(\Omega)$ defined in ~\eqref{Sobolev} is strictly convex if and only if $1<p<+\infty$. The same result holds true for the dual space $\big(W^{1,q}(\Omega)\big)^*$ (see, e.g.,~\cite{brezis2010functional,cioranescu2012geometry}).
\end{Remark}
\begin{prop}[Existence and uniqueness of a best approximation]\label{prop:best}
  Let $X$ be a Banach space and $X_n\subset X$ be a   finite-dimensional subspace. For any element in $X$, there exists at least one best approximation of it in $X_n$. 
In addition, if $X$ is a strictly convex Banach space (see Definition \ref{def1:1}), then such a best approximation is unique. Moreover, 
\begin{equation}\label{eq:rough_apriori_est}
\|x_n\|_X\leq 2\|x\|_X.
\end{equation}
\end{prop}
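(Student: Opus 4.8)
The plan is to establish the three claims in turn---existence, then uniqueness under strict convexity, then the a~priori bound~\eqref{eq:rough_apriori_est}---using only elementary properties of finite-dimensional normed spaces together with Definition~\ref{def1:1}.

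For \emph{existence}, I would set $d:=\inf_{z_n\in X_n}\|x-z_n\|_X$ and pick a minimizing sequence $(z_n^{(k)})_{k}\subset X_n$, i.e.\ $\|x-z_n^{(k)}\|_X\to d$. The triangle inequality gives $\|z_n^{(k)}\|_X\le \|x-z_n^{(k)}\|_X+\|x\|_X$, so the sequence is bounded. Since $X_n$ is finite-dimensional, all norms on it are equivalent and bounded closed subsets are compact (Heine--Borel), so a subsequence converges to some $x_n\in X_n$. Continuity of $z\mapsto\|x-z\|_X$ then yields $\|x-x_n\|_X=d$, so $x_n$ is a best approximation.

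For \emph{uniqueness}, assume $X$ is strictly convex and suppose $x_n^1\neq x_n^2$ are both best approximations, so $\|x-x_n^1\|_X=\|x-x_n^2\|_X=d$. If $d=0$ then $x=x_n^1=x_n^2$, a contradiction; hence $d>0$. Consider the midpoint $\tfrac12(x_n^1+x_n^2)\in X_n$ and write
\begin{equation*}
\frac{x-\tfrac12(x_n^1+x_n^2)}{d}=\frac12\,\frac{x-x_n^1}{d}+\frac12\,\frac{x-x_n^2}{d}.
\end{equation*}
The two vectors $(x-x_n^1)/d$ and $(x-x_n^2)/d$ have unit norm and are distinct (their difference is $(x_n^2-x_n^1)/d\neq 0$), so Definition~\ref{def1:1} with $\alpha=\tfrac12$ gives $\big\|x-\tfrac12(x_n^1+x_n^2)\big\|_X<d$, contradicting the definition of $d$. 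Hence $x_n^1=x_n^2$.

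For the bound, note $0\in X_n$, so any best approximation $x_n$ satisfies $\|x-x_n\|_X\le\|x-0\|_X=\|x\|_X$, and therefore $\|x_n\|_X\le\|x_n-x\|_X+\|x\|_X\le 2\|x\|_X$, which is~\eqref{eq:rough_apriori_est}. I do not anticipate a genuine obstacle here; the only point requiring care is invoking compactness of bounded closed sets in $X_n$, which rests on the equivalence of norms in finite dimensions, and making sure the strict-convexity step correctly excludes the degenerate case $d=0$ before normalizing.
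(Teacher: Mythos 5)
Your proposal is correct and is exactly the standard classical argument (existence by compactness of bounded sets in the finite-dimensional subspace, uniqueness from strict convexity applied to the normalized error vectors, and the bound $\|x_n\|_X\le 2\|x\|_X$ by comparing with $0\in X_n$), which is precisely what the paper invokes by citing the textbook result rather than proving it in-line. All steps, including the exclusion of the degenerate case $d=0$ before normalizing, are handled correctly, so nothing further is needed.
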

\begin{proof}
This is a classical result (see, e.g.,~\cite[section~10.2]{StakgoldHolds}).
\end{proof}
\begin{Remark}\label{rem:geometrical}
The estimate in~\eqref{eq:rough_apriori_est} is not sharp in general and can be improved using geometrical constants of the underlying Banach spaces (see \cite[section~3]{MugVdZ_SINUM2020}). Indeed, the reader may observe that in the Hilbert-space case, the constant in~\eqref{eq:rough_apriori_est} must be 1, since $x_n$ corresponds to the orthogonal projection of $x$.
\end{Remark}

\subsection{Duality maps}\label{theory}

The projection method that we are going to propose is based on operators called \textit{duality maps}, which allows to characterize \emph{best approximations} in a computable manner. We present a particular definition of such an operator in the context of strictly convex Banach spaces (cf.~\cite{brezis2010functional,cioranescu2012geometry}).

\begin{defn}[Duality Map]\label{dualitydef:1}
Let $X$ be a normed space and let us assume that its dual space $X^*$ is a strictly convex Banach space. For $s>1$, the duality map  $\mathcal{J}_{s,X}: X\mapsto X^* $ is the (unique)  operator satisfying:
\begin{itemize}
\item[i.] $\langle \mathcal{J}_{s,X}(x),x\rangle_{X^*,X}=\|\mathcal{J}_{s,X}(x)\|_{X^*}\|x\|_{X}$\\
\item[ii.] $\|\mathcal{J}_{s,X}(x)\|_{X^*}=\|x\|_{X}^{s-1}$.
 \end{itemize}
\end{defn}
\begin{Remark}\label{rem:duality}
The existence of the duality map $\mathcal J_{s,X}$ given in Definition~\ref{dualitydef:1} is guaranteed by the Hahn-Banach extension Theorem; while the uniqueness of it is due to the strict convexity of $X^*$ (see, e.g.~\cite{brezis2010functional,cioranescu2012geometry}). 
\end{Remark}
The following duality map identity is crucial for the characterization of \emph{best approximations} (see Corollary~\ref{coro:characterization} below).
\begin{prop}\label{dualitycharacterization}
Let $X$ be a  Banach space, such  that $X^*$ is strictly convex. Let  us consider  
 $\phi:X\rightarrow \mathbb{R}$, defined as $\phi(\cdot)=\frac{1}{s}\|\cdot\|_{X}^s$, with $s>1$. Then, $\phi$ is Gate\^{a}ux differentiable for all $x\in X$, and we have the following characterization for the duality map:
\begin{equation}\label{duality_grad}
\mathcal{J}_{s,X}(x)=\nabla{\phi}(x).
\end{equation}
\end{prop}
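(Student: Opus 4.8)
The plan is to show that the map $\phi(\cdot)=\tfrac1s\|\cdot\|_X^s$ is Gateaux differentiable and that its Gateaux gradient at $x$ coincides with $\mathcal J_{s,X}(x)$. First I would recall the chain-rule structure: $\phi=\psi\circ N$ where $N(x)=\|x\|_X$ and $\psi(t)=\tfrac1s t^s$. Since $\psi$ is classically differentiable on $(0,\infty)$ with $\psi'(t)=t^{s-1}$, the whole question reduces to the Gateaux differentiability of the norm $N$ away from the origin (the case $x=0$ is handled separately, where one checks directly that the difference quotient $\tfrac1{\tau s}\|\tau v\|_X^s = \tfrac{|\tau|^{s-1}}{s}\|v\|_X^s \to 0$, so $\nabla\phi(0)=0$, which is consistent with $\mathcal J_{s,X}(0)=0$).

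For $x\neq 0$, the main step is to prove that the norm is Gateaux differentiable at $x$ precisely because $X^*$ is strictly convex, and to identify the derivative. The classical fact (see Cioranescu or Brezis) is that $N$ is Gateaux differentiable at $x$ if and only if the normalized duality functional at $x$ is unique; strict convexity of $X^*$ guarantees that uniqueness. Concretely, I would take any $J\in X^*$ with $\|J\|_{X^*}=1$ and $\langle J,x\rangle = \|x\|_X$ (such $J$ exists by Hahn--Banach) and show that for every direction $v$,
\begin{equation*}
\lim_{\tau\to 0}\frac{\|x+\tau v\|_X-\|x\|_X}{\tau}=\langle J,v\rangle.
\end{equation*}
The upper bound $\tfrac{\|x+\tau v\|_X-\|x\|_X}{\tau}\le \langle J,v\rangle$ for $\tau>0$ (and the reverse for $\tau<0$) follows immediately from $\|x+\tau v\|_X\ge \langle J,x+\tau v\rangle=\|x\|_X+\tau\langle J,v\rangle$. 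The subtler inequality—that the one-sided limits from above and below agree—is where strict convexity of $X^*$ enters: if they did not agree, one could produce two distinct supporting functionals at $x$, contradicting uniqueness. I would phrase this via the standard monotonicity of the difference quotient (convexity of $N$) together with a weak-* compactness argument: any limit point of $(x+\tau v)$-normalized supporting functionals as $\tau\to 0$ is a supporting functional at $x$, hence equals $J$, forcing the limit to exist and equal $\langle J,v\rangle$.

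Having established $\nabla N(x)=J$, the chain rule gives $\nabla\phi(x)=\psi'(\|x\|_X)\,\nabla N(x)=\|x\|_X^{s-1}J$. It then remains to verify that $\|x\|_X^{s-1}J$ satisfies the two defining properties of $\mathcal J_{s,X}(x)$ in Definition~\ref{dualitydef:1}: property (ii), $\|\|x\|_X^{s-1}J\|_{X^*}=\|x\|_X^{s-1}\|J\|_{X^*}=\|x\|_X^{s-1}$, is immediate from $\|J\|_{X^*}=1$; and property (i), $\langle \|x\|_X^{s-1}J,x\rangle=\|x\|_X^{s-1}\langle J,x\rangle=\|x\|_X^{s}=\|\|x\|_X^{s-1}J\|_{X^*}\|x\|_X$, is equally direct. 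By the uniqueness of the duality map (Remark~\ref{rem:duality}), this forces $\nabla\phi(x)=\mathcal J_{s,X}(x)$, proving~\eqref{duality_grad}. The main obstacle is the norm-differentiability step for $x\neq 0$: turning strict convexity of $X^*$ into the coincidence of the two one-sided directional derivatives of the norm; everything else is bookkeeping with the chain rule and the defining identities.
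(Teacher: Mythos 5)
Your argument is correct and is essentially the classical proof that the paper does not reproduce: for Proposition~\ref{dualitycharacterization} the paper simply cites Cioranescu (chapter~1, section~2), and your sketch reconstructs that standard route---uniqueness of the norming functional $J$ at $x\neq 0$ from strict convexity of $X^*$, Gateaux differentiability of the norm via two one-sided bounds on the difference quotient, the chain rule with $\psi(t)=\tfrac1s t^s$, and identification of $\|x\|_X^{s-1}J$ with $\mathcal J_{s,X}(x)$ through the two defining properties and the uniqueness noted in Remark~\ref{rem:duality}. One bookkeeping slip: from $\|x+\tau v\|_X\ge\langle J,x+\tau v\rangle$ you get, for $\tau>0$, the \emph{lower} bound $\frac{\|x+\tau v\|_X-\|x\|_X}{\tau}\ge\langle J,v\rangle$ (and the upper bound for $\tau<0$), not the inequalities as you labelled them; the missing bounds from the other side are exactly what your weak-$*$ compactness argument supplies, since a norm-one $J_\tau$ supporting $x+\tau v$ gives $\frac{\|x+\tau v\|_X-\|x\|_X}{\tau}\le\langle J_\tau,v\rangle$ for $\tau>0$ and every weak-$*$ cluster point of $J_\tau$ as $\tau\to 0$ norms $x$, hence equals $J$. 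With that relabelling (and the harmless sign $\operatorname{sgn}(\tau)|\tau|^{s-1}$ in the $x=0$ case), the proof is complete and matches the cited reference.
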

\begin{proof}
See e.g. \cite[chapter~1, section~2]{cioranescu2012geometry}.
\end{proof}
As a consequence of Proposition~\ref{dualitycharacterization}, we have the following Corollary.
\begin{corollary}\label{coro:characterization}
Let $X$ be a Banach space such that $X^*$ is strictly convex. Let $X_n\subset X$ be a finite dimensional subspace. If $x_n\in X_n$ is a \emph{best approximation} of $x\in X$, then by first-order optimality conditions we have: 
$$
\left<\mathcal J_{s,X}(x-x_n),z_n\right>_{X^*,X}=\left<\nabla\left({1\over s}\|x-x_n\|_X^s\right),z_n\right>_{X^*,X}= 
0, \qquad \forall z_n\in X_n.
$$ 
\end{corollary}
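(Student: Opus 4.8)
The plan is to derive this as a direct corollary of Proposition~\ref{dualitycharacterization} together with the elementary first-order optimality principle for minimizers of differentiable functions restricted to a subspace. First I would observe that, by definition, a best approximation $x_n \in X_n$ minimizes the map $z_n \mapsto \|x - z_n\|_X$ over the finite-dimensional subspace $X_n$. Since $t \mapsto \frac{1}{s} t^s$ is strictly increasing on $[0,\infty)$ for $s > 1$, the element $x_n$ also minimizes the composed functional $\Phi : X_n \to \mathbb{R}$ given by $\Phi(z_n) := \frac{1}{s}\|x - z_n\|_X^s = \phi(x - z_n)$, where $\phi$ is the functional from Proposition~\ref{dualitycharacterization}.

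Next I would invoke the differentiability. By Proposition~\ref{dualitycharacterization}, $\phi$ is Gateaux differentiable at every point of $X$, with $\nabla\phi(y) = \mathcal{J}_{s,X}(y)$. Hence the affine precomposition $z_n \mapsto \phi(x - z_n)$ is Gateaux differentiable on $X_n$, and by the chain rule its derivative in a direction $z_n \in X_n$ equals $-\langle \mathcal{J}_{s,X}(x - x_n), z_n\rangle_{X^*,X}$ (the minus sign coming from differentiating $x - x_n - t z_n$ in $t$). Concretely, for fixed $z_n \in X_n$ define $\eta(t) := \phi(x - x_n - t z_n)$; this is a differentiable real function of $t \in \mathbb{R}$ attaining its minimum at $t = 0$ because $x_n - (-t z_n) = x_n + t z_n \in X_n$. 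Therefore $\eta'(0) = 0$, and computing $\eta'(0) = \langle \nabla\phi(x - x_n), -z_n\rangle_{X^*,X} = -\langle \mathcal{J}_{s,X}(x - x_n), z_n\rangle_{X^*,X}$ yields the claimed identity. Since $z_n \in X_n$ was arbitrary, the orthogonality-type relation holds for all $z_n \in X_n$.

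The only genuinely delicate point — and the one I would be most careful about — is the passage from Gateaux differentiability of $\phi$ to the validity of the one-dimensional first-order condition $\eta'(0) = 0$. This is not an obstacle in substance, since $\eta$ is a scalar function of a scalar variable and $t = 0$ is an interior minimizer, so classical calculus applies once we know $\eta$ is differentiable at $0$; and $\eta'(0)$ is exactly the Gateaux derivative of $\phi$ at $x - x_n$ in the direction $-z_n$, which exists by Proposition~\ref{dualitycharacterization}. One should note that existence (and, under strict convexity of $X$, uniqueness) of the best approximation $x_n$ is already guaranteed by Proposition~\ref{prop:best}, so the statement is non-vacuous. The final rewriting using $\mathcal{J}_{s,X}(x - x_n) = \nabla\left(\frac{1}{s}\|x - x_n\|_X^s\right)$ is then just a substitution of the characterization~\eqref{duality_grad}, which is what produces the displayed chain of equalities in the corollary.
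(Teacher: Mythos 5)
Your argument is correct and follows exactly the route the paper intends: the corollary is stated as a direct consequence of Proposition~\ref{dualitycharacterization} via first-order optimality, and your reduction to the scalar function $\eta(t)=\phi(x-x_n-tz_n)$ minimized at $t=0$, with $\eta'(0)$ given by the Gate\^aux derivative $\nabla\phi(x-x_n)=\mathcal J_{s,X}(x-x_n)$ in the direction $-z_n$, is precisely the implicit argument (the paper itself omits the details). No gaps; your care about differentiability of $\eta$ at $0$ and the monotone composition with $t\mapsto\frac{1}{s}t^s$ is exactly what is needed.
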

\begin{exmp}[Duality map of $W^{1,q}(\Omega)$]\label{duality::example1}
For $s=q>1$ and $X=W^{1,q}(\Omega)$, we have the following characterization of the duality map $\mathcal J_{W^{1,q}}:=\mathcal J_{s,X}$ (for any $v,w\in W^{1,q}(\Omega)$):
\begin{equation}\label{dualityW1p}
	\left< \mathcal{J}_{W^{1,q}}(v),w\right>_{(W^{1,q})^*,W^{1,q}}:= \int_{\Omega} |v |^{q-1}\hbox{sgn}(v)w + \sum_{i=1}^{d}\int_{\Omega} |\partial_i v|^{q-1}\hbox{sgn}\left(\partial_iv\right) \partial_i w\,.
\end{equation}
Observe that the duality map in \eqref{dualityW1p} is a non-linear operator, except for the Hilbert case $q=2$,   where the duality map coincides with the well-known Riesz map.
\end{exmp}
In general, the duality map of a dual space is difficult to compute in practice because of the supremum norm~\eqref{eq::dualnorm}. However, for smooth Banach spaces (i.e., when $X$ and $X^*$ are strictly convex and reflexive) we have the following helpful characterization.
\begin{prop}\label{lemma::duality_inverse} If $X$ and $X^*$ are strictly convex and reflexive Banach spaces, then the duality map is a bijection. Moreover, identifying $X^{**}$ with $X$, the following characterization holds true 
\begin{equation*}
\mathcal J_ {s^*,X^*} = \mathcal J_{s,X}^{-1},
\end{equation*}
where $s^*=s/(s-1)$.
\end{prop}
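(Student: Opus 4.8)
The plan is to prove the two claims separately: first that $\mathcal J_{s,X}$ is a bijection, and then that its inverse coincides with the duality map $\mathcal J_{s^*,X^*}$ on $X^*$. For the bijectivity, I would first recall that by Proposition~\ref{dualitycharacterization}, $\mathcal J_{s,X} = \nabla\phi$ with $\phi(\cdot) = \tfrac1s\|\cdot\|_X^s$, a strictly convex, coercive, Gateaux-differentiable functional on the reflexive space $X$. Injectivity follows from strict convexity of $\phi$ (which in turn follows from strict convexity of $X$ together with strict convexity of $t\mapsto t^s$): if $\mathcal J_{s,X}(x_1) = \mathcal J_{s,X}(x_2)$, the monotonicity inequality $\langle \nabla\phi(x_1)-\nabla\phi(x_2), x_1-x_2\rangle \ge 0$ is strict unless $x_1 = x_2$. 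Surjectivity is where reflexivity is essential: given $F \in X^*$, one minimizes $x \mapsto \phi(x) - \langle F, x\rangle_{X^*,X}$, which is coercive (since $\phi$ grows like $\|x\|_X^s$ with $s>1$) and weakly lower semicontinuous (convex and continuous), hence attains a minimum on the reflexive space $X$ by the direct method; the minimizer $x_0$ satisfies $\nabla\phi(x_0) = F$, i.e.\ $\mathcal J_{s,X}(x_0) = F$.

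Next I would establish the identity $\mathcal J_{s^*,X^*} = \mathcal J_{s,X}^{-1}$. Both sides are maps $X^* \to X$ (using the identification $X^{**} \cong X$, valid by reflexivity), and since $\mathcal J_{s,X}$ is a bijection it suffices to check that for every $x \in X$, setting $F := \mathcal J_{s,X}(x) \in X^*$, one has $\mathcal J_{s^*,X^*}(F) = x$. To do this I verify that $x$, viewed as an element of $X^{**}$, satisfies the two defining properties of $\mathcal J_{s^*,X^*}(F)$ from Definition~\ref{dualitydef:1}. For property (ii): $\|x\|_{X^{**}} = \|x\|_X$ and, from property (ii) of $\mathcal J_{s,X}$, $\|F\|_{X^*} = \|x\|_X^{s-1}$, so $\|F\|_{X^*}^{s^*-1} = \|x\|_X^{(s-1)(s^*-1)} = \|x\|_X$ since $(s-1)(s^*-1) = 1$ by the conjugacy relation $s^* = s/(s-1)$; hence $\|x\|_{X^{**}} = \|F\|_{X^*}^{s^*-1}$. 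For property (i): the pairing $\langle x, F\rangle_{X^{**},X^*}$ equals $\langle F, x\rangle_{X^*,X}$ under the canonical identification, which by property (i) of $\mathcal J_{s,X}$ equals $\|F\|_{X^*}\|x\|_X = \|F\|_{X^*}\|x\|_{X^{**}}$, exactly property (i) for $\mathcal J_{s^*,X^*}$ at $F$. By uniqueness of the duality map on $X^*$ (guaranteed since $X^{**} \cong X$ is strictly convex, per Remark~\ref{rem:duality}), we conclude $\mathcal J_{s^*,X^*}(F) = x$, i.e.\ $\mathcal J_{s^*,X^*} \circ \mathcal J_{s,X} = \mathrm{id}_X$. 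Combined with bijectivity this gives $\mathcal J_{s^*,X^*} = \mathcal J_{s,X}^{-1}$.

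The main obstacle I anticipate is the surjectivity part of the bijectivity claim: it is the only place where reflexivity does real work, and it requires invoking the direct method of the calculus of variations (coercivity plus weak lower semicontinuity on a reflexive space), together with the identification that the first-order optimality condition for the minimization of $\phi(\cdot) - \langle F,\cdot\rangle$ is precisely $\nabla\phi = F$. The rest — injectivity via strict monotonicity, and the inverse identity via bookkeeping with the exponents $s$, $s^*$, $s-1$, $s^*-1$ and the canonical embedding $X \hookrightarrow X^{**}$ — is essentially routine once the defining properties in Definition~\ref{dualitydef:1} are matched up carefully. One should also take care to state explicitly that strict convexity of $X$ (not just of $X^*$) is what makes $\phi$ strictly convex and hence $\mathcal J_{s,X}$ injective; this hypothesis is present in the statement.
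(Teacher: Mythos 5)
Your argument is correct, and the second half — verifying that $x:=\mathcal J_{s,X}^{-1}(F)$, viewed in $X^{**}\cong X$, satisfies the two defining properties of Definition~\ref{dualitydef:1} for $\mathcal J_{s^*,X^*}$ at $F$ (with the exponent bookkeeping $(s-1)(s^*-1)=1$) and then invoking uniqueness from strict convexity of $X\cong X^{**}$ — is exactly the route the paper takes. The only place you diverge is the bijectivity of $\mathcal J_{s,X}$: the paper simply cites the standard results (reflexivity gives surjectivity, strict convexity of $X$ gives injectivity), whereas you supply a self-contained proof via Proposition~\ref{dualitycharacterization}, getting injectivity from strict monotonicity of $\nabla\phi$ (strict convexity of $\phi(\cdot)=\tfrac1s\|\cdot\|_X^s$ when $X$ is strictly convex and $s>1$) and surjectivity by minimizing $\phi(\cdot)-\langle F,\cdot\rangle_{X^*,X}$ with the direct method (coercivity since $s>1$, weak lower semicontinuity from convexity and continuity, reflexivity for weak compactness) and reading off the Euler equation $\nabla\phi(x_0)=F$. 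Both are valid; the paper's citation is shorter, while your variational argument makes transparent precisely where reflexivity and the strict convexity of $X$ (as opposed to that of $X^*$) enter, which you rightly flag.
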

\begin{proof}
Reflexivity implies surjectivity of duality maps (see~\cite[chapter~II, theorem~3.4]{cioranescu2012geometry}), while strict convexity of $X$ implies injectivity of them (see ~\cite[chapter~II, theorem~1.10]{cioranescu2012geometry}). 

Let $x^*\in X^*$ and let $x=\mathcal J_{s,X}^{-1}(x^*)$. Identifying $X^{**}$ with $X$, we notice that:
$$
\left<x,x^*\right>_{X^{**},X^*}=\left<x^*,x\right>_{X^{*},X}=\left<\mathcal J_{s,X}(x),x\right>_{X^{*},X}
=\|\mathcal J_{s,X}(x)\|_{X^*}\|x\|_X=\|x^*\|_{X^*}\|x\|_{X^{**}},
$$
which implies that $x$ satisfies the first requirement of Definition~\ref{dualitydef:1}.
Moreover,
$$
\|x\|_{X^{**}}^s=\|x\|_X^s=\|x\|_X^{s^*(s-1)}=\|\mathcal{J}_{s,X}(x)\|_{X^*}^{s^*}=\|x^*\|_{X^*}^{s^*}\,,
$$
which implies that $x$ satisfies the second requirement of Definition~\ref{dualitydef:1}. Hence, by uniqueness (see Remark~\ref{rem:duality}), we must have $x=\mathcal J_ {s^*,X^*}(x^*)$.
\end{proof}

\section{The proposed projection methodology}\label{section::3}
\subsection{Exact projection in dual norms}
In this section, we establish a methodology to construct  regularizations of  functionals belonging to a dual Banach space, as the best-approximation of them over a given finite-dimensional subspace. For that, we show that the best-approximation problem is equivalent to a monotone mixed formulation, where a residual representative is introduced as a new unknown.
\begin{thm}\label{Teo:1}
 Assume that $V$ and $V^*$ are  strictly convex and reflexive Banach spaces, and  let us consider a finite dimensional subspace $G_n\subset V^*$.  Let  $\mathcal{J}_{s,V}:V\mapsto V^*$ be  the duality map of Definition~\ref{dualitydef:1}. Given $f\in V^*$, the following statements are equivalent:
\begin{itemize}
\item[i.] $f_n\in G_n$ is the unique best approximation satisfying
\begin{equation}\label{mini}
f_n=\argmin_{g_n\in G_n}\| f-g_n  \|_{V^*}.\\
\end{equation}
\item[ii.] There is a unique residual  representative $r\in V$, such that $(r,f_n)\in V\times G_n$ satisfy the semi-infinite monotone mixed formulation:
\begin{equation}\label{semi-infinite}
\left\{ \begin{array}{lrcll}
\langle \mathcal{J}_{s,V}(r),v\rangle_{V^*,V}&+\langle f_n,v\rangle_{V^*,V}&=&\langle f,v\rangle_{V^*,V},\qquad &\forall v\in V,\\
\langle g_n,r\rangle_{V^*,V}&&=&0,\qquad  &\forall g_n\in G_n.
\end{array}\right.
\end{equation}
\end{itemize}
\end{thm}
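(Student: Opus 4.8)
The plan is to prove the equivalence by establishing each implication separately, using the duality-map characterization from Corollary~\ref{coro:characterization} as the bridge between the two formulations.

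\textbf{Step 1: existence and uniqueness of the best approximation.} First I would invoke Proposition~\ref{prop:best}: since $G_n$ is finite-dimensional and $V^*$ is strictly convex (by hypothesis), there exists a unique minimizer $f_n\in G_n$ of $\|f-g_n\|_{V^*}$. This disposes of the well-posedness part of statement (i).

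\textbf{Step 2: from (i) to (ii).} Given the best approximation $f_n$, define the residual representative by $r:=\mathcal J_{s^*,V^*}(f-f_n)$, where $s^*=s/(s-1)$; this is well-defined and lives in $V^{**}\cong V$ because $V^*$ is strictly convex and reflexive, so Proposition~\ref{lemma::duality_inverse} applies. Equivalently, by that same proposition, $\mathcal J_{s,V}(r) = f - f_n$, which is precisely the first equation of~\eqref{semi-infinite} rearranged. For the second equation, I would apply Corollary~\ref{coro:characterization} to $X=V^*$, $X_n = G_n$, $x = f$, $x_n = f_n$: the first-order optimality condition gives $\langle \mathcal J_{s^*,V^*}(f-f_n), g_n\rangle_{V^{**},V^*} = 0$ for all $g_n\in G_n$, i.e. $\langle g_n, r\rangle_{V^*,V}=0$. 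Uniqueness of $r$ follows from injectivity of $\mathcal J_{s^*,V^*}$ (strict convexity of $V^*$, via Proposition~\ref{lemma::duality_inverse}) together with uniqueness of $f_n$.

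\textbf{Step 3: from (ii) to (i).} Conversely, suppose $(r,f_n)\in V\times G_n$ solves~\eqref{semi-infinite}. The first equation says $\mathcal J_{s,V}(r) = f - f_n$ in $V^*$; applying the inverse duality map (Proposition~\ref{lemma::duality_inverse}) gives $r = \mathcal J_{s^*,V^*}(f-f_n)$. Substituting into the second equation yields $\langle \mathcal J_{s^*,V^*}(f-f_n), g_n\rangle = 0$ for all $g_n\in G_n$, i.e. $\nabla\bigl(\tfrac1{s^*}\|f-\cdot\|_{V^*}^{s^*}\bigr)$ evaluated at $f_n$ annihilates $G_n$. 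Since $g\mapsto \tfrac1{s^*}\|f-g\|_{V^*}^{s^*}$ is convex (composition of a norm with an increasing convex power) and Gâteaux differentiable on the subspace $G_n$ by Proposition~\ref{dualitycharacterization}, this stationarity condition is both necessary and sufficient for $f_n$ to be the global minimizer over $G_n$; hence $f_n$ is the best approximation, and uniqueness comes again from strict convexity of $V^*$.

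\textbf{Main obstacle.} The delicate point is the rigorous handling of the inverse duality map and the identification $V^{**}\cong V$: one must be careful that $r$ obtained from the first equation of~\eqref{semi-infinite} genuinely lies in $V$ (not merely $V^{**}$) and that the pairings $\langle g_n, r\rangle_{V^*,V}$ and $\langle r, g_n\rangle_{V^{**},V^*}$ coincide under this identification — this is exactly where reflexivity of $V$ is used, and it is the hypothesis that makes Proposition~\ref{lemma::duality_inverse} applicable in both directions. A secondary technical care is justifying that the first-order optimality condition is \emph{sufficient} (not just necessary) for the minimizer, which relies on the convexity of the composed functional; I would state this explicitly rather than leave it implicit.
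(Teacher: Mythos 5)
Your argument is correct, and for the implication (i)$\Rightarrow$(ii) it is essentially the paper's own proof: invoke Proposition~\ref{prop:best} for existence/uniqueness of $f_n$, apply the optimality condition of Corollary~\ref{coro:characterization} in $X=V^*$, and use Proposition~\ref{lemma::duality_inverse} to identify $\mathcal J_{s^*,V^*}(f-f_n)$ with $\mathcal J_{s,V}^{-1}(f-f_n)=:r\in V$, which yields both equations of~\eqref{semi-infinite}. Where you go beyond the text is the converse (ii)$\Rightarrow$(i): the paper does not prove it in-line but delegates the full equivalence to the cited general result \cite[Theorem 3.B]{MugVdZ_ARXIV2018} for boundedly invertible operators $B:U\to V^*$, specialized to $B=\mathrm{Id}$. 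Your self-contained route — read off $\mathcal J_{s,V}(r)=f-f_n$ from the first equation, invert the duality map, and then upgrade the stationarity condition $\langle \mathcal J_{s^*,V^*}(f-f_n),g_n\rangle=0$ to global optimality using convexity and G\^ateaux differentiability of $g\mapsto \tfrac{1}{s^*}\|f-g\|_{V^*}^{s^*}$ (Proposition~\ref{dualitycharacterization}, which needs $V^{**}\cong V$ strictly convex, as guaranteed by the hypotheses) — is sound and makes the proof independent of the external reference, at the modest cost of having to justify sufficiency of the first-order condition explicitly, which you rightly flag. Your uniqueness bookkeeping (uniqueness of $f_n$ from strict convexity of $V^*$, uniqueness of $r$ from injectivity of the duality map, and the converse direction guaranteeing that any solution's $G_n$-component is the best approximation) closes the equivalence correctly.
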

\begin{proof}
A general proof is given in \cite[Theorem 3.B]{MugVdZ_ARXIV2018} for a wider class of boundedly invertible operators $B:U\to V^*$ (where $U$ is another Banach space), but using the particular choice of duality map 
$\mathcal J_{2,V}$. It is straightforward to accommodate that proof to the case where $U=V^*$, $B$ is the identity operator in $V^*$, and $\mathcal J_{s,V}$ is any duality map fulfilling Definition~\ref{dualitydef:1}. Indeed, just for illustrating we will give a proof of how~\eqref{mini} implies~\eqref{semi-infinite}.

Let $f_n$ be the best-approximation satisfying~\eqref{mini}, which is guaranteed by Proposition~\ref{prop:best}. 
Consider the duality map $\mathcal J_{s^*,V^*}:V^*\mapsto V^{**}$, where $s^*=s/(s-1)$. By Corollary~\ref{coro:characterization} and Proposition~\ref{lemma::duality_inverse}, we have:
\begin{equation}\label{thm::eq:1}
0= \langle \mathcal J_{s^*,V^*}(f-f_n), g_n \rangle_{V^{**},V^*}=\langle g_n,\mathcal J^{-1}_{s,V} (f-f_n)\rangle_{V^*,V} \,, \qquad \forall g_n\in G_n.
\end{equation}
Defining the variable $r:=  \mathcal J^{-1}_{s,V} (f-f_n) \in V$ and plugging it into eq.~\eqref{thm::eq:1} we obtain the second equation of the mixed system~\eqref{semi-infinite}. Moreover, since 
$\mathcal J_{s,V} (r)=f-f_n \in V^*$, we also obtain the first equation of~\eqref{semi-infinite}.
\end{proof}
\begin{Remark}
Using the definition of the duality map (see Definition \ref{dualitydef:1}), we get the following relation between the residual representative and the \emph{best-approximation error}:
\begin{equation}\label{residualestimation}
\|f-f_n\|_{V^*}=\|r\|^{s-1}_{V},\qquad \mbox{ for } s>1.
\end{equation}
Notice that the residual representative $r= \mathcal J^{-1}_{s,V} (f-f_n)$ depends on the choice of the duality map (i.e., the parameter $s>1$), while the best-approximation $f_n$ is independent of that choice.
\end{Remark}
\subsection{The fully-discrete practical method}
The monotone mixed formulation~\eqref{semi-infinite} is still intractable for computational purposes unless $V$ has finite dimension. The standard way to overcome this drawback is to consider a finite dimensional subspace  $V_m\subset V$ and try to compute the following fully-discrete mixed problem:
\begin{equation}\label{fully1}
\left\{ \begin{array}{lll}
\mbox{Find $(r_m,\tilde f_n)\in V_m\times G_n$ such that}\\
\langle \mathcal{J}_{s,V}(r_m),v_m\rangle_{V^*,V}+\langle \tilde f_n,v_m\rangle_{V^*,V} & =  \langle f,v_m\rangle_{V^*,V}\,, & \quad\forall v_m\in V_m,\\
\langle g_n,r_m\rangle_{V^*,V} & = 0, & \quad\forall g_n\in G_n.
\end{array}\right.
\end{equation}
Observe that we have used the notation $\tilde f_n$ to distinguish between the solution of~\eqref{fully1} and the solution $f_n$ of the semi-infinite mixed system~\eqref{semi-infinite}, or equivalently, the best-approximation~\eqref{mini}.\\

Of course, many questions arise now:
\begin{itemize}
\item Is also $\tilde f_n\in G_n$ a best-approximation to $f$ in some sense?
\item Is the fully discrete mixed problem~\eqref{fully1} well-posed?
\item Is the solution $\tilde f_n$ quasi-optimal in the sense that $\|f-\tilde f_n\|_{V^*}\lesssim \|f-f_n\|_{V^*}$?
\item Is possible to use $\|r_m\|_V$ as a reliable and efficient error estimate to drive adaptivity?
\end{itemize}
The answer to these queries will guide the following theorems.

\begin{thm}\label{Thm:inexact-method} 
 Assume that $V$ and $V^*$ are strictly convex and reflexive Banach spaces.  Let $f\in V^*$ and consider finite dimensional approximation spaces $G_n\subset V^*$ and $V_m \subset V$. 
A discrete functional $\tilde f_n\in G_n$ solves the fully-discrete mixed system~\eqref{fully1} (together with $r_m\in V_m$), if and only if, $\tilde f_n$ is a best-approximation to $f\in V^*$ in the following sense:
\begin{equation}\label{eq:discrete_min}
\tilde f_n=\argmin_{g_n\in G_n} \| f-g_n\|_{(V_m)^*}\,,\quad \mbox{ where } 
\|\cdot\|_{(V_m)^*}:=\sup_{v_m\in V_m}\frac{\langle \,\cdot\,, v_m\rangle_{V* ,V}}{\|v_m\|_{V}}.
\end{equation}
\end{thm}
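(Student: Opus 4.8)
The plan is to exploit the same duality-map machinery used in Theorem~\ref{Teo:1}, but now applied to the finite-dimensional space $V_m$ in place of $V$. The key observation is that the first equation of the fully-discrete system~\eqref{fully1} is a statement about the functional $f - \tilde f_n$ restricted to $V_m$: it says precisely that $\langle f - \tilde f_n, v_m\rangle_{V^*,V} = \langle \mathcal J_{s,V}(r_m), v_m\rangle_{V^*,V}$ for all $v_m \in V_m$, i.e., the restriction $(f-\tilde f_n)\big|_{V_m} \in (V_m)^*$ is represented by $\mathcal J_{s,V}(r_m)$ through the pairing on $V_m$. Meanwhile the second equation says $r_m \in (G_n)^{\perp}$ in the sense that $\langle g_n, r_m\rangle = 0$ for all $g_n \in G_n$. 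So the structure is identical to~\eqref{semi-infinite}, only with $V$ replaced by the finite-dimensional $V_m$ and with the caveat that the restriction map $V^* \to (V_m)^*$ intervenes.

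First I would set up the finite-dimensional framework: since $V_m$ is finite-dimensional it is reflexive, and I will need $(V_m)^*$ to be strictly convex so that the best approximation in~\eqref{eq:discrete_min} is unique and so that a duality map on $(V_m)^*$ is available. The cleanest route is to equip $V_m$ with the norm inherited from $V$ and note that, because $V^*$ (hence $V$, by reflexivity) is strictly convex, $V_m$ with this norm is strictly convex; one should check that this transfers to strict convexity of $(V_m)^*$ — this is automatic in finite dimensions once $V_m$ is strictly convex and smooth, but the safest statement to invoke is that the relevant duality map $\mathcal J_{s,V_m}$ and its inverse exist (Proposition~\ref{lemma::duality_inverse} applied to the pair $V_m, (V_m)^*$). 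Then I would apply Theorem~\ref{Teo:1} verbatim, with $V \rightsquigarrow V_m$, $V^* \rightsquigarrow (V_m)^*$, the same $G_n$ (now viewed inside $(V_m)^*$ via restriction), and the functional $f\big|_{V_m} \in (V_m)^*$ in place of $f$. That theorem gives: $\tilde f_n$ minimizes $\|f\big|_{V_m} - g_n\|_{(V_m)^*}$ over $g_n \in G_n$ if and only if there is $r \in V_m$ with $\langle \mathcal J_{s,V_m}(r), v_m\rangle + \langle \tilde f_n, v_m\rangle = \langle f, v_m\rangle$ for all $v_m\in V_m$ and $\langle g_n, r\rangle = 0$ for all $g_n \in G_n$.

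The remaining work is to reconcile $\mathcal J_{s,V}$ in~\eqref{fully1} with $\mathcal J_{s,V_m}$ coming out of the abstract theorem, and to reconcile $\|\cdot\|_{(V_m)^*}$ as defined in~\eqref{eq:discrete_min} (a supremum over $V_m$ of the $V^*$-$V$ pairing) with the genuine dual norm of $(V_m)^*$. For the latter, these coincide by definition once $V_m$ carries the $V$-norm, so that is immediate. For the former, one checks that for $r \in V_m$ the element $\mathcal J_{s,V_m}(r) \in (V_m)^*$ acts on $v_m$ exactly as $\mathcal J_{s,V}(r)$ does — but this is \emph{not} generally true, since the two duality maps satisfy normalization conditions with respect to different norms only when $\|r\|_{V_m} = \|r\|_V$, which holds, and when the supporting functional can be taken the same, which it can be \emph{after restriction}. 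The honest resolution: in~\eqref{fully1} only the action of $\mathcal J_{s,V}(r_m)$ on $V_m$ matters, and $v_m \mapsto \langle \mathcal J_{s,V}(r_m), v_m\rangle$ is a functional on $V_m$ satisfying property (i) of Definition~\ref{dualitydef:1} at $r_m$ (with the $V$-norm $=$ $V_m$-norm) and having $(V_m)^*$-norm at most $\|r_m\|_V^{s-1}$; by uniqueness of the duality map on $V_m$ it must equal $\mathcal J_{s,V_m}(r_m)$. I expect this identification — showing the restriction of $\mathcal J_{s,V}$ to $V_m$ agrees with $\mathcal J_{s,V_m}$ when testing against $V_m$ — to be the main (and only real) obstacle; everything else is a direct transcription of Theorem~\ref{Teo:1} to the finite-dimensional setting. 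Once that is in hand, setting $r = r_m$ closes the equivalence in both directions.
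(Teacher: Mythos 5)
Your proposal is correct in substance, but it takes a different route from the paper: the paper does not prove Theorem~\ref{Thm:inexact-method} in-text at all, it simply cites Theorem~4.1 of~\cite{MugVdZ_SINUM2020}, where the result is established in the more general setting of an operator $B:U\to V^*$ (the same framework invoked for Theorem~\ref{Teo:1}). You instead give a self-contained reduction: transcribe Theorem~\ref{Teo:1} with $V$ replaced by $V_m$ (equipped with the inherited norm), replace $G_n$ by its image under restriction to $V_m$, and observe that the paper's discrete norm $\|\cdot\|_{(V_m)^*}$ is exactly the dual norm of that restriction. The hypotheses transfer as you say: $V_m$ is strictly convex and reflexive, and $(V_m)^*$ is strictly convex because $V^*$ strictly convex makes $V$ (hence the subspace $V_m$) smooth, and smoothness plus finite-dimensional reflexivity gives strict convexity of the dual. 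Your identified crux — that $v_m\mapsto\langle\mathcal J_{s,V}(r_m),v_m\rangle$ coincides with $\mathcal J_{s,V_m}(r_m)$ — is the right key lemma, but your justification is slightly loose: you cannot assert property (i) of Definition~\ref{dualitydef:1} for the restricted functional before knowing its $(V_m)^*$-norm. The fix is one line: writing $\ell$ for the restriction, $\|r_m\|_V^{\,s}=\langle\ell,r_m\rangle\le\|\ell\|_{(V_m)^*}\|r_m\|_V\le\|r_m\|_V^{\,s-1}\|r_m\|_V$, so equality holds throughout, giving both (i) and (ii), and uniqueness of the duality map on $V_m$ then yields $\ell=\mathcal J_{s,V_m}(r_m)$. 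One further bookkeeping point worth stating explicitly: the restriction map need not be injective on $G_n$, so Theorem~\ref{Teo:1} gives uniqueness only of the restricted best approximation, not of $\tilde f_n$ itself — consistent with the paper's remark following the theorem, and harmless since the statement is only an equivalence. What your route buys is a proof entirely internal to the paper's toolkit; what the citation buys is generality (the operator setting of~\cite{MugVdZ_SINUM2020}) at the cost of self-containment.
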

\begin{proof}
See \cite[Theorem~4.1]{MugVdZ_SINUM2020}.
\end{proof}
\begin{Remark}
Observe that the solution of~\eqref{eq:discrete_min} may not be unique, even when $(V_m)^*$ is strictly convex. 
This is because $\|\cdot\|_{(V_m)^*}$ is indeed a norm in $(V_m)^*$, but it is not a norm in $V^*$. In particular, two different elements of $G_n$ may have the same action over the elements of $V_m$. The following Theorem~\ref{inexact:thm} provides a sufficient condition to guarantee the well-posedness of~\eqref{fully1}, or equivalently~\eqref{eq:discrete_min}.
\end{Remark}

\begin{thm}\label{inexact:thm}
Let $V$ and  $V^*$ be strictly convex and reflexive Banach spaces. Assume that the finite dimensional approximation subspaces $G_n\subset V^*$ and $V_m \subset V$ satisfy the existence of a continuous (Fortin) operator 
$\Pi:  V\rightarrow V_m$  such that:
\begin{itemize}
\item[i.] 
\itemEq{\| \Pi v \|_{V}\leq C_{\Pi}\|v\|_{V},\quad \forall v\in V\mbox{ and some } C_{\Pi} >0.
\label{constC}}
\item[ii.] \itemEq{\langle g_n, v-\Pi v\rangle_{V^*,V}=0, \quad\forall g_n \in G_n,\forall v\in V.\label{Fortin:c3}}
\end{itemize}
Then, for any $f\in V^*$, there is a unique $(r_m,\tilde f_n)\in V_m\times G_n$ solution of problem~\eqref{fully1}. 
The solution satisfies the apriori estimates:
\begin{equation}\label{eq:apriori}
\|r_m\|_V^{s-1}\leq \|f\|_{V^*}.%^{1\over s-1}.
\qquad\mbox{ and }\qquad \|\tilde f_n\|_{V^*}\leq 2C_\Pi \|f\|_{V^*}.
\end{equation}
Moreover, recalling the solution of~\eqref{mini} $f_n\in G_n$, we have the quasi-optimality properties:
\begin{alignat}{2}
\|r_m\|_V^{s-1} & \leq\inf_{g_n\in G_n}\|f-g_n\|_{V^*}=\|f-f_n\|_{V^*}\label{eq:quasi1}\\
\|f-\tilde f_n\|_{V^*} & \leq (1+2C_\Pi)\inf_{g_n\in G_n}\|f-g_n\|_{V^*}= (1+2C_\Pi)\|f-f_n\|_{V^*}.\label{eq:quasi2}
\end{alignat}
\end{thm}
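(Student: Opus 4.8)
The plan is to distill from the two Fortin properties \eqref{constC}--\eqref{Fortin:c3} a single structural fact — a discrete inf--sup (compatibility) condition for the duality pairing on $G_n\times V_m$ — and then to run a standard residual-minimization argument on top of it. First I would show that, for every $g_n\in G_n$,
\[
\|g_n\|_{(V_m)^*}=\sup_{v_m\in V_m}\frac{\langle g_n,v_m\rangle_{V^*,V}}{\|v_m\|_V}\;\ge\;\sup_{v\in V}\frac{\langle g_n,\Pi v\rangle_{V^*,V}}{\|\Pi v\|_V}\;\ge\;\frac{1}{C_\Pi}\sup_{v\in V}\frac{\langle g_n,\Pi v\rangle_{V^*,V}}{\|v\|_V}=\frac{1}{C_\Pi}\,\|g_n\|_{V^*},
\]
where the first inequality restricts the supremum to the range of $\Pi$, the second uses the continuity \eqref{constC}, and the last equality is precisely the Fortin orthogonality \eqref{Fortin:c3} ($\langle g_n,\Pi v\rangle_{V^*,V}=\langle g_n,v\rangle_{V^*,V}$). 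Combined with the trivial bound $\|g_n\|_{(V_m)^*}\le\|g_n\|_{V^*}$, this shows that on the finite-dimensional space $G_n$ the discrete dual seminorm $\|\cdot\|_{(V_m)^*}$ is actually a norm, equivalent to $\|\cdot\|_{V^*}$ with constant $C_\Pi$. This is the only substantive ingredient; everything below is bookkeeping.

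\textbf{Well-posedness.} For uniqueness, suppose $(r_m,\tilde f_n)$ and $(r_m',\tilde f_n')$ both solve \eqref{fully1}. Subtracting the first equations and testing with $v_m=r_m-r_m'\in V_m$, the coupling term drops out, since $\tilde f_n-\tilde f_n'\in G_n$ while the second equation gives $\langle g_n,r_m-r_m'\rangle_{V^*,V}=0$ for all $g_n\in G_n$; hence $\langle\mathcal J_{s,V}(r_m)-\mathcal J_{s,V}(r_m'),r_m-r_m'\rangle_{V^*,V}=0$, and strict monotonicity of $\mathcal J_{s,V}$ (a consequence of the strict convexity of $V$) forces $r_m=r_m'$. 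The first equation then reduces to $\langle\tilde f_n-\tilde f_n',v_m\rangle_{V^*,V}=0$ for all $v_m\in V_m$, i.e. $\|\tilde f_n-\tilde f_n'\|_{(V_m)^*}=0$, and the inf--sup bound gives $\tilde f_n=\tilde f_n'$. For existence I would invoke Theorem~\ref{Thm:inexact-method}: a solution of \eqref{fully1} exists exactly when $g_n\mapsto\|f-g_n\|_{(V_m)^*}$ attains its minimum over $G_n$; on the finite-dimensional space $G_n$ this map is continuous and, by the inf--sup bound, coercive ($\|f-g_n\|_{(V_m)^*}\ge C_\Pi^{-1}\|g_n\|_{V^*}-\|f\|_{V^*}$), so a minimizer $\tilde f_n$ exists and comes equipped with a companion $r_m$. (Alternatively one can verify surjectivity of the monotone operator underlying \eqref{fully1} directly in finite dimensions, but the minimization route is cleaner.)

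\textbf{A priori bounds.} Testing the first equation of \eqref{fully1} with $v_m=r_m$ and using the second equation to annihilate $\langle\tilde f_n,r_m\rangle_{V^*,V}$ gives $\langle\mathcal J_{s,V}(r_m),r_m\rangle_{V^*,V}=\langle f,r_m\rangle_{V^*,V}$; by Definition~\ref{dualitydef:1} the left side equals $\|r_m\|_V^s$, so $\|r_m\|_V^s\le\|f\|_{V^*}\|r_m\|_V$, i.e. $\|r_m\|_V^{s-1}\le\|f\|_{V^*}$. From the first equation again, $\langle\tilde f_n,v_m\rangle_{V^*,V}=\langle f,v_m\rangle_{V^*,V}-\langle\mathcal J_{s,V}(r_m),v_m\rangle_{V^*,V}$ yields $\|\tilde f_n\|_{(V_m)^*}\le\|f\|_{V^*}+\|\mathcal J_{s,V}(r_m)\|_{V^*}=\|f\|_{V^*}+\|r_m\|_V^{s-1}\le2\|f\|_{V^*}$, and the inf--sup bound upgrades this to $\|\tilde f_n\|_{V^*}\le C_\Pi\|\tilde f_n\|_{(V_m)^*}\le2C_\Pi\|f\|_{V^*}$, which is \eqref{eq:apriori}.

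\textbf{Quasi-optimality.} The second equation of \eqref{fully1} says $\langle g_n,r_m\rangle_{V^*,V}=0$ for every $g_n\in G_n$, so $\|r_m\|_V^s=\langle f,r_m\rangle_{V^*,V}=\langle f-g_n,r_m\rangle_{V^*,V}\le\|f-g_n\|_{V^*}\|r_m\|_V$ for all such $g_n$; dividing gives \eqref{eq:quasi1}. For \eqref{eq:quasi2}, fix $g_n\in G_n$ and write $\|f-\tilde f_n\|_{V^*}\le\|f-g_n\|_{V^*}+\|g_n-\tilde f_n\|_{V^*}$; since $g_n-\tilde f_n\in G_n$, the inf--sup bound together with the triangle inequality and the minimality of $\tilde f_n$ in $\|f-\cdot\|_{(V_m)^*}$ (Theorem~\ref{Thm:inexact-method}) gives
\[
\|g_n-\tilde f_n\|_{V^*}\le C_\Pi\|g_n-\tilde f_n\|_{(V_m)^*}\le C_\Pi\big(\|g_n-f\|_{(V_m)^*}+\|f-\tilde f_n\|_{(V_m)^*}\big)\le2C_\Pi\|f-g_n\|_{(V_m)^*}\le2C_\Pi\|f-g_n\|_{V^*}.
\]
Hence $\|f-\tilde f_n\|_{V^*}\le(1+2C_\Pi)\|f-g_n\|_{V^*}$; taking the infimum over $g_n\in G_n$ and using Theorem~\ref{Teo:1} to identify $\inf_{g_n}\|f-g_n\|_{V^*}=\|f-f_n\|_{V^*}$ completes the proof. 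The main obstacle is the first paragraph — converting the Fortin operator into the discrete inf--sup, hence into the norm equivalence on $G_n$; once that is in hand the remaining estimates follow mechanically by testing with $r_m$ and the triangle inequality, and the only point requiring a little care is checking that the minimization reformulation of Theorem~\ref{Thm:inexact-method} genuinely returns the companion residual $r_m$.
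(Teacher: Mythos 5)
Your proposal is correct, and its core estimates coincide with the paper's: testing the first equation of~\eqref{fully1} with $v_m=r_m$ and using the orthogonality from the second equation gives both the bound $\|r_m\|_V^{s-1}\le\|f\|_{V^*}$ and~\eqref{eq:quasi1}, exactly as in the paper, and your opening computation (restrict the supremum to the range of $\Pi$, use~\eqref{constC}, then~\eqref{Fortin:c3}) is the same mechanism the paper uses, except that you state it once as a norm equivalence $\|g_n\|_{V^*}\le C_\Pi\|g_n\|_{(V_m)^*}$ on all of $G_n$, whereas the paper applies it only to $\tilde f_n$. Where you genuinely diverge: (a) for well-posedness the paper simply cites an external reference, while you give a self-contained argument (strict monotonicity of $\mathcal J_{s,V}$, valid under strict convexity of $V$, for uniqueness of $r_m$; your inf--sup bound for uniqueness of $\tilde f_n$; existence via coercivity of $g_n\mapsto\|f-g_n\|_{(V_m)^*}$ on the finite-dimensional $G_n$ combined with Theorem~\ref{Thm:inexact-method}, which legitimately supplies the companion $r_m$); (b) for the second bound in~\eqref{eq:apriori} the paper passes through $\|\tilde f_n\|_{(V_m)^*}\le 2\|f\|_{(V_m)^*}$ via the best-approximation estimate~\eqref{eq:rough_apriori_est}, while you bound $\|\tilde f_n\|_{(V_m)^*}\le\|f\|_{V^*}+\|r_m\|_V^{s-1}$ directly from the first equation — both give $2C_\Pi\|f\|_{V^*}$; (c) for~\eqref{eq:quasi2} the paper uses the quasi-projector identity $P_n(f-g_n)=P_n(f)-g_n$ with $\|P_n\|\le 2C_\Pi$, while you use the triangle inequality plus the discrete minimality of $\tilde f_n$ and your norm equivalence on $G_n$, arriving at the same constant $(1+2C_\Pi)$. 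Your route arguably buys a more explicit, self-contained well-posedness proof and isolates the discrete inf--sup condition as the reusable structural fact; the paper's projector argument is slightly slicker for~\eqref{eq:quasi2}. The only cosmetic point is that in your first display the suprema should be taken over $v$ with $\Pi v\neq 0$ (harmless, since $\Pi v=0$ forces $\langle g_n,v\rangle=0$ by~\eqref{Fortin:c3}); the paper's own displayed computation has the same implicit restriction.
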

\begin{proof}
A general well-posedness proof can be found in \cite[Theorem~4.5]{MugVdZ_SINUM2020} (just accommodate it considering the operator $B$ as the identity in $V^*$). Nevertheless, we will show here how to obtain the estimates 
\eqref{eq:apriori}, \eqref{eq:quasi1} and \eqref{eq:quasi2}, since their proof is slightly different. Indeed, testing the first equation of the fully-discrete mixed problem~\eqref{fully1} with $v_m=r_m$, using the ortogonal property of $r_m$, and the definition of the duality map (see Definition~\ref{dualitydef:1}), we obtain:
$$
\|r_m\|^s_{V}=\left<f,r_m\right>_{V^*,V}=\left<f-g_n,r_m\right>_{V^*,V}\,\quad \forall g_n\in G_n,
$$
which gives the first estimate in~\eqref{eq:apriori} and also~\eqref{eq:quasi1} after using Cauchy-Schwarz's inequality. For the second estimate in~\eqref{eq:apriori} observe that:
\begin{alignat}{2}
\|\tilde f_n\|_{V^*} = \sup_{v\in V}{\left<\tilde f_n,v\right>_{V^*,V}\over \|v\|_V}
\leq & C_\Pi\sup_{v\in V}{\left<\tilde f_n,\Pi v\right>_{V^*,V}\over \|\Pi v\|_V}\tag{by~\eqref{constC}~and~\eqref{Fortin:c3}}\\
\leq & C_\Pi \|\tilde f_n\|_{(V_m)^*}\tag{since $\Pi V\subset V_m$}\\
\leq & 2 C_\Pi \|f\|_{(V_m)^*}\tag{by~\eqref{eq:discrete_min}~and~\eqref{eq:rough_apriori_est}}\\
\leq & 2 C_\Pi \|f\|_{V^*}\,.\tag{since $V_m\subset V$}
\end{alignat}
Moreover, it is easy to see that the application $P_n:V^*\to G_n$ such that $P_n(f):=\tilde f_n$ defines a projector for which $\|P_n(f)\|_{V^*}\leq 2C_\Pi \|f\|_{V^*}$ and $P_n(f-g_n)=P_n(f)-g_n$, for any $g_n\in G_n$. Hence we have:
$$
\|f-\tilde f_n\|_{V^*}=\|(I-P_n)f\|_{V^*}=\|(I-P_n)(f-g_n)\|_{V^*}\leq (1+2C_\Pi)\|f-g_n\|_{V^*}\,,
$$
which proves~\eqref{eq:quasi2}.
\end{proof}
\begin{Remark} An operator satisfying \eqref{constC} and \eqref{Fortin:c3} is known as a Fortin operator (see~ \cite{boffi2013mixed}). The existence of such a Fortin operator requires that 
\begin{equation}\label{forti_condition}
\dim( G_n)\leq \dim( V_m).
\end{equation}
Observe that to ensure stability and quasi-optimality, the constant $C_\Pi>0$ must be uniformly bounded in terms of the discretization parameters $\{n,m\}$ of the underlying discrete spaces $G_n$ and $V_m$. 
\end{Remark}
\begin{Remark}
The stability constants $2C_\Pi$ in~\eqref{eq:apriori} and $(1+2C_\Pi)$ in~\eqref{eq:quasi2} are not sharp in general. They can be improved using geometrical constants of the underlying Banach spaces $V$ and $V^*$. See~\cite[section~4.4]{MugVdZ_SINUM2020} for the details. 
\end{Remark}
\begin{Remark}\label{rem:rates}
For finite element discretizations on quasi uniform meshes $\{\mathcal T_h\}_{h>0}$, one would expect that the best approximation $\|f-f_n\|_{V^*}$ is bounded by a constant times $h^\tau$, where $\tau>0$ is limited by the regularity of $f\in V^*$ and the polynomial degree of the finite element space.
See Section~\ref{applications} for examples with $V^* = W^{-1,p}$.
\end{Remark}

\subsection{A posteriori error estimate}
In residual minimization methods, it is customary to use the quantity $\|r_m\|_V$ as an error estimate to drive adaptivity procedures. The next Theorem aims to answer the query about if $\|r_m\|_V$, as an a posteriori error estimate, is indeed reliable and efficient.

\begin{thm}[A posteriori error estimator]\label{thm:aposteriori}
Assume the same conditions of Theorem~\ref{inexact:thm}. For any $f\in V^*$, the counterpart $r_m\in V_m$ of the unique solution of the discrete problem~\eqref{fully1} satisfies:
\begin{equation}\label{eq:aposteriori}
\|r_m\|^{s-1}_V\leq \|f-\tilde f_n\|_{V^*}\leq \operatorname{osc}(f) + C_\Pi\|r_m\|_V^{s-1},
\end{equation}
where the oscillation term is defined by
$$
\operatorname{osc}(f):=\sup_{v\in V}{\left< f,v-\Pi v \right>_{V^*,V}\over \|v\|_V}.
$$
\end{thm}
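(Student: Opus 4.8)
The plan is to establish the two-sided bound in~\eqref{eq:aposteriori} by exploiting the orthogonality/equation structure of the fully-discrete problem~\eqref{fully1} together with the Fortin operator.

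For the \textbf{lower bound} $\|r_m\|_V^{s-1}\le\|f-\tilde f_n\|_{V^*}$: I would start from the first equation of~\eqref{fully1}, which says $\langle\mathcal J_{s,V}(r_m),v_m\rangle=\langle f-\tilde f_n,v_m\rangle$ for all $v_m\in V_m$. Testing with $v_m=r_m$ and invoking the defining properties of the duality map (Definition~\ref{dualitydef:1}) gives $\|r_m\|_V^s=\langle\mathcal J_{s,V}(r_m),r_m\rangle=\langle f-\tilde f_n,r_m\rangle\le\|f-\tilde f_n\|_{V^*}\|r_m\|_V$. Dividing by $\|r_m\|_V$ (trivial if $r_m=0$) yields $\|r_m\|_V^{s-1}\le\|f-\tilde f_n\|_{V^*}$. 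This is essentially a restatement of the estimate already used in the proof of Theorem~\ref{inexact:thm}.

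For the \textbf{upper bound}: I would compute $\|f-\tilde f_n\|_{V^*}=\sup_{v\in V}\langle f-\tilde f_n,v\rangle/\|v\|_V$ and split $v=\Pi v+(v-\Pi v)$. The Fortin property~\eqref{Fortin:c3} applied to $\tilde f_n\in G_n$ kills the term $\langle\tilde f_n,v-\Pi v\rangle$, so $\langle f-\tilde f_n,v\rangle=\langle f-\tilde f_n,\Pi v\rangle+\langle f,v-\Pi v\rangle$. The second piece is bounded by $\operatorname{osc}(f)\,\|v\|_V$ directly from its definition. For the first piece, since $\Pi v\in V_m$, the first equation of~\eqref{fully1} gives $\langle f-\tilde f_n,\Pi v\rangle=\langle\mathcal J_{s,V}(r_m),\Pi v\rangle\le\|\mathcal J_{s,V}(r_m)\|_{V^*}\|\Pi v\|_V=\|r_m\|_V^{s-1}\|\Pi v\|_V$ using property~ii of the duality map, and then $\|\Pi v\|_V\le C_\Pi\|v\|_V$ by~\eqref{constC}. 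Collecting, $\langle f-\tilde f_n,v\rangle\le(\operatorname{osc}(f)+C_\Pi\|r_m\|_V^{s-1})\|v\|_V$; taking the supremum over $v$ gives the claimed upper bound.

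I do not anticipate a serious obstacle here — both inequalities follow cleanly from the mixed equations and the Fortin conditions. The only mild subtlety is bookkeeping the duality-map identities for general $s>1$ (as opposed to the Hilbert case $s=2$), and handling the degenerate case $r_m=0$ in the lower bound, but these are routine. The efficiency direction (lower bound) is immediate; the reliability direction (upper bound) is the one doing the real work, and the oscillation term $\operatorname{osc}(f)$ is exactly the unavoidable remainder coming from the part of $f$ that $V_m$ cannot see through $\Pi$.
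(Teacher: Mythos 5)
Your proposal is correct and follows essentially the same route as the paper's proof: the reliability bound via the splitting $v=\Pi v+(v-\Pi v)$, the Fortin conditions~\eqref{constC}--\eqref{Fortin:c3}, the first equation of~\eqref{fully1} and the identity $\|\mathcal J_{s,V}(r_m)\|_{V^*}=\|r_m\|_V^{s-1}$. Your direct derivation of the efficiency bound by testing with $v_m=r_m$ is just the argument behind~\eqref{eq:quasi1}, which is exactly what the paper invokes for that inequality.
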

\begin{proof}
The first inequality (from left to right) in~\eqref{eq:aposteriori} is an immediate consequence of~\eqref{eq:quasi1}. For the second inequality observe that:
\begin{alignat}{2}
\|f-\tilde f_n\|_{V^*} 
= &  \sup_{v\in V}{\left<f-\tilde f_n,v-\Pi v +\Pi v\right>_{V^*,V}\over \|v\|_V}\tag{since $-\Pi v+\Pi v=0$}\\
\leq & \operatorname{osc}(f)+C_\Pi \sup_{v\in V}{\left<f-\tilde f_n,\Pi v\right>_{V^*,V}\over \|\Pi v\|_V}\tag{by~\eqref{constC}~and~\eqref{Fortin:c3}}\\
\leq & \operatorname{osc}(f)+C_\Pi \sup_{v\in V}{\left<\mathcal J_{s,V}(r_m),\Pi v\right>_{V^*,V}\over \|\Pi v\|_V}\tag{by~\eqref{fully1}}\\
\leq & \operatorname{osc}(f)+C_\Pi \|r_m\|^{s-1}_V,\notag
\end{alignat} 
where the last inequality has been obtained using Cauchy-Schwarz's inequality and Definition~\ref{dualitydef:1}.
\end{proof}
\begin{Remark}
Using property~\eqref{Fortin:c3} observe that: 
$$
\operatorname{osc}(f)=\sup_{v\in V}{\left< f-g_n,v-\Pi v \right>_{V^*,V}\over \|v\|_V},\quad\forall g_n\in G_n.
$$
Hence, $\operatorname{osc}(f)\leq (1+C_\Pi)\inf_{g_n\in G_n}\|f-g_n\|_{V^*}$, which combined with~\eqref{eq:aposteriori} and~\eqref{eq:quasi1} gives another way to prove~\eqref{eq:quasi2}. 
\end{Remark}

\section{Compatible pairs}\label{sec:compatible_pairs}
In this section we introduce two practical options of compatible pairs $G_n$-$V_m$ verifying the requirements of Theorem~\ref{inexact:thm}. 
The functional context is the following. Let us consider a bounded Lipschitz domain $\Omega\subset\mathbb R^d$, and $V:=W^{1,q}_0(\Omega)$ with $q>d$.\footnote{This last requirement allows us the use of the Lagrange interpolant~\cite[section~1.5.1]{ern2013theory} in the proofs of Propositions~\ref{Fortin_space} and~\ref{prop:compatible2}. The results may be extended to the whole range of $q\geq 1$ using the Scott-Zhang interpolant~\cite[section~1.6.2]{ern2013theory}. However, the proofs would become more technical than they already are.}
Let $V^*=W^{-1,p}(\Omega)$ be the dual space of $V$, where $p=q/(q-1)$, 
and let $\mathcal{T}_h=\{T_i\}_{i=1}^{n}\subset \Omega$ be a simplicial partition of disjoint open elements such that $\cup_{i=1}^n\overline{T_i}=\overline{\Omega}$.

\subsection{The $\mathbb P_0/(\mathbb P_1 +\mbox{bubbles})$ compatible pair}
Let 
\begin{equation}\label{trialspace:1}
\left\{\begin{array}{l}
G_n:=\text{span}\{\mathcal{G}_1,...,\mathcal G_n\} \subset V^*,  \\
\text{where, }  \,
\langle \mathcal{G}_{i},\phi\rangle_{V^*,V}:=\displaystyle\int_{T_i}\phi, \quad \forall \phi \in V,
\mbox{ for each } T_i\in\mathcal T_h.
\end{array}\right.
\end{equation}
The space $G_n$ defined above is an analog of the piecewise constant space $\mathbb P_0$. However, notice that $G_n$ is a space of functionals or \emph{actions}, instead of space of \emph{functions}. 
In order to solve the mixed system~\eqref{fully1}, we need to come up with a discrete test space $V_m\subset V$ satisfying the requirements of Theorem~\ref{inexact:thm}. For that, we consider the interior local bubble functions $\mathit{b}_{i}\in W^{1,q}_0(T_i)$ defined by: %
\begin{equation}\label{eq:bubble}
b_i(x)=\prod^{d+1}_{j=1} \lambda_j(x),\qquad \forall  i=1,...,n,
\end{equation}
where $\{\lambda_j\}$ are the barycentric coordinates of the simplex $T_i$.
The $n$-dimensional space generated by these bubble functions will be denoted by: 
\begin{equation}\label{Bubble}
\mathbb {B}_n(\mathcal{T}_h):=\left\{v\in W^{1,q}_0(\Omega)\cap \mathcal{C}(\overline{\Omega}); v|_{T_i} \in \text{span}\{\mathit{b}_{i}\}, \forall T_i\in \mathcal{T}_h \right\}.
\end{equation}
Additionally, we consider the piecewise polynomial finite element space
\begin{equation}\label{eq:P1} 
\mathbb{P}_1(\mathcal{T}_h):=\left\{v\in \mathcal{C}(\overline{\Omega}); v|_{T_i}\in \mathbb{P}_1, \forall T_i\in \mathcal{T}_h\right\}.
\end{equation}
\begin{prop}\label{Fortin_space}Assume that we have a shape-regular family of affine simplicial meshes $\{\mathcal T_h\}_{h>0}$.
If $V_m\subset V$ is a finite dimensional subspace containing the spaces $\mathbb{B}_n(\mathcal{T}_h)$ and $\mathbb{P}_1(\mathcal{T}_h)\cap W^{1,q}_0(\Omega)$, then $V_m$ and $G_n$ (defined in~\eqref{trialspace:1}) satisfy 
the assumptions of Theorem~\ref{inexact:thm}, i.e., 
there exists a Fortin operator $\Pi:V  \mapsto V_m$ verifying~\eqref{constC} and~\eqref{Fortin:c3}.
\end{prop}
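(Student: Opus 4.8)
The strategy is to construct the Fortin operator $\Pi$ explicitly as a sum of two pieces: a low-order interpolation part that is accurate in $W^{1,q}_0$, and a bubble-correction part that repairs the orthogonality condition \eqref{Fortin:c3}. Recall that $G_n$ is spanned by the mean-value functionals $\phi\mapsto\int_{T_i}\phi$, so \eqref{Fortin:c3} is equivalent to demanding that $\int_{T_i}(v-\Pi v)=0$ on every element $T_i\in\mathcal T_h$. Since it suffices to produce \emph{some} operator into $V_m$, and $V_m$ contains $\mathbb{P}_1(\mathcal{T}_h)\cap W^{1,q}_0(\Omega)$ and $\mathbb{B}_n(\mathcal{T}_h)$, I will take $\Pi v := \Pi^1 v + \Pi^b(v-\Pi^1 v)$, where $\Pi^1$ is the (vanishing-on-boundary) Lagrange interpolant onto $\mathbb{P}_1(\mathcal{T}_h)\cap W^{1,q}_0(\Omega)$ — well-defined for $q>d$ by the Sobolev embedding $W^{1,q}\hookrightarrow\mathcal C(\overline\Omega)$ — and $\Pi^b w := \sum_{i=1}^n c_i(w)\, b_i$ is a bubble correction with coefficients $c_i(w)$ chosen so that $\int_{T_i} \Pi^b w = \int_{T_i} w$ for each $i$; explicitly $c_i(w) = \big(\int_{T_i} w\big)\big/\big(\int_{T_i} b_i\big)$.

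The verification then proceeds in three steps. \emph{Step 1 (the orthogonality \eqref{Fortin:c3}).} Because each $b_i$ is supported on $T_i$ and the mean-value functionals are element-local, one computes directly $\int_{T_j}(v-\Pi v) = \int_{T_j}(v-\Pi^1 v) - \int_{T_j}\Pi^b(v-\Pi^1 v) = \int_{T_j}(v-\Pi^1 v) - c_j(v-\Pi^1 v)\int_{T_j} b_j = 0$ by the choice of $c_j$. Hence $\langle g_n, v-\Pi v\rangle_{V^*,V}=0$ for all $g_n\in G_n$. \emph{Step 2 (stability of $\Pi^1$).} This is the standard $W^{1,q}$-stability/approximability of the Lagrange interpolant on a shape-regular family: $\|\Pi^1 v\|_{W^{1,q}(\Omega)} \le \|v\|_{W^{1,q}(\Omega)} + \|v-\Pi^1 v\|_{W^{1,q}(\Omega)} \lesssim \|v\|_{W^{1,q}(\Omega)}$, with constant depending only on shape-regularity, $q$, and $d$ — here one invokes the cited interpolation theory in~\cite{ern2013theory}. \emph{Step 3 (stability of the bubble correction).} On the reference simplex, $c_i(w)\mapsto c_i(w)b_i$ is a bounded map from $L^1(\widehat T)$ (or, more conveniently, from $W^{1,q}(\widehat T)$ via the embedding into $L^1$) into $W^{1,q}(\widehat T)$; scaling to $T_i$ and summing over the (finite-overlap, in fact disjoint-interior) elements gives $\|\Pi^b(v-\Pi^1 v)\|_{W^{1,q}(\Omega)} \lesssim \|v-\Pi^1 v\|_{W^{1,q}(\Omega)} \lesssim \|v\|_{W^{1,q}(\Omega)}$. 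Combining Steps 2–3 with the triangle inequality yields \eqref{constC} with a constant $C_\Pi$ depending only on shape-regularity, $q$, $d$, and finally one notes $\Pi v\in V_m$ since both summands lie in $V_m$ by hypothesis.

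The main obstacle is Step 3: getting the scaling of the bubble correction right and, crucially, obtaining a bound that is \emph{uniform in $h$} (equivalently, in the mesh level $\{n\}$). The subtlety is that $\int_{T_i} b_i$ scales like $|T_i|$ while $\|b_i\|_{W^{1,q}(T_i)}$ scales like a negative power of $h$ times a power of $|T_i|$, so one must track these powers carefully via the affine pull-back $\widehat T\to T_i$, use $|T_i|\simeq h_{T_i}^d$ and the shape-regularity bound $h_{T_i}\le \rho\, r_{T_i}$, and check that the $h$-powers cancel to leave a dimensionless constant. A clean way to organize this is: estimate $|c_i(v-\Pi^1 v)|$ in terms of $\|v-\Pi^1 v\|_{L^q(T_i)}$ times a power of $h_{T_i}$ (via Hölder on $\int_{T_i}$), then estimate $\|c_i(v-\Pi^1 v) b_i\|_{W^{1,q}(T_i)}$ by an inverse-type scaling, multiply, and sum, using the disjointness of element interiors so that $\sum_i \|v-\Pi^1 v\|_{W^{1,q}(T_i)}^q = \|v-\Pi^1 v\|_{W^{1,q}(\Omega)}^q$. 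Because of this technical bookkeeping the full argument is deferred to the appendix, as the excerpt already indicates.
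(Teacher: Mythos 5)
Your construction is essentially the paper's own proof: the same operator $\Pi v=\Pi_1 v+\sum_i\alpha_i b_i$ with $\alpha_i=\bigl(\int_{T_i}b_i\bigr)^{-1}\int_{T_i}(v-\Pi_1 v)$, the same element-local mean-matching argument for \eqref{Fortin:c3}, and the same affine-scaling plus shape-regularity bookkeeping for \eqref{constC}. The one caution is that the uniform-in-$h$ bound for the bubble correction must invoke the local interpolation error estimate $\|v-\Pi_1 v\|_{L^q(T_i)}\lesssim h_i\|\nabla v\|_{L^q(T_i)}$ (the paper's \eqref{local::intepor}), so that the factor $h_i$ cancels the $\rho_i^{-1}$ coming from $\|\nabla b_i\|_{L^q(T_i)}$; the literal Step~3 bound via $\|v-\Pi^1 v\|_{W^{1,q}(\Omega)}$ alone would not be mesh-uniform, though your closing paragraph already points at exactly this cancellation.
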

\begin{proof}
See Appendix~\ref{appendix}.
\end{proof}
\begin{Remark}
An alternative to $\mathbb{B}_n(\mathcal{T}_h)$ can be any $n$-dimensional space generated by piecewise linear and continuous bubbles supported on each of the elements of $\mathcal{T}_h$, which somehow is a space of extra $h$-refinements of $\mathbb P_1(\mathcal T_h)$. 
\end{Remark}
\begin{Remark}\label{rem:piecewise_poly}
Notice that the following practical piecewise polynomial finite element space contains both $\mathbb{B}_n(\mathcal{T}_h)$ and $\mathbb{P}_1(\mathcal{T}_h)\cap W^{1,q}_0(\Omega)$ spaces:
$$
\mathbb{P}_{d+1}(\mathcal{T}_h)\cap W^{1,q}_0(\Omega):=\left\{v\in W^{1,q}_0(\Omega)\cap \mathcal{C}(\overline{\Omega}); v|_{T_i}\in \mathbb{P}_{d+1}, \forall T_i\in \mathcal{T}_h\right\}.
$$
\end{Remark}
%%%%%%%%%%%%%%%%%%%%%%%%%%%%%%%%%
%
\subsection{The $\mathbb P_1/\mathbb P_2$ compatible pair}\label{sec:P1/P2}
Consider the space $\mathbb P_1(\mathcal T_h)$ defined in~\eqref{eq:P1} and let $\{\varphi_i\}_{i=1}^{N_v}$ be the set of nodal basis functions spanning $\mathbb P_1(\mathcal T_h)\cap W_0^{1,p}(\Omega)$, where $N_v$ corresponds to the number of interior vertices associated with $\mathcal T_h$. Let
\begin{equation}\label{eq:G_nv}
\left\{\begin{array}{l}
G_{N_v}:=\text{span}\{\mathcal{G}_1,...,\mathcal G_{N_v}\} \subset V^*,  \\
\text{where, } \,
\langle \mathcal{G}_{i},\phi\rangle_{V^*,V}:=\displaystyle\int_{\Omega}\varphi_i\,\phi, \quad \forall \phi \in V,
\mbox{ for each } i=1,...,N_v.
%\quad \forall T_i\in \mathcal T_h.
\end{array}\right.
\end{equation}
Moreover, let
\begin{equation}\label{eq:P2} 
\mathbb{P}_2(\mathcal{T}_h):=\left\{v\in \mathcal{C}(\overline{\Omega}); v|_{T_i}\in \mathbb{P}_2, \forall T_i\in \mathcal{T}_h\right\}.
\end{equation}
The next proposition establishes the compatibility of a space $V_m\supseteq\mathbb{P}_2(\mathcal{T}_h)\cap W^{1,q}_0(\Omega)$ with $G_{N_v}$, under the following mesh assumption.
\begin{assumption}[Quasi-uniform patches]\label{ass:mesh} Let $\{\mathcal T_h\}_{h>0}$ be a shape-regular family of affine simplicial meshes. Let $\{\varphi_i\}_{i=1}^{N_v}$ be the set of nodal basis functions spanning $\mathbb P_1(\mathcal T_h)\cap W_0^{1,p}(\Omega)$. For each $i=1,...,N_v$, let $P_i:=\operatorname{supp}\varphi_i$ be the patch of elements supporting the function $\varphi_i$. Let $h_T>0$ denote the diameter of an element $T \subset P_i$ and let $h_i=\max_{T\subset P_i}h_T$.
We assume the existence of a mesh-independent constant $c>0$ such that $h_T\leq c h_i$, for every element $T\subset P_i$. 
We further assume that for every patch $P_i$, there is a reference patch $\widehat P_i\subset\mathbb R^d$ of unitary measure, such that every element $\widehat T\subset \widehat P_i$ is mapped onto a unique element $T\subset P_i$ through an affine transformation $F_T:\widehat T\to T$ of the form $F_T(\widehat x)= A_T\widehat x + y_T$, where:
\begin{equation}\label{eq:scaling}
|\det A_T| = {|T|\over|\widehat T|} =:\eta_i,\qquad\forall T\subset P_i.
\end{equation}
The constant $\eta_i$ will be referred to as the \emph{scaling constant} of the patch $P_i$.
\end{assumption}

\begin{prop}\label{prop:compatible2}
Under the hypothesis of Assumtion~\ref{ass:mesh}, if $V_m\subset V$ is a finite dimensional subspace containing the space $\mathbb P_2(\mathcal T_h)\cap W^{1,q}_0(\Omega)$, then $V_m$ and $G_{N_v}$ (defined in~\eqref{eq:G_nv}) satisfy 
the assumptions of Theorem~\ref{inexact:thm}, i.e., 
there exists a Fortin operator $\Pi:V  \mapsto V_m$ verifying~\eqref{constC} and~\eqref{Fortin:c3}.
\end{prop}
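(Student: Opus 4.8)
The plan is to construct an explicit Fortin operator $\Pi:V\to V_m$ by composing the Lagrange interpolant onto $\mathbb{P}_2(\mathcal T_h)\cap W^{1,q}_0(\Omega)$ with a correction that enforces the orthogonality condition~\eqref{Fortin:c3}. Recall that $G_{N_v}$ is spanned by the functionals $\mathcal G_i(\phi)=\int_\Omega \varphi_i\phi$, where $\varphi_i$ is the $\mathbb{P}_1$ nodal hat function at the $i$-th interior vertex. So condition~\eqref{Fortin:c3} amounts to requiring $\int_\Omega \varphi_i(v-\Pi v)=0$ for all $i$ and all $v\in V$. The natural strategy, following the standard Fortin recipe (cf.~\cite{boffi2013mixed} and the companion proof in Appendix~\ref{appendix}), is to first take $\Pi_1 v := I_h v$, the Lagrange $\mathbb{P}_2$-interpolant of $v$ (well-defined since $q>d$), and then add a correction $\Pi v := I_h v + \sum_{i} c_i(v)\, \beta_i$, where $\{\beta_i\}\subset V_m$ is a system of local functions, one associated with each interior vertex, and the coefficients $c_i(v)$ are chosen so that the orthogonality $\int_\Omega\varphi_j(v-\Pi v)=0$ holds for every $j$.

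The key design choice is the auxiliary functions $\beta_i$. I would take $\beta_i$ supported on the patch $P_i=\operatorname{supp}\varphi_i$ — for instance a suitable combination of $\mathbb{P}_2$ edge-bubbles on $P_i$, or $\varphi_i$ itself multiplied by an edge bubble — engineered so that the local mass-type matrix $M_{ji}:=\int_\Omega \varphi_j\beta_i$ is invertible with a well-controlled inverse. If the $\beta_i$ are chosen with disjoint-enough supports (or so that $M$ is, say, diagonally dominant after scaling), then $c(v) = M^{-1} g(v)$ with $g_j(v) = \int_\Omega \varphi_j(v - I_h v)$, and one needs $\|c(v)\|$ bounded appropriately. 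The continuity estimate~\eqref{constC} then follows from three ingredients: (a) $W^{1,q}$-stability of the Lagrange interpolant $I_h$ on shape-regular meshes; (b) a bound $|g_j(v)|\lesssim h_j^{?}\|v\|_{W^{1,q}(P_j)}$ coming from the approximation properties of $I_h$ together with the scaling of $\varphi_j$ on $P_j$; and (c) a bound on $\|\beta_i\|_{W^{1,q}}$ and on $M^{-1}$, both expressed through the scaling constant $\eta_i$ of~\eqref{eq:scaling}. Here is precisely where Assumption~\ref{ass:mesh} enters: the quasi-uniform-patch hypothesis $h_T\le c\,h_i$ and the uniform reference patch $\widehat P_i$ of unit measure let me pull everything back to $\widehat P_i$, where all constants are mesh-independent, and the single parameter $\eta_i=|T|/|\widehat T|$ tracks the scaling; summing the squared (or $q$-th power) local contributions over patches with finite overlap recovers the global bound with a constant independent of $h$.

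The main obstacle I anticipate is making the constant in~\eqref{constC} genuinely mesh-independent, i.e.\ controlling $\|M^{-1}\|$ uniformly. Because $M_{ji}=\int\varphi_j\beta_i$ mixes hat functions from overlapping patches, $M$ is not diagonal, and a crude bound on its inverse could pick up powers of the local mesh ratios. This is exactly why the reference-patch part of Assumption~\ref{ass:mesh} is needed rather than mere shape-regularity: it forces each patch to be an affine image of one of finitely many (or a uniformly controlled family of) reference configurations, on which the corresponding reference matrix $\widehat M$ is invertible with a uniform bound, and then the scaling~\eqref{eq:scaling} transfers this to $M$. A secondary technical point is verifying that the correction functions $\beta_i$ genuinely lie in $W^{1,q}_0(\Omega)$ (vanishing on $\partial\Omega$) even for patches touching the boundary — handled by only using interior vertices and bubbles vanishing on element boundaries — and that the supports' finite overlap (again from shape-regularity) keeps the summation in the final estimate under control. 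Once the Fortin operator is built and these scaling estimates are in place, conditions~\eqref{constC} and~\eqref{Fortin:c3} are verified and the proposition follows; the details are deferred to Appendix~\ref{appendix}.
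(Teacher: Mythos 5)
The decisive step is missing. Your plan reduces to the claim that one can pick local functions $\beta_i\in\mathbb P_2(\mathcal T_h)\cap W^{1,q}_0(\Omega)$, supported on the patches $P_i$, such that the matrix $M_{ji}=\int_\Omega\varphi_j\beta_i$ is invertible with a uniformly controlled inverse; you yourself flag this as ``the main obstacle'' and then dispose of it by appealing to Assumption~\ref{ass:mesh}. But Assumption~\ref{ass:mesh} only provides a reference configuration \emph{per patch}; the matrix $M$ couples neighbouring vertices (each $\beta_i$ overlaps the hats $\varphi_j$ of adjacent vertices), so it is a globally coupled matrix for which there is no single reference matrix $\widehat M$, and no uniform bound on $\|M^{-1}\|$ in the scaling needed for~\eqref{constC} follows from the assumption as stated. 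The paper's proof resolves exactly this point by an explicit construction (Lemma~\ref{lem:orthogonality}): it builds a \emph{bi-orthogonal} system $\{\psi_i\}\subset\mathbb P_2(\mathcal T_h)\cap W_0^{1,q}(\Omega)$ with $\int_\Omega\varphi_j\psi_i=\eta_i\delta_{ij}$, obtained on the reference patch by taking, on each triangle $\widehat T_j$ of the patch, $\widehat\psi_i=\kappa(s-1)(5s-3)$ in local coordinates; this quadratic is continuous across interior edges, vanishes on the patch boundary, is exactly orthogonal to all exterior hat functions, and pairs nontrivially with the central hat. With that system the ``matrix'' is diagonal, the coefficients are explicit, $\alpha_i=\eta_i^{-1}\int_\Omega\varphi_i(v-\Pi_1 v)$, and the continuity estimate~\eqref{constC} follows from the local interpolation bound, the scaling estimate~\eqref{eq:psi_estimate}, quasi-uniformity of the patches, and the finite overlap of the $P_i$. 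Without producing such a system (or an equivalent uniform-invertibility argument, e.g.\ scaled diagonal dominance proved from a concrete choice of $\beta_i$), your proposal does not yet constitute a proof.

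Two secondary points. First, one of your candidate corrections, $\varphi_i$ multiplied by an edge bubble, is a cubic, and the proposition only guarantees $V_m\supseteq\mathbb P_2(\mathcal T_h)\cap W^{1,q}_0(\Omega)$, so the correction must be engineered inside $\mathbb P_2$ (as the paper does). Second, the base interpolant need not be the $\mathbb P_2$ Lagrange interpolant: the paper uses the $\mathbb P_1$ Lagrange interpolant (legitimate since $q>d$), which is all that the local estimate~\eqref{local::intepor} and the coefficient bound require; this is a harmless difference, but it shows the real content of the proof lies entirely in the bi-orthogonal construction, not in the choice of $I_h$.
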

\begin{proof}
See Appendix~\ref{appendix2}.
\end{proof}

\section{Applications}\label{applications}

\subsection{Point sources.}

As a first application,  we consider projections of \emph{Dirac delta} distributions (point sources, see Example~\ref{delta_example}). It is well known that this distribution  does not belong to the Hilbert space $H^{-1}(\Omega):=W_0^{-1,2}(\Omega)$ for dimensions higher or equal than two. 
In our case, we will consider standard Sobolev spaces\footnote{i.e., of integer order and without weighted norms.} in which the action of the \emph{Dirac delta} is linear and continuous as it was mentioned in Example \ref{delta_example}.

\subsubsection{One dimensional Dirac's Delta projection}
Given a partition $\mathcal{T}_h=\{T_i\}_{i=1}^{n}$ of $\Omega:=(0,1)$, we consider the trial spaces $G_n$ and $G_{N_v}$ defined in~\eqref{trialspace:1} and~\eqref{eq:G_nv}, together with the test space $V_m=\mathbb P_2(\mathcal{T}_h)\cap W_0^{1,q}(\Omega)$.%
We compute the mixed system~\eqref{fully1}, using the duality map related with the norm $\|\cdot\|_{W_0^{1,q}}$, i.e., 
$$
\left<\mathcal J_{W_0^{1,q}}(v),w\right>_{W^{-1,p},W_0^{1,q}}:=\sum_{i=1}^{d}\int_{\Omega} |\partial_i v|^{q-1}\hbox{sgn}\left(\partial_iv\right) \partial_i w,
\quad\forall v,w\in W_0^{1,q}(\Omega).
$$
For $p=q=2$ the duality map is linear. In Fig.~\ref{delta_projection_unif} we represent graphically the projections obtained for $\delta_{x_0}$ in such a case, with $x_0=0.5$ and the trial space $G_{N_v}$. 
We have considered uniform meshes of $n=16, 32$~\&~$64$ elements respectively.  Results are coherent with what is expected (cf.~\cite{hosseini2016regularizations}). 
\begin{figure}[htb]
\centering
\includegraphics[width=0.32\textwidth]{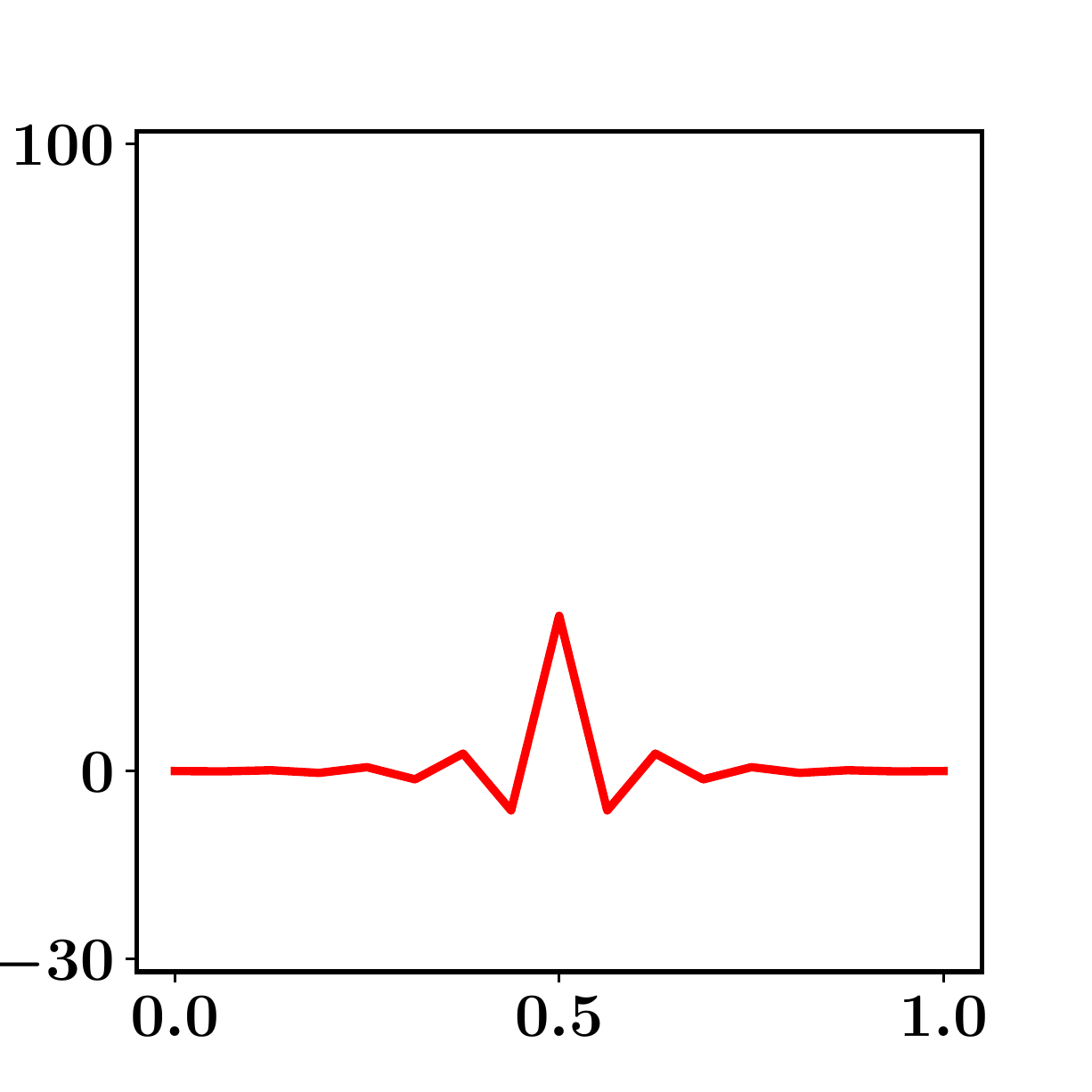}
\includegraphics[width=0.32\textwidth]{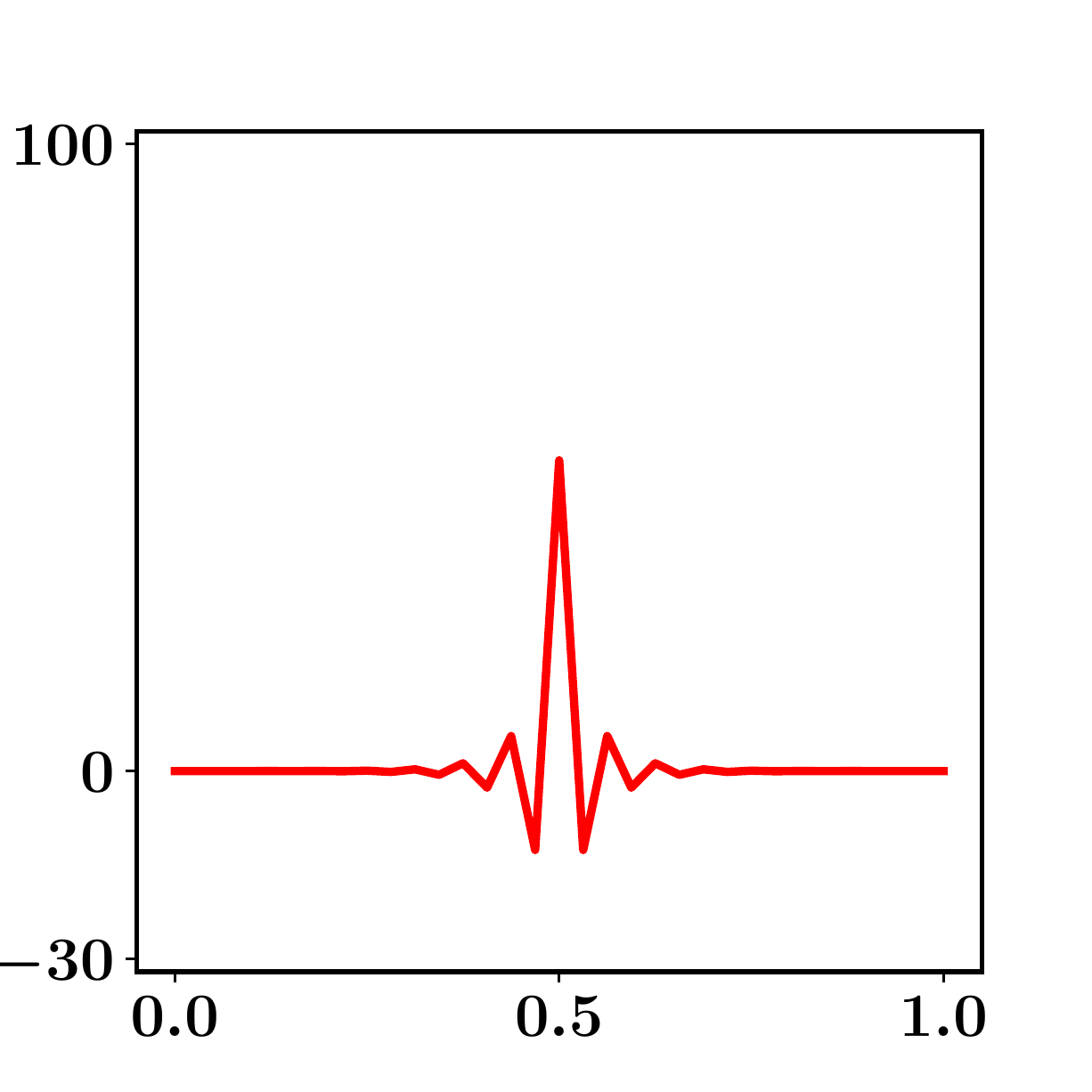}
\includegraphics[width=0.32\textwidth]{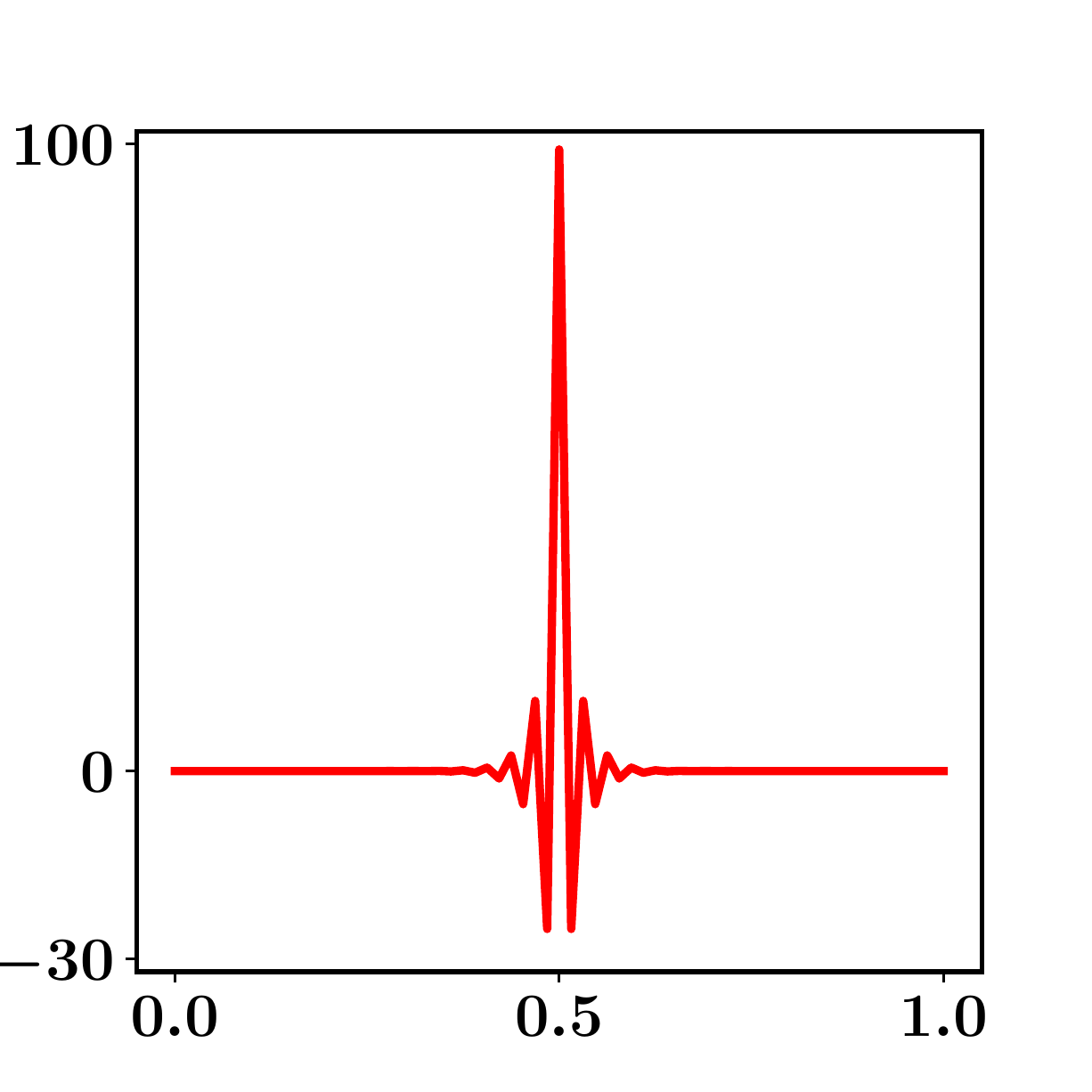}
\caption{Sequence of Dirac delta projections over uniform meshes of $n=16,32$ \& $64$ elements.}
\label{delta_projection_unif}
\end{figure}

For $p={q\over q-1}<2$, the duality map is nonlinear. Hence, we have resorted to a Newton-Raphson continuation method to solve numerically problem  \eqref{fully1}.  That is, we create a sequence of $(k+1)$ problems with parameters $2=p_0>\dots>p_{k-1}>p$, where in each step, problem~\eqref{fully1} is solved using the solution of the previous step as initial guess. 
Using the piecewise constant trial space $G_n$ defined in~\eqref{trialspace:1}, Fig.~\ref{uniform_error} depicts the convergence of the  residual  term  $\|r_m \|^{q-1}_{V}$ compared with total degrees of freedoms (i.e., $\mbox{dim}(G_n)+ \mbox{dim}(V_m)$) for uniform and adaptive $h$-refinements, and for several values of $p\in(1,2)$. Recall that, by Sobolev embeddings in 1D, the Dirac delta action is well-defined in $W_0^{s,q}(\Omega)\subset\mathcal C(\Omega)$ whenever $sq>1$ (see, e.g.,~\cite{adams2003sobolev}). The observed convergence rate for $h$-refinements is $1/p$, which can be seen as the difference between the regularity exponent $s=1$ and the critical regularity exponent $s^*=1/q$ (cf.~Remark~\ref{rem:rates}). On the other hand, since the source localizes in only one point, exponential convergence rates are observed for adaptive $h$-refinements.
The marking criteria has been set to refine all the elements showing local error larger than the 50\% of the maximum local error.
\begin{figure}[htb]
\centering
\includegraphics[width=0.7\textwidth]{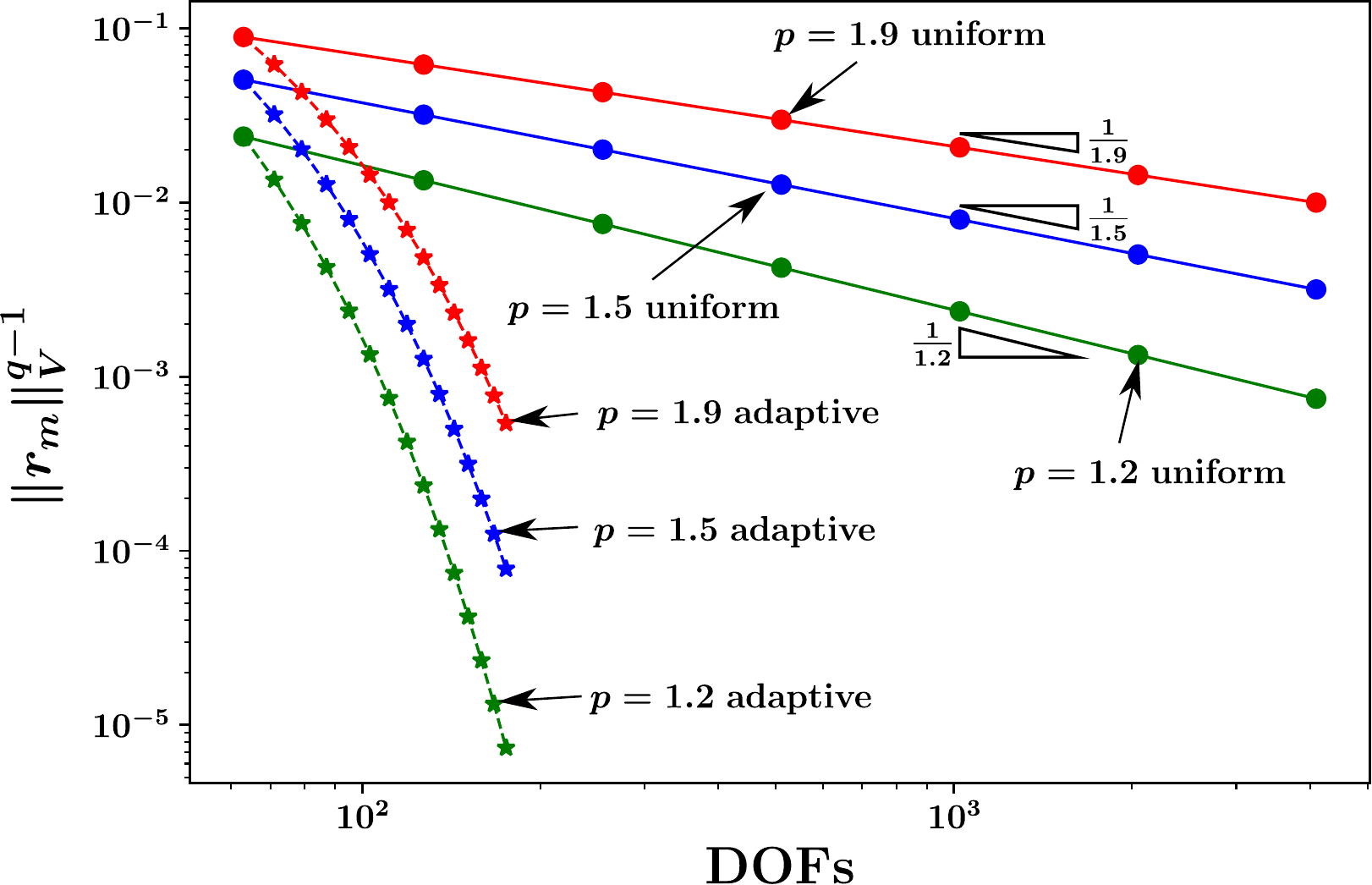}
\caption{1D Dirac delta: convergence rates of uniform and adaptive $h$-refinements for several values of $p$.}%
\label{uniform_error}
\end{figure}

\subsubsection{Elliptic ODE with projected Dirac delta source}

In this section, we test the performance of the projected Dirac delta acting as the source term of an elliptic ODE. Let $x_0\in\Omega:=(0,1)$ and $a\geq 0$. Consider the following {\it exact} problem:
\begin{equation}\label{ODE}
\left\{ \begin{array}{cl}
-u^{''}+a^2u&=\delta_{x_0}  \quad \mbox{ in } \Omega,\\
u(0)=u(1)&=0.
\end{array}\right.
\end{equation}
It is easy to check that the analytical solution of~\eqref{ODE} is:
\begin{equation}\label{exact:1}
\displaystyle u(x)=
\left\{
\begin{array}{ll}\displaystyle
{e^{ax_0}-e^{a(2-x_0)}\over 1-e^{2a}}{\sinh(ax)\over a} & \quad \hbox{ if }   0 \leq x \leq x_0,  \\\\\displaystyle
{\sinh(ax_0)\over a} {e^{ax}-e^{a(2-x)}\over 1-e^{2a}}  & \quad \hbox{ if }  x_0 \leq x \leq 1.
\end{array}
\right.
\end{equation}
The case $a=0$ can be obtained performing the limit when ${a\to 0}$ in~\eqref{exact:1}, in whose occurrence the solution is piecewise linear and continuous (see Figure~\ref{fig:ODE_uniform}).
Let $\delta_n\in G_n$ be the piecewise constant projection of the Dirac delta, computed using the $\mathbb P_0/\mathbb P_2$ compatible pair, with $p=q=2$. Observe that $\delta_n\in L^2(\Omega)$, which induces the following regularized problem: 
\begin{equation}\label{ODE:REG}
\left\{ \begin{array}{cl}
-u_n^{''}+a^2u_n&=\delta_n, \quad \mbox{ in } \Omega, \\
u_n(0)=u_n(1)&=0.
\end{array}\right.
\end{equation}
\begin{figure}[htb]
\centering
\includegraphics[width=0.32\textwidth]{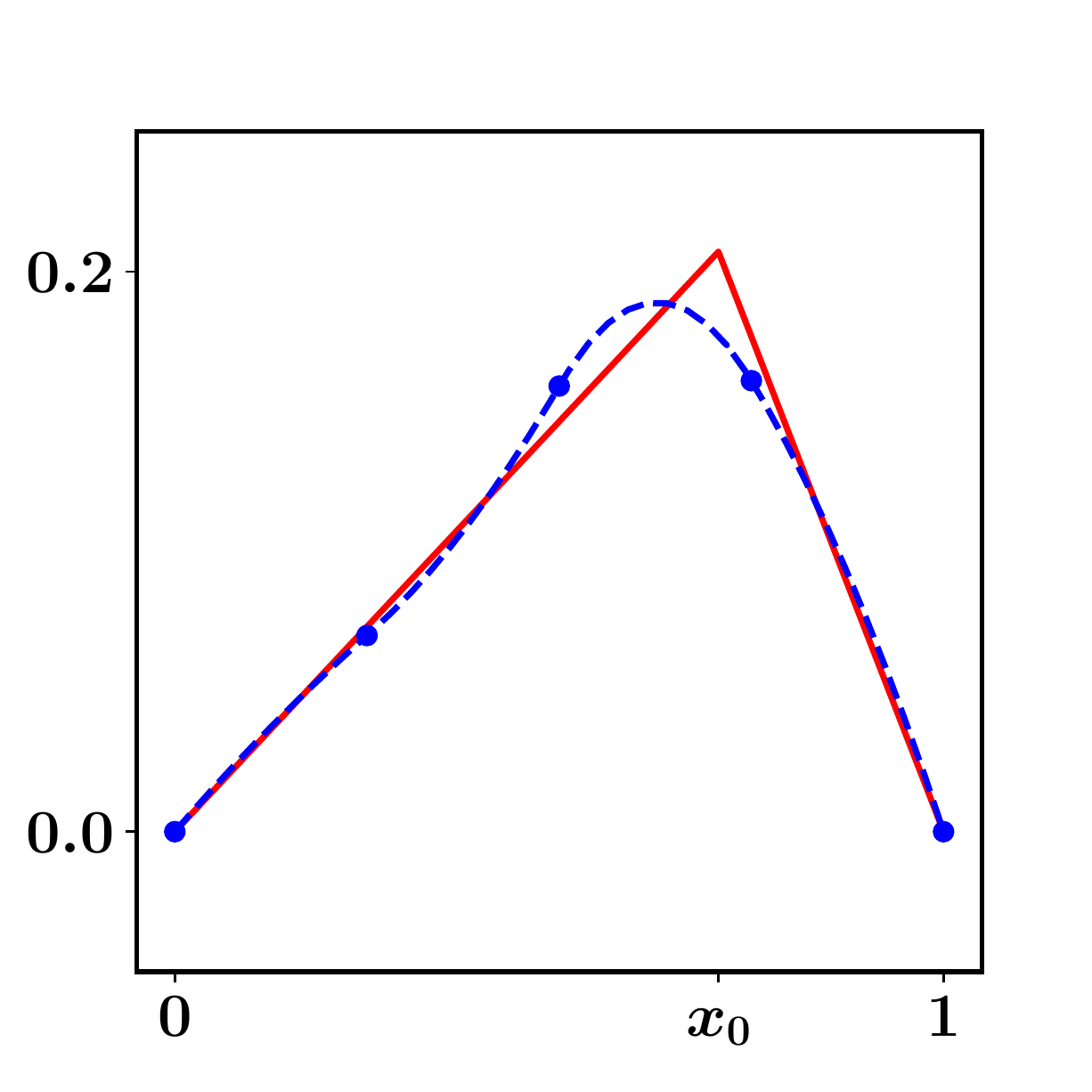}
\includegraphics[width=0.32\textwidth]{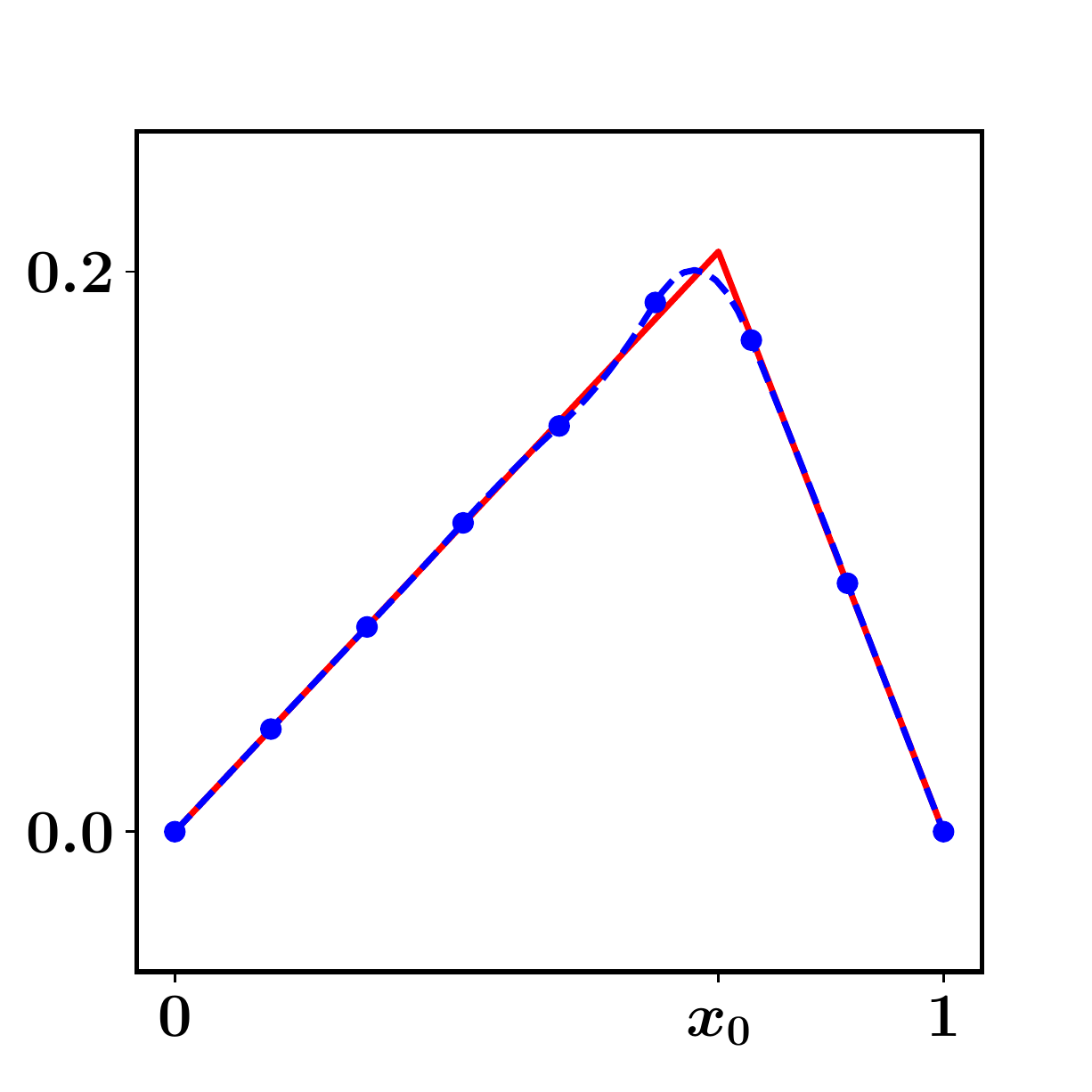}
\includegraphics[width=0.32\textwidth]{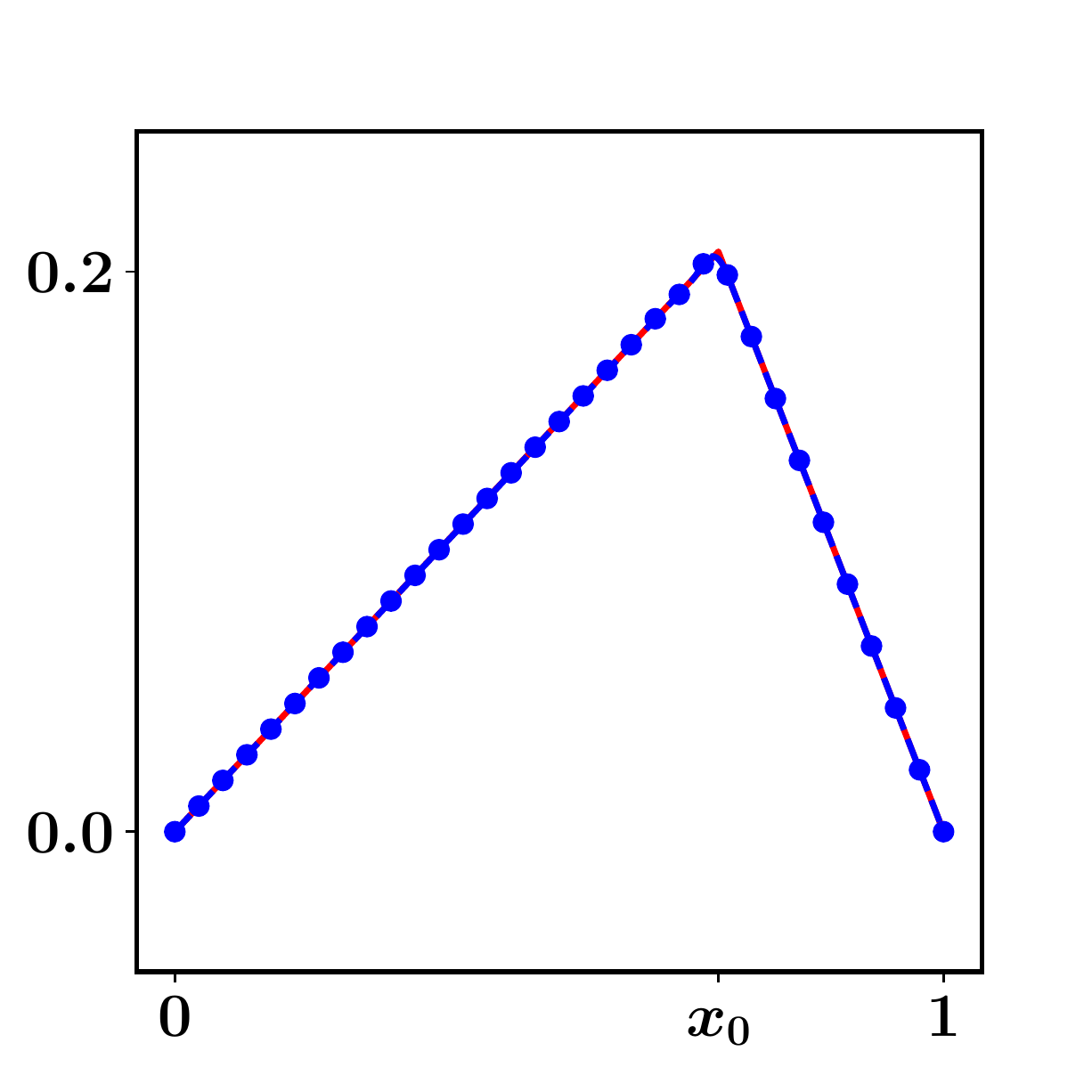}
\caption{Regularized solution $u_n$ of problem~\eqref{ODE:REG} (dashed line), compared with the exact solution~\eqref{exact:1} (continuous line), with Dirac delta supported on $x_0=\sqrt{2}/2$ and $a=0$. The inputs for $u_n$ have been obtained from projected Dirac deltas onto uniform meshes of $n=4, 8$ \& $32$ elements, respectively.}
\label{fig:ODE_uniform}
\end{figure} 
In Figure~\ref{fig:ODE_uniform}, we show the solutions obtained by approximating equation~\eqref{ODE:REG} using a Galerkin squeme with conforming $\mathbb P_2(\mathcal T_h)$ Lagrange finite elements over the same mesh that defines $\delta_n\in G_n$, which in this case corresponds to uniform meshes of $n=4, 8$ \& $32$ elements. We have chosen $x_0=\sqrt{2}/2$ to make sure that $x_0$ never coincides with a node of the meshes. Moreover, we have considered $a=0$, which implies that the exact solution of~\eqref{ODE:REG} is indeed contained in the discrete space $\mathbb P_2(\mathcal T_h)$, and thus, there is no discretization error. The convergence rates of the error $\|u-u_n\|_{H_0^1}$ coincide with the convergence rates of $\|\delta_{x_0}-\delta_n\|_{H^{-1}}$ (or, more precisely, with $\|r_m\|_V$), as can be observed from Figure~\ref{fig:rates1dLaplacian}.
\begin{figure}[htb]
\centering
\includegraphics[width=0.6\textwidth]{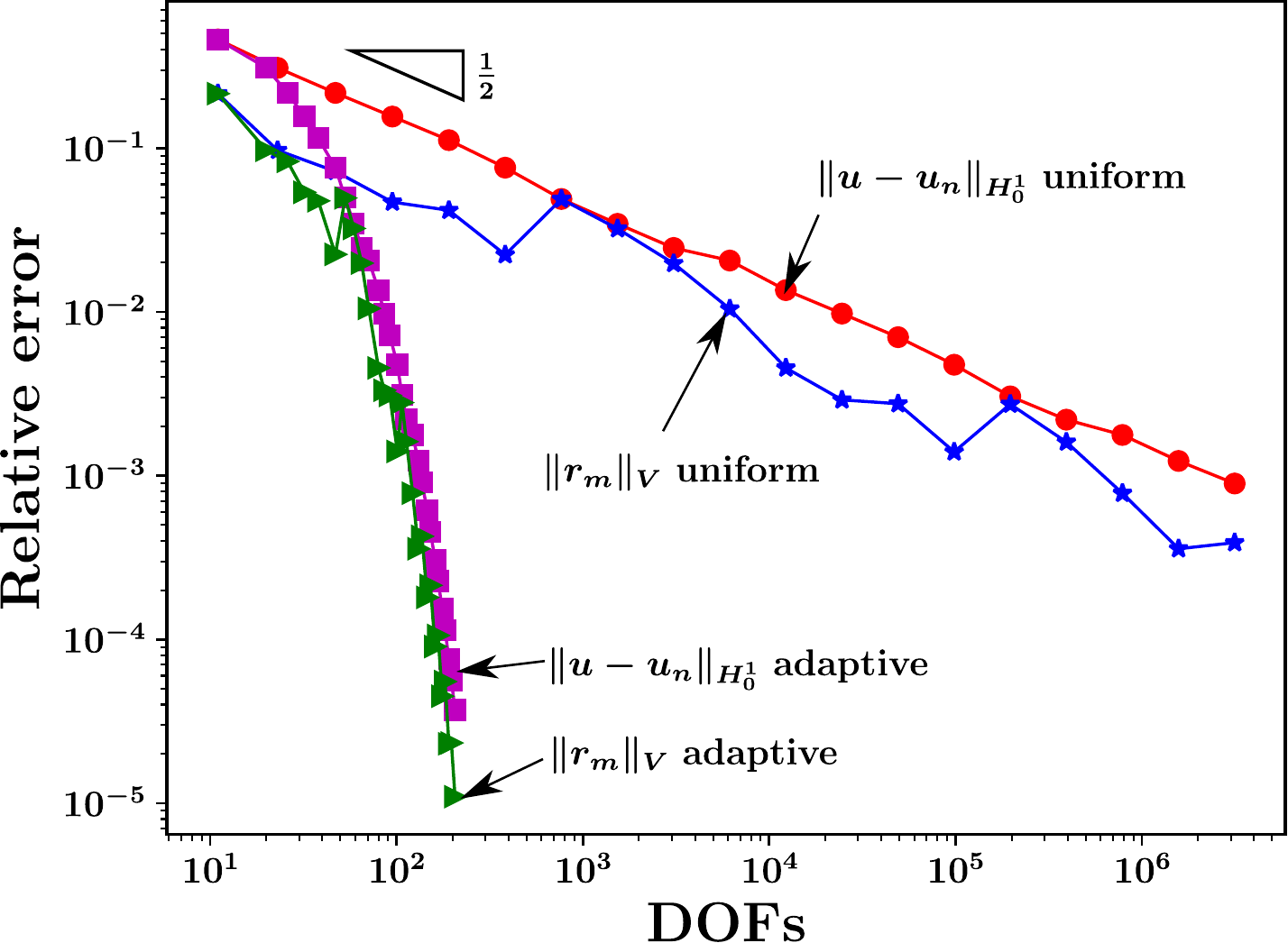}  
\caption{Convergence rates for the solution of the regularized problem~\eqref{ODE:REG}. %(error $\|u-u_n\|_{H_0^1}$ v/s DOF).
}
\label{fig:rates1dLaplacian}
\end{figure}

The next experiment intends to answer the question: {\it can we approximate the solution of problem~\eqref{ODE} up to a given desired precision?} 
Inspired in the adaptive algorithm proposed by Bonito et al.~\cite{NOCHETTO2013}, we propose a two-step adaptive procedure. The first step controls the {\it regularization error}, while the second step controls the {\it discretization error}. To fix ideas, let $u_{n,h}$ be the discrete approximation of problem~\eqref{ODE:REG} obtained by an adaptive procedure. Observe that:
$$
\|u - u_{n,h}\|_{H_0^1}\leq \|u - u_{n}\|_{H_0^1}+\|u_n - u_{n,h}\|_{H_0^1}\leq 
{1\over\gamma} \|\delta_{x_0}-\delta_n\|_{H^{-1}} +\|u_n - u_{n,h}\|_{H_0^1},
$$
where $\gamma>0$ is the stability constant of our differential operator ($\gamma=1$ in this particular example). In the first step, we perform an adaptive projection of the Dirac delta until reaching some prescribed tolerance. This will control the regularization error $\|\delta_{x_0}-\delta_n\|_{H^{-1}}$ and will deliver a projected delta $\delta_n$ together with an adapted mesh $\mathcal T_h=\{T_i\}_{i=1}^n$. In the second step, we use the source $\delta_n$ obtained in the first step and solve problem~\eqref{ODE:REG} adaptively considering $\mathcal T_h$ as the initial mesh, until reaching the prescribed tolerance. This will control de discretization error $\|u_n - u_{n,h}\|_{H_0^1}$. The general procedure  is depicted in Algorithm~\ref{algorithm}. In particular, for the second step we have used a standard local \emph{a posteriori} error estimator $ \eta_ {T_i}$, similar to the one used in~\cite{NOCHETTO2013}. Since the RHS $\delta_n$ is a piecewise constant function, there is no data oscillation in this case (see, e.g.~\cite{MorNocSie_SINUM2000}).
\begin{algorithm}
\begin{algorithmic}[1]
\State Global $\mbox{tol}>0$, $\alpha\in(0,1)$
\Procedure{Source regularization algorithm}{}
\State $\textit{Input} \gets \text{RHS}:=\delta_{x_0}, \text{mesh}:= \mathcal T_h=\{T_i\}_{i=1}^n$, % 
$q > 1$% 
\State $(r_m,\delta_n)\gets \text{Solve scheme } \eqref{fully1} $	
 	\While{$ \text{tol}<\|r_m\|^{q-1}_{V}$} 
		\If{$\|r_m\|^{q-1}_{V(T_i)}>\alpha \max \{\|r_m\|^{q-1}_{V(T_i)}\} $}
		 \State Refine the element $T_i$	
		\EndIf
		\State Update \text{mesh} $\mathcal{T}_h$ and go to step 3
\EndWhile
\Return  $\mathcal{T}_h$, $\delta_n$

\EndProcedure
\Procedure{Approximating the solution of the regularized PDE}{}	
	\State \textit{Input} $\gets \text{RHS}:=\delta_n, \text{mesh}:=\mathcal T_h$
	\State  $u_{n,h} \gets$ Solve the problem \eqref{ODE:REG} by Galerkin method
	\State Compute local \emph{a posteriori} estimators $\eta_{T_i}$
\While{$ \mbox{tol} \leq \sqrt{\sum\eta_{T_i}^2}$ } 	
	\If{$\eta_{T_i}> \alpha \max\{ \eta_{T_i}\}$}
	\State Refine the element $T_i$			
	\EndIf
	\State Update \text{mesh} $\mathcal{T}_h$ and go to step 10
\EndWhile 	
	\Return  $\mathcal T_h$, $u_{n,h}$.		
\EndProcedure
\end{algorithmic}
\caption{Approximating the solution of a PDE by regularization}
\label{algorithm}
\end{algorithm}  
Figure~\ref{fig:ExactVSApprox} (left) depicts the error in $H_0^1$ semi-norm of the adapted discrete solution v/s the chosen tolerance in Algorithm~\ref{algorithm}. 
We report here that smaller tolerances would lead to huge condition numbers in the second step of Algorithm~\ref{algorithm}, making results unreliable.
In Figure~\ref{fig:ExactVSApprox} (right), the final discrete solution, computed using Algorithm~\ref{algorithm} with $\mbox{tol}=0.001$ and $\alpha =0.5$, is compared with the exact solution.
We have set the values $x_0=\sqrt{2}/2$ and $a=2$ in~\eqref{ODE}.
\begin{figure}[htb]
\centering
\includegraphics[width=0.44\textwidth]{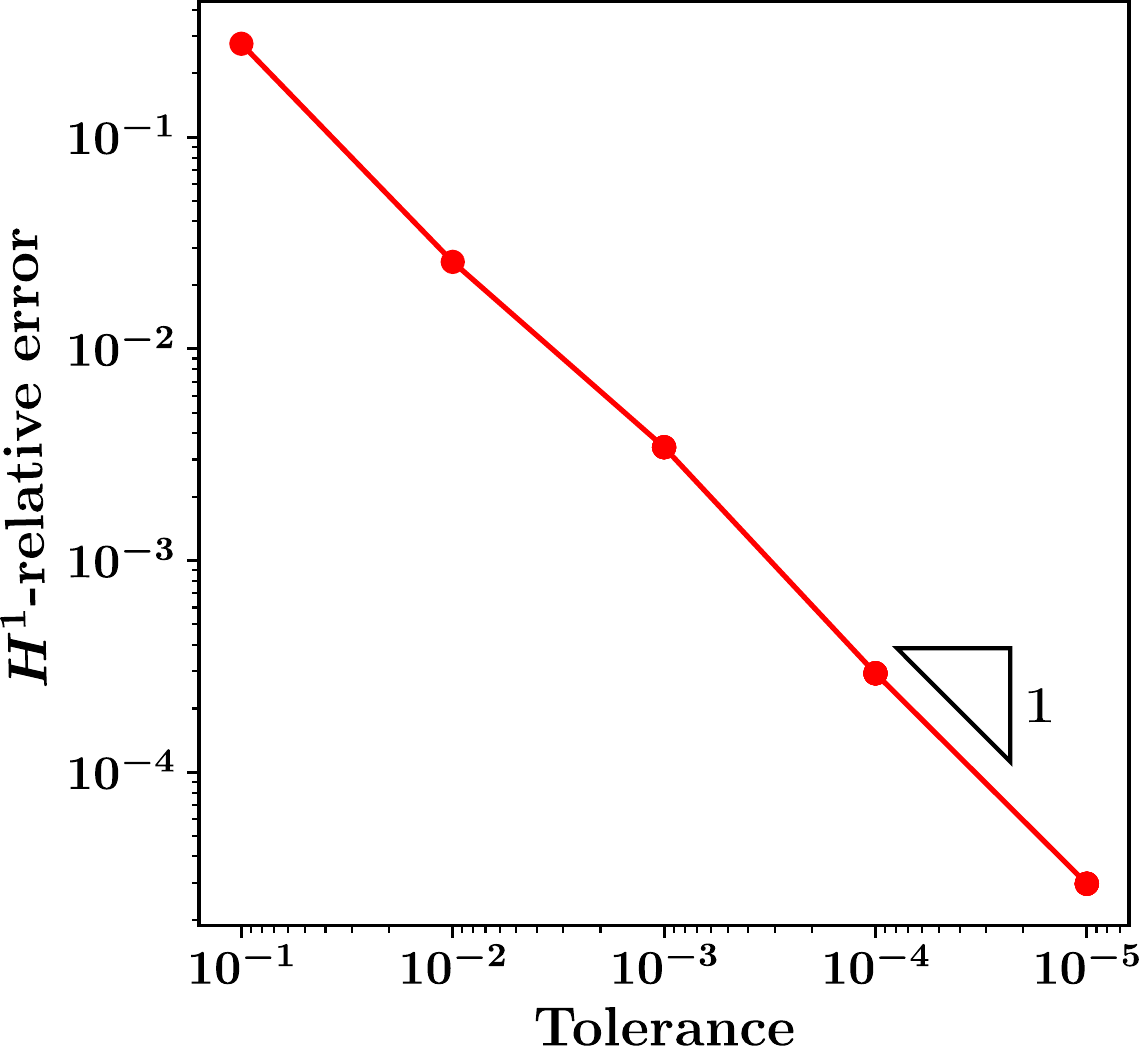}
\hspace{0.04\textwidth} 
\includegraphics[width=0.42\textwidth]{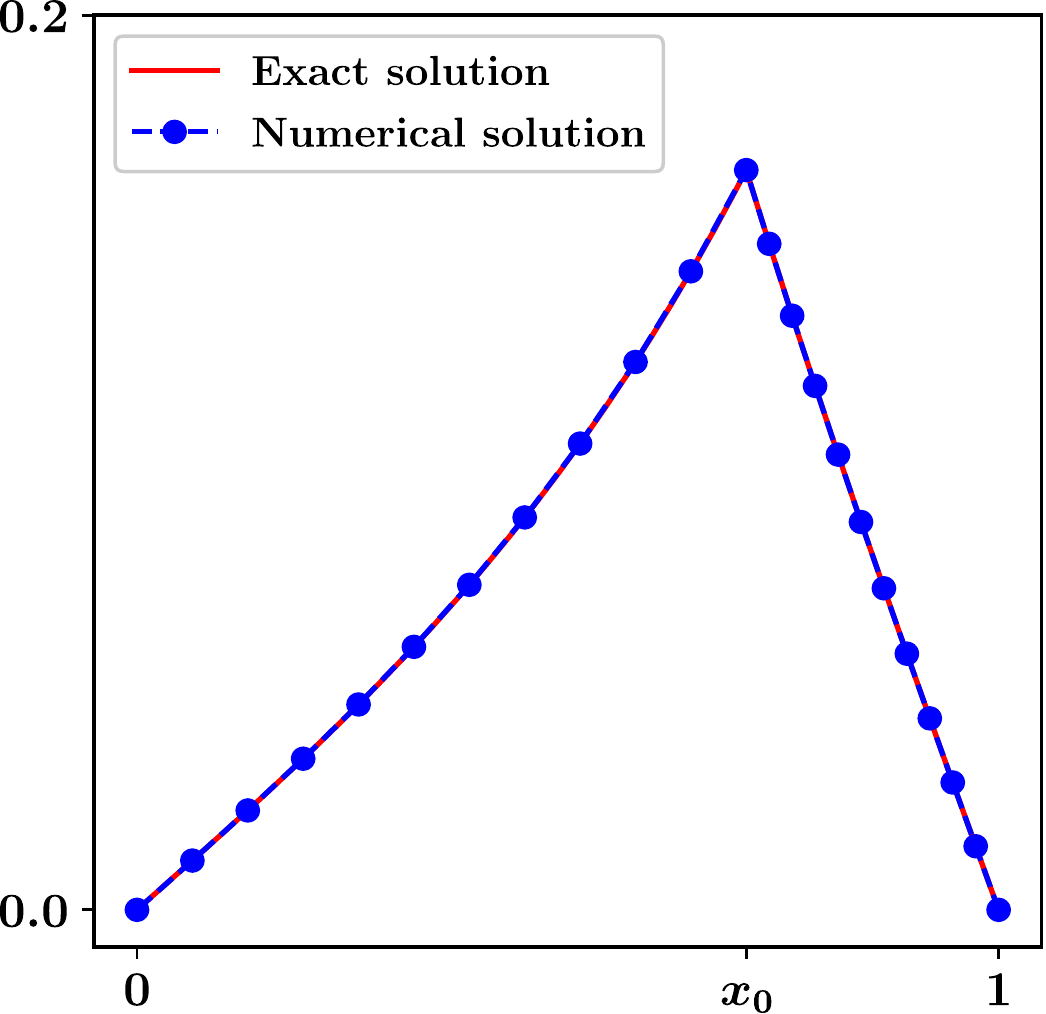}  
\caption{Left: Relative $\|u-u_{n,h}\|_{H_0^1}$ error v/s tolerance of Algorithm~\ref{algorithm}. Right: Exact solution of~\eqref{ODE} and its adapted numerical approximation using Algorithm~\ref{algorithm}.}
\label{fig:ExactVSApprox}
\end{figure}

\subsubsection{Two dimensional  Dirac's Delta projection.}
Let $\Omega\subset \mathbb{R}^2$ be the unitary circle centered at $x_0=(0,0)$ and  
 consider the Poisson problem: 
\begin{equation}\label{eq:Poisson2D}
\left\{ \begin{array}{rll}
-\Delta u &=\delta_{x_0} \qquad &\mbox{ in } \Omega, \\
u&=0 \qquad&\mbox{  on } \partial \Omega,
\end{array}\right.
\end{equation}
along with its exact solution:
\begin{equation}\label{exact2D}
u(x)= -\frac{1}{2\pi}\ln|x|.
\end{equation}
We are going to project the Dirac delta into a piecewise constant space $G_n$ and then proceed to approximate PDE~\eqref{eq:Poisson2D} using this projected input.
So let  $\{\mathcal{T}_h\}=\{T_i\}_{i=1}^n$ be a simplicial mesh and let $G_n$ be defined by~\eqref{trialspace:1}. In this example we take $V_m:=\mathbb P_1(\mathcal T_h)\cap W_0^{1,q}(\Omega) + \mathbb B_n(\mathcal T_h)$ (see expressions~\eqref{Bubble}~and~\eqref{eq:P1}). 
In Figure~\ref{error_delta_2d} we show convergence rates of the projected Dirac delta using uniform and adaptive $h$-refinements, for diverse choices of $p\in(1,2)$. As in the one dimensional case, we observe that  the convergence rates of uniform $h$-refinements are related with the difference between the regularity exponent $s = 1$  and the critical regularity exponent $s^* = 2/q$ (recall that in 2D we have $W_0^{s,q}(\Omega)\subset\mathcal C(\Omega)$ whenever $sq>2$).
%,  
The graphical representation is constructed in terms of the square root of the degrees of freedom (DOFs$^{1/2}$). %, 
 In the case of adaptive $h$-refinements, for each value of $p$, we present the first 15 iterations of the adaptive algorithm
using the marking criteria $\alpha=0.5$ (see Algorithm~\ref{algorithm}). Again, exponential convergence is observed due to the fact that the source is localized in only one point.
\begin{figure}[htb]
\centering
\includegraphics[width=0.7\textwidth]{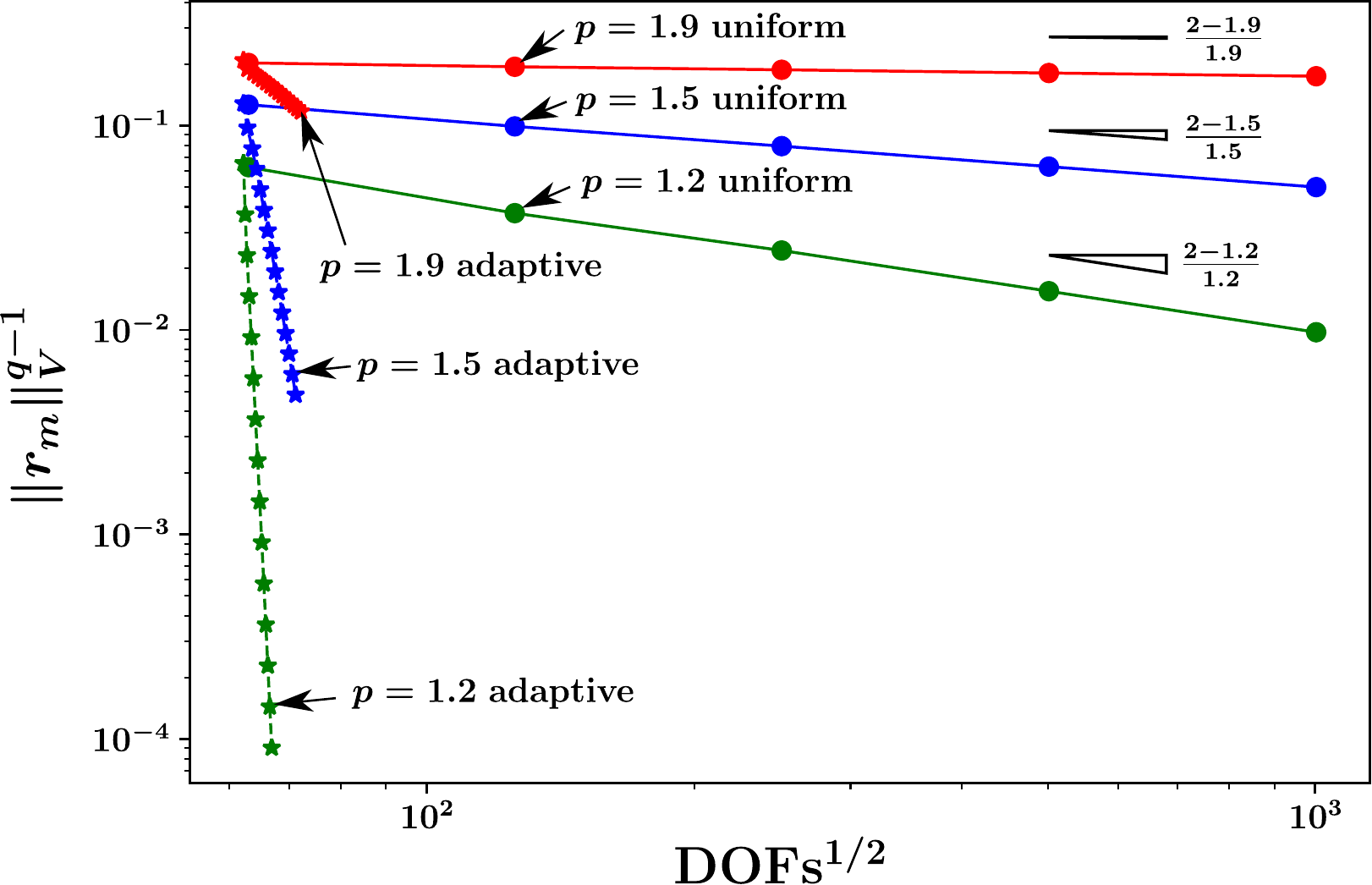}  
\caption{2D Dirac delta: convergence rates of uniform and adaptive $h$-refinements for several values of $p$.}% (residual $\|r_m\|_V^{q-1}$ v/s DOFs$^{1/2}$). }
\label{error_delta_2d}
\end{figure}

In Figure~\ref{fig:Green2D}, we can appreciate how numerical approximations of PDE~\eqref{eq:Poisson2D} are approaching the singular exact solution~\eqref{exact2D} when using regularized sources obtained from different steps of the adaptive $h$-refinements procedure. These results were obtained employing $p=1.9$ and marking criteria parameter $\alpha=0.5$ in the adaptive procedure.
For each adapted mesh $\mathcal T_h$, the PDE has been approached by means of a conforming $\mathbb P_1(\mathcal T_h)$ Galerkin squeme.
\begin{figure}[htb]
\centering
\includegraphics[width=0.31\textwidth]{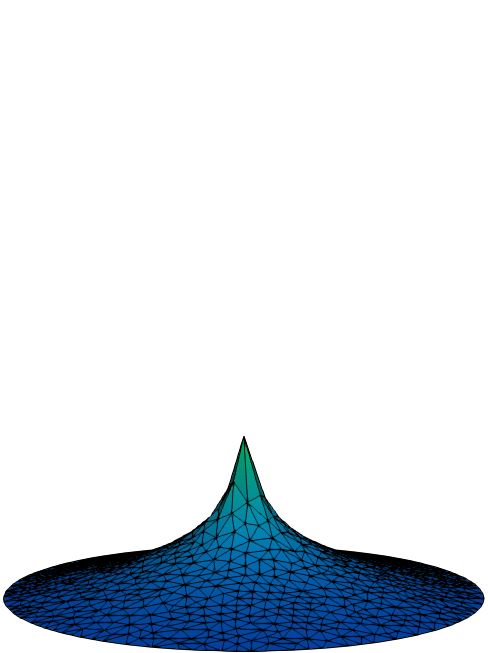} \hspace{0.01\textwidth}
\includegraphics[width=0.31\textwidth]{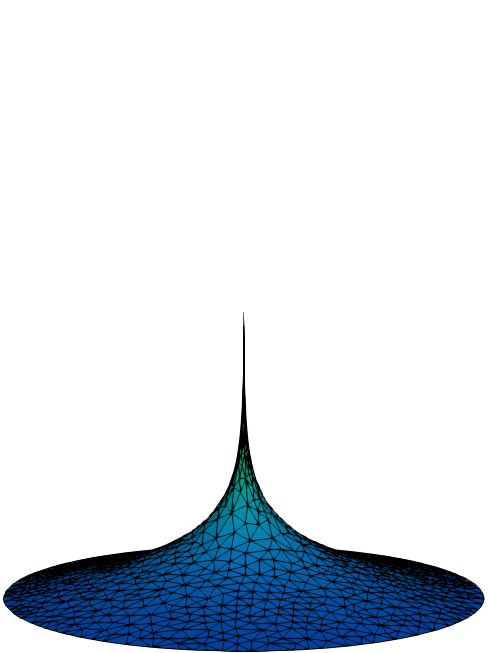}\hspace{0.01\textwidth}
\includegraphics[width=0.31\textwidth]{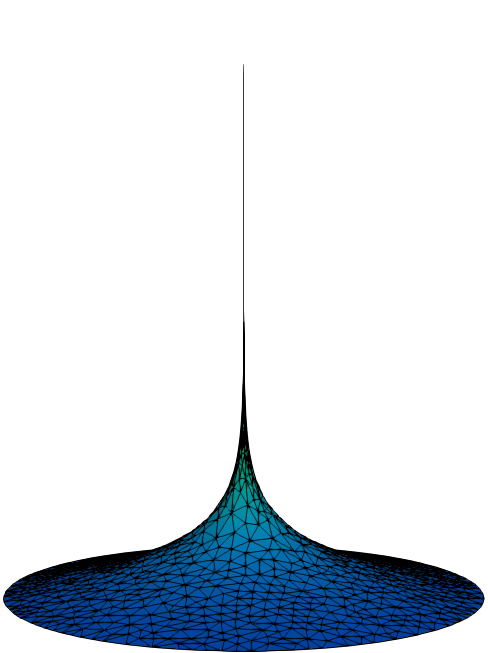}
\caption{Numerical approximations of the PDE~\eqref{eq:Poisson2D} using adaptive regularized sources.}
\label{fig:Green2D}
\end{figure}

\subsection{Line Source}\label{sec:line_source}
The last experiment is inspired by Example~\ref{ex:line_sources}. Let  $\Omega:=(0,1)^2\subset \mathbb{R}^2$ be the unit square, and let $\Gamma\in\overline{\Omega}$ be the segment:
$$\Gamma:=\left\{ \left( t,(t-0.5)^2+0.5\right)\in \mathbb{R}^2\,:\, 0.15\leq t\leq 0.85  \right\}.$$
We are going to project the linear functional $\ell\in W^{1,q}(\Omega)$ defined by:
$$
\ell(v)=\int_\Gamma v\,ds.$$ 
More generally, $\ell$ is well-defined, linear and continuous, for any $v\in W^{s,q}(\Omega)$, whenever $sq\geq 1$. Indeed, $\left.v\right|_{\Gamma}\in W^{s-{1\over q},q}(\Gamma)$.
Let $\mathcal{T}_h=\{T_I\}_{i=1}^{n}$ be a simplicial mesh (not necessarily aligned with $\Gamma$), and let us  consider the discrete spaces $G_n$ (defined in~\eqref{trialspace:1}) and $V_m:=\mathbb P_1(\mathcal T_h)\cap W_0^{1,q}(\Omega) + \mathbb B_n(\mathcal T_h)$ (see expressions~\eqref{Bubble}~and~\eqref{eq:P1}). 
\begin{figure}
\centering
\includegraphics[width=0.7\textwidth]{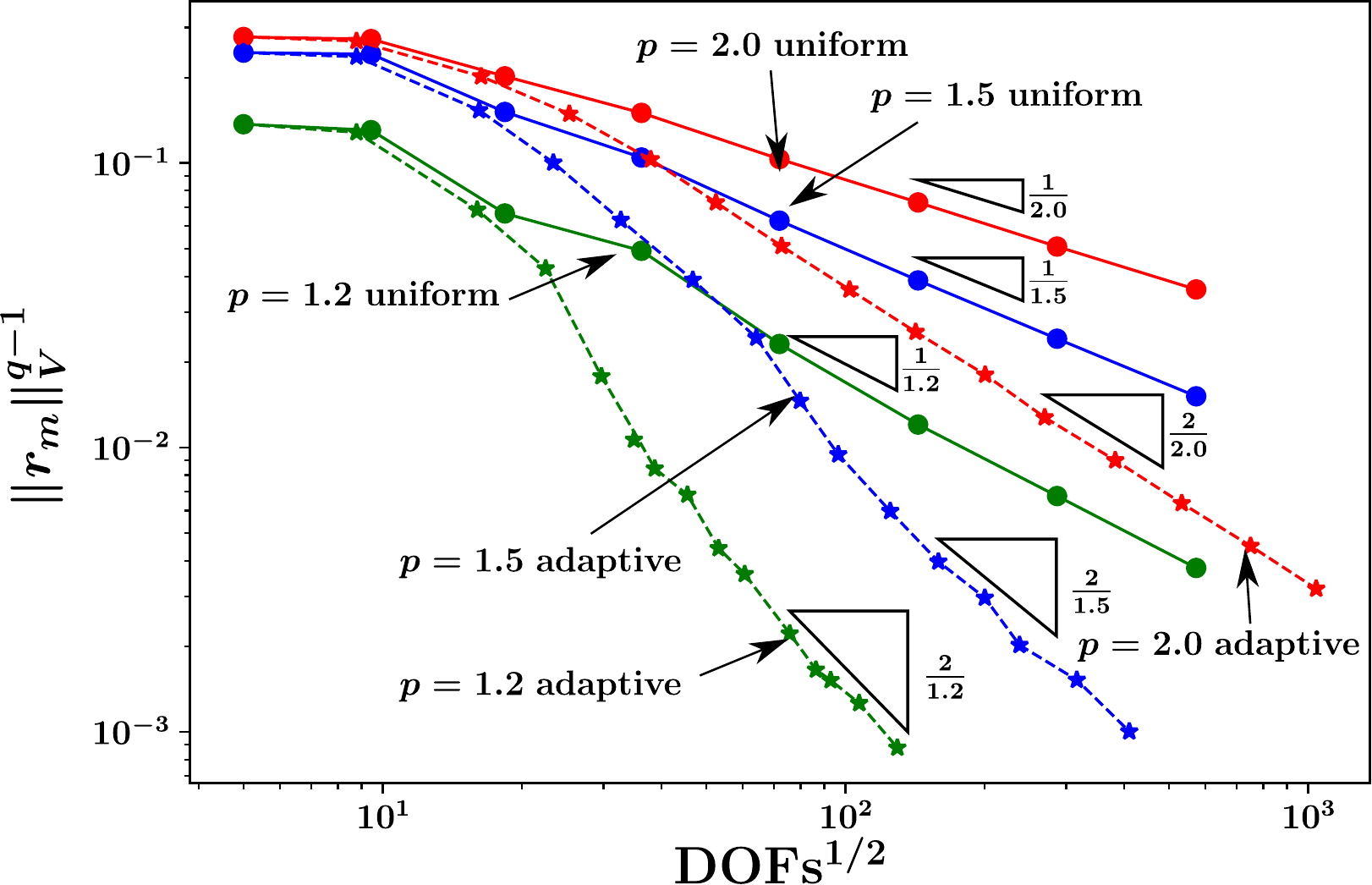}
\caption{Line source: convergence rates of uniform and adaptive $h$-refinements for several values of $p$.}% 
\label{fig:crack_error}
\end{figure}
In Figure~\ref{fig:crack_error}, we show convergence rates of the projection of $\ell$ using uniform and adaptive $h$-refinements, for several values of $p$. We observe that the convergence rates of uniform $h$-refinements are close to the difference between the current regularity exponent $s = 1$ and the critical regularity exponent $s^* = 1/q$ (cf.~Remark~\ref{rem:rates}). This graphical representation is constructed in terms of the square root of the degrees of freedom (DOFs$^{1/2}$). %,  
We have observed that adaptive $h$-refinements practically double the rates of uniform $h$-refinements.
%,  
%
\begin{figure}
\centering
\includegraphics[width=0.3\textwidth]{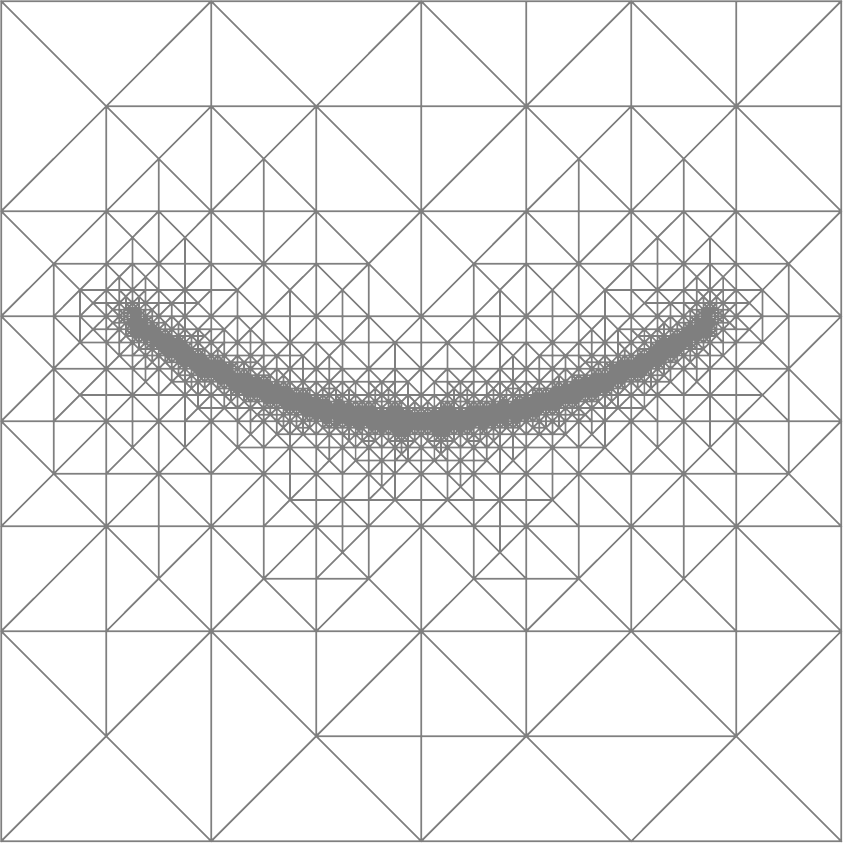} \hspace{0.01\textwidth}
\includegraphics[width=0.3\textwidth]{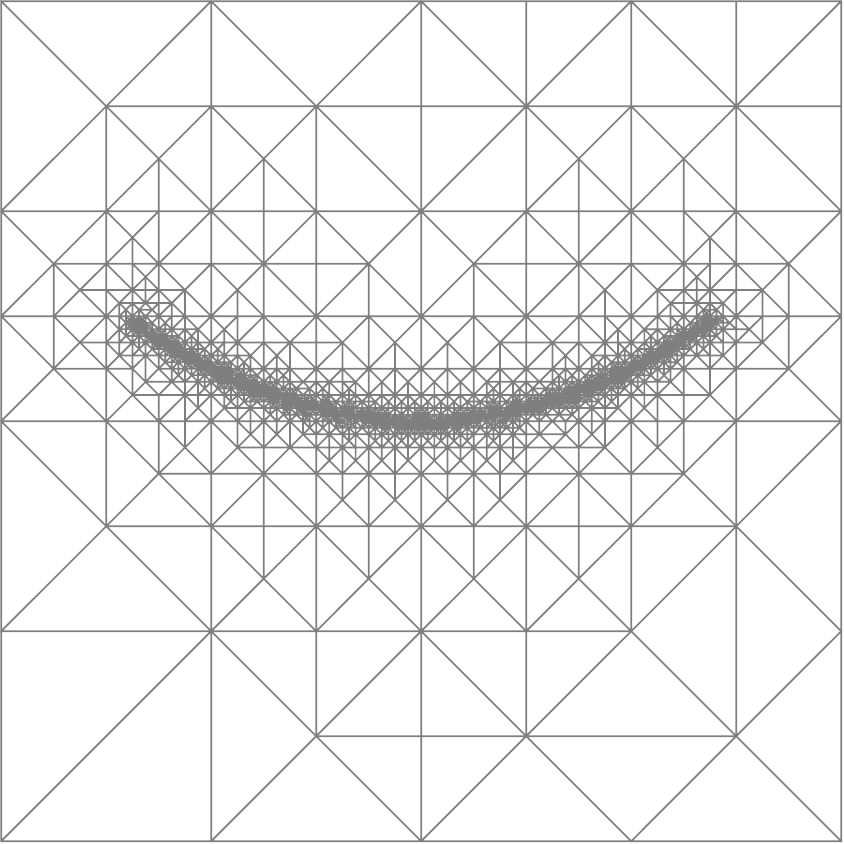} \hspace{0.01\textwidth}
\includegraphics[width=0.3\textwidth]{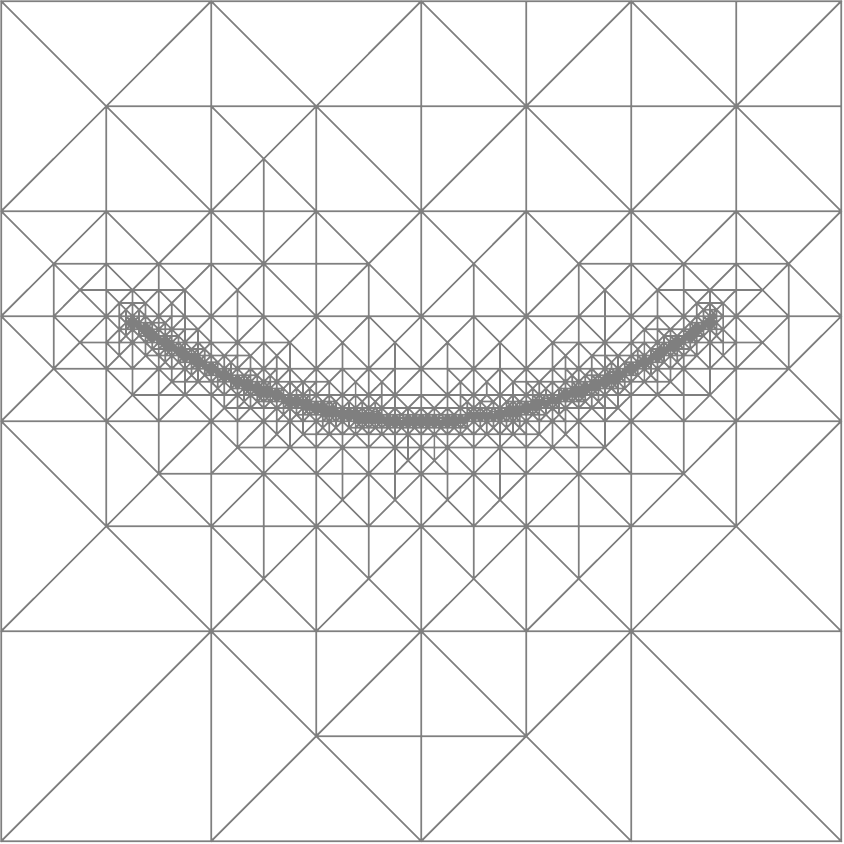}
\caption{Line source: Adaptive mesh at a comparable NDOFs, for $p=2,1.5$ \& $1.2$,  respectively.}
\label{fig:mesh_line}
\end{figure}
On another hand, Figure~\ref{fig:mesh_line} shows a sequence of adaptive meshes, obtained for $p=2, 1.5$ and $1.2$, at a comparable number of degrees of freedom (NDOFs), using the marking criteria parameter $\alpha=0.4$. As expected, refinements are concentrated along the support of the distribution $\ell$, i,e., $\Gamma$. We observe better localization of refinements as the value of $p$ decreases. 

\section{Conclusions}\label{sec:conclusions}

Based on the recent theory of residual minimization in Banach spaces developed in~\cite{MugVdZ_SINUM2020}, we proposed in this work a method to regularize rough linear functionals, projecting them into piecewise polynomial spaces. The projections has been performed in terms of discrete-dual Banach norms. 
Particularly, we have studied functionals involving actions over test functions with a certain regularity, i.e., functionals in negative Sobolev spaces. %,
Our approach has two remarkable advantages. First, the regularization can be obtained within low-order piecewise polynomial spaces. Therefore, if such a regularization is used on the right-hand-side of a finite element system, then exact numerical integration can be implemented via Gaussian quadrature formulae. Second, as every residual minimization approach does, the method computes a built-in residual representative, which is proven to be a reliable and efficient \textit{a posteriori} error estimator. Indeed, we have used this estimators to drive adaptive procedures delivering regularized functionals up to any desired precision in the underlaying discrete-dual norm.
We have observed superior performance of adaptive $h$-refinements in terms of convergence rates.

\par

On another hand, in terms of the discrete stability of our method, we exhibit 
two compatible trial-test discrete pairing that can be used in every problem involving rough functionals acting on $W_0^{1,q}(\Omega)$ (see Propositions~\ref{Fortin_space} and~\ref{prop:compatible2}).

Some future research challenges may include the regularization of rougher functionals, such as dipole sources in electroestatics~\cite{AloCamRodVal_CAMWA2014} or derivatives of Dirac deltas, which appear naturally in high-order PDEs modeling of elastic plates and beams~\cite{YavSarMoy_IJSS2000}.

\appendix
\section{Proof of Proposition 9}%
\normalsize
\label{appendix}
Through this proof, the symbol $\lesssim$ will denote \emph{less or equal} up to a mesh-independent constant.
Recall that we are under the hypothesis of shape-regular simplicial meshes $\mathcal T_h=\{T_i\}_{i=1}^n$ and $q>d$. 
Let $\Pi_1:V\to \mathbb P_1(\mathcal T_h)$ be the \emph{Lagrange interpolant} operator, such that for any $T_i\in\mathcal T_h$, the restriction 
$\Pi_1(v)|_{T_i}\in \mathbb{P}_1(T_i)$ satisfies the local  estimation
\begin{equation}\label{local::intepor}
\|\Pi_1 v -v \|_{L^q(T_i)}\lesssim  h_i \|\nabla v \|_{L^q(T_i)},\qquad\forall v\in V,
\end{equation}
where $h_i=\mbox{diam}(T_i)$ (see, e.g.,~\cite[Theorem~1.103]{ern2013theory}). Moreover, $\Pi_1$ is uniformly bounded with respect to mesh parameters (see, e.g.,~\cite[Corollary~1.109]{ern2013theory}), i.e.,  
\begin{equation}\label{bound::interpolant}
\| \Pi_1 v \|_{V}\lesssim\|v \|_{V},\qquad\forall v\in V.%
\end{equation}
For each simplex $T_i\in\mathcal{T}_h$ let $b_i:T_i\to\mathbb R$ be the bubble function defined in~\eqref{eq:bubble}. Consider now the (Fortin) operator $\Pi: V\mapsto V_m$ locally defined by:
\begin{equation}\label{eq:Pi}\notag
\Pi v|_{T_i}:= \left.\Pi_1 v\right|_{T_i} + \alpha_i b_i,
\end{equation}
where $\alpha_i\in\mathbb R$ is chosen such that the following equality holds true:
\begin{equation}\label{Fortin::eq1}
\int_{T_i} \Pi v=\int_{T_i} v,\qquad \forall v\in V.
\end{equation}
Solving for $\alpha_i$ we get:
\begin{equation}\label{Fortin::eq2}
\alpha_i= \left(\int_{T_i} b_i \right)^{-1} \int_{T_i}(v-\Pi_1 v).
\end{equation}

We will proceed to prove that  $\Pi $ satisfies the  Fortin conditions~\eqref{constC} and~\eqref{Fortin:c3}. First, for any $g_n=\sum_{i=1}^n\beta_i\mathcal G_i\in G_n$, and any $v\in V$, we have:
\begin{align*}
\langle g_n, \Pi v\rangle _{V^*,V}&=\sum_{i=1}^n \beta_i \int_{T_i} \Pi v
=\sum_{i=1}^n \beta_i  \int_{T_i} v=\langle g_n,v\rangle_{V^*,V}
\tag{by~\eqref{trialspace:1} and~\eqref{Fortin::eq1}}.%
\end{align*}
Hence, condition \eqref{Fortin:c3} is satisfied.  To prove~\eqref{constC}, 
let us consider the reference element $\hat{T}$  and the affine mapping $F_i: \hat{T}\to T_i$ such that $F_i(\hat{x})=A_i\hat{x}+y_i $. Thus, if $b_{\hat{T}}$ denotes the bubble in $\hat{T}$, then:
$$
b_i(x)=b_{\hat{T}}\circ F_i^{-1}(x), \qquad \forall x\in T_i.
$$
Moreover, we have the following estimations (see~\cite[Lemma 1.100]{ern2013theory}):
$$
|\det A_i |= \frac{|T_i|}{|\hat{T}|}\qquad\mbox{ and }\qquad \| A_i^{-1}\|\leq \frac{h_{\hat{T}}}{\rho_i}
$$
where $\rho_i$ is the radius of the largest ball inscribed in $T_i$ and $\|\cdot\|$ is the matrix norm subordinated to the Euclidean norm in $\mathbb R^d$. Notice that 
$h_{\hat{T}}$ and $|\hat{T}|$ do not depend on the mesh.  
Using the change-of-variables theorem we get:
$$
\int_{T_i}b_i=  \int_{\hat{T}}b_{\hat{T}}|\det A_i|=\frac{|T_i|}{|\hat{T} |}\int_{\hat{T}}b_{\hat{T}} = C |T_i|,
$$
where $C>0$ is a mesh independent constant.
Hence, we can estimate $|\alpha_i|$ (see~\eqref{Fortin::eq2}) as follows: 
\begin{align}\label{alpha::estimate}
|\alpha_i |\lesssim 
\frac{1}{|T_i|} \| v-\Pi_1 v\|_{L^q(T_i)} | T_i|^{1\over p}%
= {1\over |T_i|^{1\over q}}\| v-\Pi_1 v\|_{L^q(T_i)}
\lesssim  {h_i\over |T_i|^{1\over q}}\| \nabla v\|_{L^q(T_i)},
\end{align}
where we have used~\eqref{local::intepor} in the last inequality.
Additionally, we estimate the bubble function semi-norm by the following classical result (see~\cite[Lemma 1.101]{ern2013theory}):
\begin{equation}\label{bubble::estima1}
\|\nabla b_i \|_{L^{q}(T_i)}\lesssim   \|A_i^{-1}\| |\det A_i |^{1\over q}\|\nabla b_{\hat{T}} \|_{L^q(\hat{T})}
\lesssim {|T_i|^{1\over q}\over \rho_i}.
\end{equation}
Combining~\eqref{alpha::estimate} and~\eqref{bubble::estima1} we get:
\begin{equation}\label{eq:alphab}
|\alpha_i |\| \nabla  \mathit{b}_{T_i}\|_{L^q(T_i)}\lesssim {h_i\over \rho_i}\| \nabla v\|_{L^q(T_i)} \lesssim\| \nabla v\|_{L^q(T_i)},
\end{equation}
where the last inequality holds true because of the shape-regularity of the mesh. 
Finally,~\eqref{constC} holds true since
\begin{align*}
\|\nabla  \Pi v\|^q_{L^q(\Omega)} &= \sum_{i=1}^n\|\nabla\Pi v \|^q_{L^q(T_i)}\leq 2^{q-1}\sum_{i=1}^n \left(  \|\nabla \Pi_1 v \|^q_{L^q(T_i)} +|\alpha_i|^q \| \nabla b_i \|^q_{L^q(T_i)}\right)\\% 
&\lesssim  \| \nabla \Pi_1 v\|^q_{L^q(\Omega)}+ \sum_{i=1}^n \| \nabla v \|^q_{L^q(T_i)}  \lesssim  \|\nabla v\|^q_{L^q(\Omega)},
\end{align*}
where we have used H\"older inequality, together with~\eqref{bound::interpolant} and~\eqref{eq:alphab}.

%%%%%%%%%%%%%%%%%%%%%%%%%%%%%%%%%%%%%%%%%
\section{Proof of Proposition 10}%
\label{appendix2}
Again, through this proof, the symbol $\lesssim$ will denote \emph{less or equal} up to a mesh-independent constant. The proof of Proposition~\ref{prop:compatible2} requires the following previous lemma.
\begin{lemma}\label{lem:orthogonality}
Under the hypothesis of Assumption~\ref{ass:mesh}, 
let $\{\varphi_i\}_{i=1}^{N_v}$ be the set of nodal basis functions spanning $\mathbb P_1(\mathcal T_h)\cap W_0^{1,p}(\Omega)$. 
There exists a (bi-orthogonal) set $\{\psi_i\}_{i=1}^{N_v}\subset\mathbb P_2(\mathcal T_h)\cap W_0^{1,q}(\Omega)$ such that:
\begin{equation}\label{eq:orthogonality}
\int_\Omega \varphi_i\psi_j = \eta_i\,\delta_{ij}, \quad\forall i,j=1,...,N_v,
\end{equation}
where $\delta_{ij}$ denotes the Kronecker delta, and $\eta_i$ denotes scaling constant of the patch $P_i=\overline{\operatorname{supp}\varphi_i}$ (see Assumption~\ref{ass:mesh}). 
Moreover we have the estimate:
\begin{equation}\label{eq:psi_estimate}
\|\nabla\psi_i\|_{L^q(\Omega)}\lesssim {\eta_i^{1\over q}\over \displaystyle\min_{T\in P_i}\rho_T}, \quad \forall i=1,...,N_v,
\end{equation}
where $\rho_T$ denotes the diameter of the largest ball that can be inscribed in $T$.
\end{lemma}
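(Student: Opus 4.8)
\textbf{Proof plan for Lemma~\ref{lem:orthogonality}.}

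The plan is to construct the bi-orthogonal family $\{\psi_i\}$ patch by patch on a reference configuration, and then transport it back by the affine maps $F_T$. First I would fix an index $i$ and work on the reference patch $\widehat P_i$ of unit measure supplied by Assumption~\ref{ass:mesh}. On $\widehat P_i$ the pulled-back hat function $\widehat\varphi_i$ is a fixed (up to finitely many combinatorial types of patch) continuous piecewise-linear function vanishing on $\partial\widehat P_i$. I would then seek $\widehat\psi_i\in\mathbb P_2(\widehat{\mathcal T}_i)\cap W_0^{1,\infty}(\widehat P_i)$, supported in $\overline{\widehat P_i}$, satisfying $\int_{\widehat P_i}\widehat\varphi_i\,\widehat\psi_i = 1$. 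Existence is immediate: the bilinear pairing $(\widehat\varphi_i,\widehat\psi)\mapsto\int_{\widehat P_i}\widehat\varphi_i\widehat\psi$ is not identically zero on $\mathbb P_2$ (take $\widehat\psi$ a positive bump, e.g. a sum of edge/face bubbles of the sub-simplices, which is strictly positive on the interior), so one normalizes. The key point is that $\widehat\psi_i$, and hence $\|\nabla\widehat\psi_i\|_{L^q(\widehat P_i)}$ and $\|\widehat\psi_i\|_{L^\infty(\widehat P_i)}$, can be bounded by a constant depending only on the shape-regularity constant and the constant $c$ in Assumption~\ref{ass:mesh} (these control how many combinatorial types of reference patch occur and the aspect ratios of the sub-simplices of $\widehat P_i$), so the bound is mesh-independent.

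Next I would define $\psi_i$ on the physical patch $P_i$ by pulling $\widehat\psi_i$ through the affine maps: on each $T\subset P_i$ with $F_T:\widehat T\to T$, set $\psi_i|_T := \widehat\psi_i\circ F_T^{-1}$. Because $\widehat\psi_i$ is globally continuous on $\widehat P_i$ and all elements of $P_i$ share the vertex $x_i$, and because the affine maps $F_T$ are mutually compatible on shared faces (this is exactly the content of $\mathcal T_h$ being a conforming mesh together with the reference-patch hypothesis in Assumption~\ref{ass:mesh}), the function $\psi_i$ is continuous across $\Omega$, piecewise $\mathbb P_2$, and vanishes outside $P_i$, hence $\psi_i\in\mathbb P_2(\mathcal T_h)\cap W_0^{1,q}(\Omega)$. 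For the orthogonality: since $\mathrm{supp}\,\psi_i\cap\mathrm{supp}\,\varphi_j$ has positive measure only when $i=j$ (two distinct interior hat functions whose patches overlap only on a set of measure zero when $i\neq j$ — actually they may share elements, so I must check this more carefully: for $i\neq j$, $\varphi_i$ restricted to $T\subset\mathrm{supp}\,\psi_i$ is a hat centered at $x_i$, and $\psi_j$ vanishes there unless $T\subset P_j$ too). The honest computation is $\int_\Omega\varphi_i\psi_j=\sum_{T\subset P_i\cap P_j}\int_T\varphi_i\psi_j$, and on $\widehat T$ this equals $|\det A_T|\int_{\widehat T}\widehat\varphi_i\,(\widehat\psi_j\circ F_T^{-1}\circ F_T)$; using $|\det A_T|=\eta_i$ for all $T\subset P_i$, one factors $\eta_i$ out of the sum and is left with $\eta_i\int_{\widehat P_i}\widehat\varphi_i\widehat\psi_j$ — but this only works cleanly when $P_i=P_j$ as reference configurations. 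The cleanest route is therefore to define the $\widehat\psi_i$ on the reference patch so that $\int_{\widehat P_i}\widehat\varphi_i\widehat\psi_i=1$ and to verify $\int_\Omega\varphi_i\psi_j=0$ for $i\neq j$ directly from $\mathrm{supp}$-considerations on the physical mesh: if $x_j\notin\overline{P_i}$ then the supports meet in measure zero; if $x_j\in\overline{P_i}$ (adjacent vertex), one builds $\widehat\psi_i$ to additionally vanish to first order appropriately, or—simpler—one first produces the non-orthogonal family and then does one step of a (block-diagonal-dominant) Gram–Schmidt within each star, which is the standard device.

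Finally I would derive the gradient estimate~\eqref{eq:psi_estimate}. By the chain rule $\nabla(\psi_i|_T)=(A_T^{-T})\,(\widehat\nabla\widehat\psi_i)\circ F_T^{-1}$, so by the change-of-variables formula $\|\nabla\psi_i\|_{L^q(T)}^q = |\det A_T|\,\|A_T^{-T}(\widehat\nabla\widehat\psi_i)\|_{L^q(\widehat T)}^q \le \|A_T^{-1}\|^q\,|\det A_T|\,\|\widehat\nabla\widehat\psi_i\|_{L^q(\widehat T)}^q$. Using the standard bound $\|A_T^{-1}\|\lesssim \widehat h/\rho_T$ (as in Appendix~A, via~\cite[Lemma~1.100]{ern2013theory}), the mesh-independent bound on $\|\widehat\nabla\widehat\psi_i\|_{L^q(\widehat P_i)}$, and $|\det A_T|=\eta_i$, I get $\|\nabla\psi_i\|_{L^q(T)}\lesssim \eta_i^{1/q}/\rho_T$; summing the finitely many elements $T\subset P_i$ (their number is bounded by shape-regularity) and bounding each $\rho_T$ below by $\min_{T\subset P_i}\rho_T$ gives~\eqref{eq:psi_estimate}. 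The main obstacle I anticipate is the one flagged above: getting a clean \emph{exact} bi-orthogonality $\int_\Omega\varphi_i\psi_j=\eta_i\delta_{ij}$ rather than merely a block-diagonal-dominant Gram matrix — handling the overlapping-but-distinct patches $P_i\cap P_j\neq\emptyset$ for adjacent vertices $i\neq j$ is where care is needed, and the reference-patch/affine-scaling hypothesis~\eqref{eq:scaling} in Assumption~\ref{ass:mesh} is precisely what makes the $\eta_i$ factor come out uniformly so that the local Gram–Schmidt (or a direct construction of $\widehat\psi_i$ orthogonal to neighbouring $\widehat\varphi_j$) stays mesh-independent.
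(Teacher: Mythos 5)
Your overall framework (reference patch, affine transport, the scaling constant $\eta_i$ factoring out via \eqref{eq:scaling}, and the gradient bound through $\|A_T^{-1}\|\lesssim 1/\rho_T$ and $|\det A_T|=\eta_i$) matches the paper's proof, and that part of your argument is sound. But the heart of the lemma is precisely the point you flag at the end and leave unresolved: obtaining \emph{exact} orthogonality $\int_\Omega\varphi_j\psi_i=0$ for adjacent vertices $j\neq i$, with a $\psi_i$ that is still supported in $P_i$ and uniformly bounded. Your two suggested fixes do not close this gap. A ``one step of Gram--Schmidt within each star'' is not the standard device here: correcting $\widetilde\psi_i$ against the adjacent $\varphi_j$ by subtracting multiples of the neighbouring $\widetilde\psi_j$ enlarges the support beyond $P_i$ and creates new nonzero pairings with further hat functions, so exact bi-orthogonality forces inversion of the globally coupled Gram matrix; the resulting dual functions are no longer locally supported, which breaks both the local estimate \eqref{eq:psi_estimate} (whose right-hand side involves only $\eta_i$ and $\min_{T\in P_i}\rho_T$) and the later use in the Fortin operator of Proposition~\ref{prop:compatible2}, where the sum $\sum_i\alpha_i\nabla\psi_i$ is controlled element by element using $\operatorname{supp}\psi_i\subset P_i$. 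Likewise, ``build $\widehat\psi_i$ to additionally vanish to first order appropriately'' is not a construction, and your candidate bump (a sum of interior bubbles) pairs nontrivially with the adjacent $\widehat\varphi_j$.

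The paper resolves this with an explicit local construction that you would need to supply (or replace by an equally concrete local solvability argument with uniform bounds). On each element $\widehat T_j$ of the reference patch, parametrized by $(s,t)\mapsto x_0+s(x_j-x_0)+t(x_{j+1}-x_j)$, one takes $\widehat\psi_i=\kappa\,(s-1)(5s-3)$, a quadratic depending only on the radial coordinate $s$. This vanishes on the exterior edges ($s=1$), is automatically continuous across the interior edges, and is orthogonal, element by element, to every exterior-vertex hat function because both pairings reduce to $\int_0^1 s^2(s-1)(5s-3)\,ds=0$; the constant $\kappa$ is then fixed so that $\int_{\widehat P_i}\widehat\varphi_0\widehat\psi_i=1$. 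With this in hand, your transport and scaling computations give \eqref{eq:orthogonality} exactly (the case $j^*=0$ occurring only for $j=i$), and your chain-rule argument gives \eqref{eq:psi_estimate}. Without such a construction, the bi-orthogonality claim --- the actual content of the lemma --- remains unproven in your proposal.
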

\begin{proof}
By Assumption~\ref{ass:mesh}, the patch $P_i$ supporting the nodal basis function $\varphi_i$  has reference patch $\widehat P_i$, having the same configuration of elements of $P_i$. 
%,
Hence, the patch $\widehat P_i$ has a single interior vertex denoted by $x_0$. Each other vertex of this patch is linked to $x_0$ through a unique interior edge. Let $n_e$ be the number of exterior vertices of the patch $\widehat P_i$ (equivalently, the set of interior edges of the patch $\widehat P_i$). The local $\mathbb P_1$ trial basis functions for this reference patch will consist in $n_e+1$ shape functions $\{\widehat\varphi_j\}_{j=0}^{n_e}$, where $\widehat\varphi_0$ denotes the trial function associated with the interior vertex $x_0$. We want to construct a $\mathbb P_2$ test function $\widehat\psi_i$, supported on the patch $\widehat P_i$, such that:
\begin{equation}\label{eq:delta_ij}
\int_{\widehat P_i}\widehat\varphi_j\widehat\psi_i=\delta_{0j}, \quad\forall j=0,1,..,n_e.
\end{equation}
We provide a two-dimensional procedure to construct such a $\widehat\psi_i$, which can be easily extended to three dimensions. Let us denote by $\{\widehat T_1,...,\widehat T_{n_e}\}$ the set of simplicial elements that make up the patch $\widehat P_i$, and let $\{x_1,...,x_{n_e}\}$ be the set of exterior vertices of the patch $\widehat P_i$, enumerated so that $\widehat T_1=\operatorname{conv}(x_0,x_1,x_2)$, $\widehat T_2=\operatorname{conv}(x_0,x_2,x_3)$, $\dots$, $\widehat T_{n_e}=\operatorname{conv}(x_0,x_{n_e},x_1)$. Consider a reference simplex $S:=\{(s,t)\in\mathbb R^2: s\in[0,1], t\in[0,s]\}$ and observe that each element $\widehat T_j$
can be obtained from $S$ through the affine transformation $(s,t)\mapsto x_0 + s(x_j-x_0)+t(x_{j+1}-x_j)$ whose Jacobian is constant (we are using the logical convention $x_{n_e+1}=x_1$). The shape functions $\widehat\varphi_0$, $\widehat\varphi_j$ and $\widehat\varphi_{j+1}$, restricted to the element $\widehat T_j$, are such that:
\begin{alignat}{2}
\widehat\varphi_0\big(x_0 + s(x_j-x_0)+t(x_{j+1}-x_j)\big) & = 1-s,\notag\\
\widehat \varphi_j\big(x_0 + s(x_j-x_0)+t(x_{j+1}-x_j)\big)  & = s-t,\notag\\
\widehat \varphi_{j+1}\big(x_0 + s(x_j-x_0)+t(x_{j+1}-x_j)\big) & =t,\notag
\end{alignat}
for all $(s,t)\in S$. For a given constant $\kappa\neq 0$ (to be determined later), we propose the following construction of $\widehat\psi_i$ restricted to the element $\widehat T_j$:
$$
\widehat\psi_i\Big|_{\widehat T_j}\big(x_0 + s(x_j-x_0)+t(x_{j+1}-x_j)\big) = \kappa (s-1)(5s-3), \quad\forall (s,t)\in E.
$$
Observe that $\widehat\psi_i$ is invariant with respect to the parameter $t$, and that $\widehat\psi_i$ vanishes when restricted to the exterior edge $\operatorname{conv}(x_j,x_{j+1})$, i.e., when $s=1$. Moreover, this construction is continuous across interior edges 
$\operatorname{conv}(x_0,x_j)={\widehat T_j}\cap{\widehat T_{j-1}}$. Indeed,
\begin{alignat}{2}
\widehat\psi_i\Big|_{\widehat T_j}\big(x_0 + s(x_j-x_0)+0(x_{j+1}-x_j)\big) = \kappa (s-1)(5s-3),\tag{since $t=0$}\\
\widehat\psi_i\Big|_{\widehat T_{j-1}}\big(x_0 + s(x_{j-1}-x_0)+s(x_{j}-x_{j-1})\big) = \kappa (s-1)(5s-3).\tag{since $t=s$}
\end{alignat}
Furthermore, $\widehat\psi_i$ is element-wise orthogonal to the shape functions 
$\{\widehat\varphi_1,...,\widehat\varphi_{n_e}\}$. Indeed,
\begin{alignat}{2}
\int_{\widehat T_j}\widehat\varphi_j\widehat \psi_i= & \big|\widehat T_j\big|{\kappa\over2} \int_0^1\int_0^s(s-t)(s-1)(5s-3)dt\,ds= \big|\widehat T_j\big|{\kappa\over4} \int_0^1{s^2}(s-1)(5s-3)ds,\notag\\
\int_{\widehat T_j}\widehat\varphi_{j+1}\widehat \psi_i= & \big|\widehat T_j\big|{\kappa\over2} \int_0^1\int_0^st(s-1)(5s-3)dt\,ds= \big|\widehat T_j\big|{\kappa\over4} \int_0^1{s^2}(s-1)(5s-3)ds,\notag
\end{alignat}
where it is easy to see that the integral on the right-hand-side vanishes. Finally, we observe that the integral $\int_{\widehat T_j}\widehat\varphi_0\widehat\psi_i$ do not vanish. So we can adjust the constant $\kappa\neq 0$ to get $\int_{\widehat P_i}\widehat\varphi_0\widehat\psi_i=1$, as desired.

Now, for each element $T\subset P_i$ define $\psi_i\big|_T=\widehat\psi_i\circ F_T^{-1}$ (see Assumption~\ref{ass:mesh}) and take a nodal trial function $\varphi_j$. If the support of $\varphi_j$ does not intersect $P_i$, then 
$\int_\Omega\varphi_j\psi_i=0$. Otherwise, there must be $j^*\in\{0,1,...,n_e\}$ such that $\varphi_j\big|_T=\widehat\varphi_{j^*}\circ F_T^{-1}$, for all $T\subset P_i$. In that case, using the change-of-variables theorem and equation~\eqref{eq:scaling}, we have:
$$
\int_\Omega\varphi_j\psi_i=\sum_{T\subset P_i}\int_T\varphi_j\psi_i=\sum_{\widehat T\subset \widehat P_i}\int_{\tilde T}
\widehat\varphi_{j^*}\widehat\psi_i {|T|\over|\widehat T|}=\eta_i\int_{\widehat P_i}\widehat\varphi_{j^*}\widehat\psi_i=\eta_i\delta_{0j^*}=\eta_i\delta_{ij},
$$
since the case $j^*=0$ occurs exactly when $j=i$.

To estimate the norm of $\psi_i$,  first observe that for each element $T\subset P_i$:
$$
\|\nabla\psi_i\|_{L^q(T)}\lesssim \|A_T^{-1}\| |\det A_T|^{1\over q} \|\nabla\widetilde\psi_i\|_{L^q(\widehat T)}
\lesssim {\eta_i^{1\over q}\over \displaystyle\min_{T\in P_i}\rho_T}\|\nabla\widetilde\psi_i\|_{L^q(\widehat T)}.
$$
Hence,
$$
\|\nabla\psi_i\|_{L^q(P_i)}^q=\sum_{T\subset P_i}\|\nabla\psi_i\|_{L^q(T)}^q
\lesssim {\eta_i\over \displaystyle\min_{T\in P_i}\rho_T^q}\sum_{\widehat T\subset\widehat P_i}\|\nabla\widetilde\psi_i\|_{L^q(\widehat T)}^q=
{\eta_i\over \displaystyle\min_{T\in P_i}\rho_T^q}\|\nabla\widetilde\psi_i\|_{L^q(\widehat P_i)}^q\,,$$
which leads to estimate~\eqref{eq:psi_estimate}.
\end{proof}
Now, let us prove Proposition~\ref{prop:compatible2}. 
Recall that we are under the hypothesis that $\mathcal T_h=\{T_i\}_{i=1}^n$ corresponds to a family of shape-regular simplicial meshes, and $q>d$.
Let $\Pi_1:V\to \mathbb P_1(\mathcal T_h)$ be the \emph{Lagrange interpolant} operator, satisfying (for any $T_i\in\mathcal T_h$) the local estimation~\eqref{local::intepor}, together with the global boundedness property~\eqref{bound::interpolant}.
Consider the (Fortin) operator $\Pi: V\mapsto V_m$ defined by:
\begin{equation}\label{eq:Pi2}\notag
\Pi v:=\Pi_1 v + \sum_{j=1}^{N_v}\alpha_j \psi_j,
\end{equation}
where $\psi_j\in\mathbb P_2(\mathcal T_h)$ satisfies~\eqref{eq:orthogonality} and $\alpha_j\in\mathbb R$ is chosen such that the following equality holds true:
\begin{equation}\label{Fortin::eq11}
\int_{\Omega} \varphi_i\,\Pi v=\int_\Omega \varphi_i\,v,\qquad \forall v\in V,\quad\forall i=1,...,N_v.
\end{equation}
Solving for $\alpha_i$ we get
\begin{equation}\label{eq:alpha_i}
\alpha_i={1\over \eta_i}\int_\Omega\varphi_i(v-\Pi_1 v).
\end{equation}
Observe that~\eqref{eq:Pi2} implies property~\eqref{Fortin:c3} of the Fortin operator. To prove property~\eqref{constC} we start estimating $\alpha_i$. So let $P_i:=\overline{\text{supp}\,\varphi_i}$ be the patch of simplices supporting the nodal basis function $\varphi_i$. We have:
$$
|\alpha_i|\leq {1\over \eta_i}\sum_{T\in P_i} \|\varphi_i\|_{L^p(T)}\|v-\Pi_1v\|_{L^q(T)}\lesssim
\sum_{T\in P_i}{ |T|^{1\over p}\over\eta_i}h_T \|\nabla v\|_{L^q(T)}\lesssim {h_i\over\eta_i}|P_i|^{1\over p}\|\nabla v\|_{L^q(P_i)},
$$
where $h_i=\max_{T\in P_i}h_T$. 
Let $\tilde T\subset P_i$ such that $\rho_{\tilde T}=\min_{T\in P_i}\rho_T$. By quasi-uniformity of the patches, there is a mesh-independent constant $c>0$, such that $h_i\leq c h_{\tilde T}$. Hence, by shape-regularity we have that $h_i/\rho_{\tilde T}\lesssim h_{\tilde T}/\rho_{\tilde T}$ is uniformly bounded from above.
Next, using the estimate~\eqref{eq:psi_estimate} we get:
$$
|\alpha_i|\|\nabla \psi_i \|_{L^{q}(P_i)}\lesssim {|P_i|^{1\over p}\over\eta_i^{1\over p}}{h_i\over\rho_{\tilde T}}\|\nabla v\|_{L^q(P_i)}\lesssim \left(\sum_{T\subset P_i}{|T|\over\eta_i}\right)^{1\over p}
\|\nabla v\|_{L^q(P_i)}=|\widehat P_i|^{1\over p} \|\nabla v\|_{L^q(P_i)}.
$$
This leads to the estimate:
\begin{equation}
\sum_{i=1}^{N_v}|\alpha_i|^q\|\nabla \psi_i \|^q_{L^{q}(P_i)}%
\lesssim\sum_{i=1}^{N_v} \|\nabla v\|^q_{L^q(P_i)}\leq (d+1)\|\nabla v\|^q_{L^q(\Omega)}.
\end{equation}
Finally, we have:
\begin{align*}
\|\nabla  \Pi v\|^q_{L^q(\Omega)} &= \sum_{T\in\mathcal T_h}\Big\|\nabla\Pi_1v+\sum_{\{i/T\subset P_i\}}
\alpha_i\nabla\psi_i\Big\|^q_{L^q(T)}\\
& \leq (d+2)^{q-1}\sum_{T\in\mathcal T_h} \left(  \|\nabla \Pi_1 v \|^q_{L^q(T)} +\sum_{\{i/T\subset P_i\}}|\alpha_i|^q \| \nabla \psi_i \|^q_{L^q(T)}\right)\\% 
&\lesssim  \| \nabla \Pi_1 v\|^q_{L^q(\Omega)}+ \sum_{i=1}^{N_v}|\alpha_i|^q\|\nabla \psi_i \|^q_{L^{q}(P_i)}\lesssim  \|\nabla v\|^q_{L^q(\Omega)}.
\end{align*}

\section*{Acknowledgements}
\addcontentsline{toc}{section}{Acknowledgements}
The authors want to thank Diego Paredes for helping with preliminary numerical experiments. 
The work by IM and FM was done in the framework of Chilean FONDECYT research project~\#1160774. IM and SR have also received funding from the European Union's Horizon 2020 research and innovation programme under the Marie Sklodowska-Curie grant agreement No 777778 (MATHROCKS). 
The research by KvdZ was supported by the Engineering and Physical Sciences Research Council (EPSRC) under grant EP/T005157/1. 

\bibliographystyle{spmpsci}
\bibliography{mybib}

\end{document}